\documentclass{article}

\RequirePackage{amsthm}
\usepackage{lmodern}
\usepackage{graphicx, geometry}
\geometry{a4paper, top=3cm,bottom=3cm,left=3cm,right=3cm}

\usepackage{graphicx}%
\usepackage{multirow}%
\usepackage{amsmath,amssymb,amsfonts}%
\usepackage{amsthm}%
\usepackage{mathrsfs}%
\usepackage[title]{appendix}%
\usepackage{xcolor}%
\usepackage{textcomp}%
\usepackage{manyfoot}%
\usepackage{booktabs}%
\usepackage{comment}%

\usepackage{soul,xcolor}

 \newcommand{\R}{\mathbb{R}}

 \newcommand{\dpiu}{D^{+}}
 \newcommand{\dmen}{D^{-}}

\definecolor{mygreen}{rgb}{0.5, 0.68, 0.178}
 
\newcommand{\KEYWORDS}[1]{
	\noindent\textbf{Keywords:} #1
}
\newcommand{\AMSSUBJ}[1]{
	\noindent\textbf{AMS Subject Classification:} #1
} 

\theoremstyle{thmstyleone}%
\newtheorem{theorem}{Theorem}
\newtheorem{proposition}[theorem]{Proposition}%
\newtheorem{lemma}[theorem]{Lemma}
\newtheorem{corollary}[theorem]{Corollary}

\theoremstyle{thmstyletwo}%
\newtheorem{remark}[theorem]{Remark}%

\theoremstyle{thmstylethree}%
\newtheorem{definition}[theorem]{Definition}%
 
\newtheorem{assumptions}[theorem]{Assumptions} 
\newtheorem{notations}[theorem]{Notations} 
\raggedbottom

\begin{document}

\title{Discrete  reaction-diffusion system with stochastic dynamical boundary conditions: convergence results }
 
\author{Francesca Arceci\thanks{Department of Mathematics "F. Enriques", University of Milano, Via C. Saldini 50, 20133 Milano, Italy, email:\emph{Francesca.Arceci@unimi.it}}, Francesco Carlo De Vecchi\thanks{Department of Mathematics "F. Casorati", University of Pavia, Via Ferrata, 5, 27100 Pavia, Italy, email: \emph{Francescocarlo.Devecchi@unipv.it}}, Daniela Morale\thanks{Department of Mathematics "F. Enriques", University of Milano, Via C. Saldini 50, 20133 Milano, Italy, email:\emph{Daniela.Morale@unimi.it}}\\
and Stefania Ugolini\thanks{Department of Mathematics "F. Enriques", University of Milano, Via C. Saldini 50, 20133 Milano, Italy, email:\emph{Stefania.Ugolini@unimi.it}}}
\date{}





\maketitle
\begin{abstract}A space discrete approximation to a highly nonlinear reaction-diffusion system  endowed with a stochastic dynamical boundary condition is analyzed and the convergence of the  discrete scheme to the solution to  the corresponding continuum random system   is established. A splitting strategy allows us to decompose the random system into a space-discrete heat equation with a stochastic boundary condition, and a nonlinear and nonlocal space-discrete differential system coupled with the first one and with deterministic initial and boundary conditions.  The convergence result is obtained by first establishing  some a priori estimates for both  space-discrete splitted variables and then exploiting  compact embedding theorems for time-space Besov spaces on the positive lattice. The convergence of a fully discrete approximation of the random system is also discussed.
\end{abstract}
\KEYWORDS{random dynamical boundary, semi-discrete scheme, SPDE, convergence, Besov norms, discrete Besov spaces}\\
\AMSSUBJ{60H35, 65M06, 65M12}

\section{Introduction}

In the present paper we study a semi-discrete approximation in $[0,T]\times h\mathbb Z_+$, $T\in \mathbb R_+$ and $h\in \mathbb R_+$, of the following random reaction-advection-parabolic equation strongly coupled with an ordinary differential equation
\begin{equation} \label{eq:deterministic_intro}
	\begin{split}
	\frac{\partial }{\partial t}  (\varphi s)   &=	\nabla  \cdot(\varphi  \nabla s )  -\lambda \varphi s c ,   \quad\; \text{in } (0,T]\times (0,+\infty);\\
	\partial_t c &= -\lambda \varphi s c,   \qquad \qquad\qquad \text{in } (0,T]\times (0,+\infty),
	\end{split}
\end{equation}
with $\lambda\in \mathbb R_+$. 
\begin{equation}
\varphi=\varphi(c)=A+Bc
\end{equation}
According with the literature on the topic, we assume (see, e.g., \cite{2004_ADN},\cite{2025_SPA_MauMorUgo}) that the function $\varphi$ depends linearly upon $c$, i.e.   
\begin{equation}\label{eq:linear_porosity}
	\varphi(c) = A+Bc,
\end{equation}
 with $A>0$, $B\neq 0$. By the scaling $s^\prime =|B|s$, $c'=|B|c/A$, $\varphi^\prime(c^\prime)=A\varphi(c)$, we can restrict ourselves to the case
\begin{equation}
    A=1,\quad B=\pm 1;\label{eq:AB_hp}.
\end{equation}
 We observe that in the particular case of marble sulphation the value $B=-1$ is more appropriate \cite{2025_SPA_MauMorUgo}. In any case, in the present paper, we always assume \eqref{eq:linear_porosity}, \eqref{eq:AB_hp} and that
\begin{equation}
\label{eq:limit_porosity}
 0<\varphi_{min}\le \varphi \le \varphi_{max}<1.\end{equation}
The PDE-ODE system \eqref{eq:deterministic_intro} is coupled with bounded  positive initial conditions $   s(x,0)= s_0(x), \,
c(x,0)= c_0(x)$, for any $x \in (0,\infty)$,   endowed with a   random dynamical boundary condition  for $s$  
\begin{equation}\label{eq:stochastic_boundary_condition}
s(t,0)=\psi_t,  \quad t\in [0,T].
\end{equation}
The process  $\psi=\{\psi_t\}_{t\in[0,T]}$ is defined and adapted to a filtered probability space $(\Omega,\mathcal{F},\{\mathcal{F}_t\}_{t\in [0,T]},P)$ and  we suppose that it is a H\"older-continuous bounded noise. More precisely, given  $\eta\in \mathbb R_+$, it is such that, for any  $   t \in [0,T]$,
\begin{equation}\label{eq:regularity_boundary_process} 
 \psi_t\in [0,\eta], \quad     \psi_t \in C^\beta([0,T]), \quad \forall \,\beta\in (1/4,1/2).
\end{equation} 
 A discrete version of the process $(s,c)$ on the positive space lattice $h\mathbb Z_+$ is derived by taking into account both forward and backward finite difference derivatives.
 We adopt a suitable splitting technique that allows  us to face the irregularity of the dynamical boundary condition, by considering $s=u+v$, as in \cite{2024AMU_numerico,2025_SPA_MauMorUgo}. The study is then divided into two different subproblems in $[0,T]\times h\mathbb Z_+$. The first is a space-discrete heat equation for $u$ with zero initial condition and coupled with the stochastic process $\psi$ at the boundary, so that we can take advantage of the regularization effect of the discrete heat equation to compensate for the irregularity of the stochastic boundary path. The second problem is a space-discrete  nonlinear and nonlocal  system for  $v$, whose evolution also depends upon the given $u$ and its derivative, but endowed with a deterministic initial condition and a zero boundary condition. \\ 

The deterministic version of  system \eqref{eq:deterministic_intro} with deterministic, often constant, boundary conditions, has been introduced in \cite{2007_AFNT_TPM,2004_ADN}. It models the degradation of calcium carbonate stone due to pollutant agents, in particular the sulphur dioxide, a very important issue in  Cultural Heritage. 
In system \eqref{eq:deterministic_intro} the function  $s$ stands for the porous concentration of sulphur dioxide that reacts with the calcite $c$  with a rate $\lambda\in \mathbb R_+$ and diffuses in a material with  porosity $\varphi=\varphi(c)$ given by \eqref{eq:linear_porosity}.  As mentioned, porosity,  the fraction of void volume, is governed by a linear dependence on calcite with coefficients $A$ and $B$. In particular, if we consider the marble sulphation phenomena, then the coefficient $B$ should be negative since the calcium carbonate at the beginning of the reaction is transformed into gypsum which has a lower porosity. \\

Model \eqref{eq:deterministic_intro} has been the starting point of a widespread literature, both analytical and numerical,  \cite{2007_AFNT_TPM,2019_BCFGN_CPAA,2023_Bonetti_natalini_NLA,2005_GN_NLA}, where it has been studied on the half-line $\mathbb R_+$ and the boundary conditions are either constant or a given pre-selected bounded measurable positive function   \cite{2004_ADN,2005_GN_NLA}. 
While the mathematical modelling of marble sulphation has primarily been studied from a continuum macroscopic perspective, in the last couple of years to capture the intrinsic randomness of pollutant distribution and environmental fluctuations, recent studies have introduced stochastic elements into the modelling framework. 
At the microscopic level, stochastic interacting particle systems have been proposed in \cite{2025_JMMRU_Arxiv,2024_MACH2023_particles}, while in \cite{2024_DMLTSU_Arxiv} a fully probabilistic interpretation of the deterministic system \eqref{eq:deterministic_intro}   has been provided, relying on an interacting particle system of McKean-Vlasov type.\\

In this work, we study the model proposed in \cite{2023_Arceci_Giordano_Maurelli_Morale_Ugolini,2024_MACH2023_PDE,2025_SPA_MauMorUgo}, in which random noise has been introduced at the macroscale: with the aim to capture the inherent variability of the pollutants in the atmosphere, a random dynamical boundary condition has been coupled to system \eqref{eq:deterministic_intro}. Indeed, a Pearson process at the left boundary $x=0$ is considered, a solution to a Brownian motion-driven stochastic differential equation (SDE) with a mean reverting drift and a squared diffusion coefficient, which is a second-order polynomial of the state. More precisely, the process $\psi$ in \eqref{eq:stochastic_boundary_condition} is chosen as  the unique bounded solution of the SDE:
\begin{equation}\label{eq:Pearson_SDE_boundary}
	d\psi_t = \nu_1(\gamma-\psi_t) dt +\nu_2 \sqrt{\psi_t\left(\eta-\psi_t\right)}dW_t,
\end{equation}
with $\nu_1,\nu_2,\gamma,\eta\in \mathbb R_+$, and $\gamma\le \eta$ and $W\equiv\{W_t\}_{t\in [0,T]}$  a standard Wiener process. The solution of \eqref{eq:Pearson_SDE_boundary} is an example of the random boundary process which is included in the present study. Indeed, in \cite{2025_SPA_MauMorUgo} has been stressed how the regularity stated in \eqref{eq:regularity_boundary_process} is satisfied by the process $\psi$ in \eqref{eq:Pearson_SDE_boundary}. The authors also establish the existence and uniqueness of a mild solution of the system \eqref{eq:deterministic_intro} with constant initial condition and with a boundary condition given by \eqref{eq:Pearson_SDE_boundary}.  A splitting strategy, which we propose again in this paper, has been employed to overcome the difficulties arising from the low regularity at the boundary. More precisely, results in \cite{2025_SPA_MauMorUgo} are achieved under the hypothesis that   $\psi$  in \eqref{eq:stochastic_boundary_condition} has the low regularity \eqref{eq:regularity_boundary_process}. This is also the case in the present paper. Indeed, the results we discuss here hold for the general class of processes with $\beta$- H\"older path continuity with $\beta\in (1/4,1/2)$.
This process class is quite large, including, for example, the fractional Brownian motion with Hurst index $H>{1}/{4}$, or the solution to rough path equations with driving noise as in \eqref{eq:regularity_boundary_process}.
A numerical analysis of the system \eqref{eq:deterministic_intro} coupled with \eqref{eq:Pearson_SDE_boundary}
is provided in \cite{2024AMU_numerico}: a fully discrete forward time centered space approximation is considered coupled with a stochastic path as  boundary condition. The latter is obtained via a Lamperti transformation, to overcome the lack of monotonicity and the problem that the Euler-Maruyama approach does not preserve the bounded domain   \cite{2021_chen_Gan_wang,2023_chen_LSST,1992_Kloeden_Platen,2012_kloeden}. Boundedness and $L^2$ stability results are discussed. With the present work we aim to face the convergence challenge.\\

From an analytical point of view, evolution problems involving stochastic dynamical boundary conditions have been examined particularly focusing on scenarios where the solution exhibits white noise behaviour at the boundary \cite{2015_barbu_bonaccorsi_Tubaro_JMAA,2006_Bon_Zig,2015_Brzezniak_russo,2004_chueshov,1993_daPrato_Zabczyk,2007_debussche_Fuhrman_Tessitore}. Only in \cite{2006_Bon_Zig,2004_chueshov}   PDE boundary conditions are governed by a SDE.  A process driven by Brownian motion appears to be more suitable for Dirichlet boundary conditions, as we consider here. From a mathematical perspective, the regularity of the noise we assume is better than that of the white noise. On the other hand, such an irregular boundary condition is incompatible with the rest of the equation, since it is strongly nonlinear and contains nonlinear terms involving the derivatives of the solution. This is different from the standard literature on SPDEs with random boundary conditions, where the equation contains only local nonlinearities. Furthermore, the situation considered here is significantly more irregular than the boundary conditions in the existing  sulphation deterministic literature \cite{2019_BCFGN_CPAA,2005_GN_NLA,2007_GN_CPDE,2023_Bonetti_natalini_NLA}.\\

As the numerical studies regard, to the best of our knowledge, only a few works deal with the numerical approximation by adopting finite differences \cite{jovanovic_finite_diff,lions1969methodes} of nonlinear parabolic equations in which the boundary conditions has low regularity and possibly randomness as in the present case. Since in the literature the boundary conditions do not evolve in time, from this perspective, one novelty of the present paper is that we consider a highly nonlinear PDE endowed with a stochastic boundary condition of dynamical type and the second one is that the regularity conditions assumed for such a process are shared by a not small class of stochastic processes. 
We stress that one of the main difficulties in handling the numerical convergence of nonlinear random equations having a highly irregular solution (with respect to the standard literature on the topic) is to individualize a suitable space of convergence for the solutions. We adopt the time-space Besov spaces, which are implicitly considered in the linear framework through the study of convergence of numerical equations in fractional Sobolev spaces (see, e.g., \cite{jovanovic_finite_diff,lions1969methodes}), since these spaces allow us to track more precisely the (fractional) regularity of the solution and to exploit the useful interpolation results. In order to use these instruments and handle the non-linearity of the equation, we take inspiration from the study of singular SPDEs on the lattice, see, e.g., \cite{devecchi_nicolay_2021elliptic,ErhardHairer2019,HairerMatetski2018,HairerSteele2002,gubinelli2018pde,martin2019paracontrolled}. We hope that the present paper can contribute to improving the knowledge of these lattice approximation methods for singular SPDEs, that these techniques can be spread more widely and used in the study of the convergence of numerical methods to irregular solutions to differential equations.  \\

The final goal of the present article is to prove the convergence of the numerical scheme associated with \eqref{eq:deterministic_intro} to the unique mild solution of the random PDE-ODE system \eqref{eq:deterministic_intro}, endowed with the stochastic dynamic boundary condition \eqref{eq:Pearson_SDE_boundary}, whose existence and uniqueness have been established in \cite{2025_SPA_MauMorUgo}.
To be more precise, we first study in detail the random space-discrete heat equation, by deriving the fundamental solution on the positive lattice on $[0,T]\times  h\mathbb{Z}_+$, as an odd reflection of the fundamental solution in the whole $h\mathbb{Z}$ and then the explicit representation of the solution of the initial-boundary problem in terms of the difference derivative of the discrete heat kernel on $h\mathbb{Z}$. We provide some a priori estimates for the random space-discrete heat equation solution in the time-space Besov spaces. In particular, we use a recent result on the discrete heat kernel obtained in \cite{devecchi_nicolay_2021elliptic}, which states that the heat-kernel regularizes a function on the lattice-based Besov spaces $B^s_{p,q}$ by gaining some regularity in space.  As a final achievement, we prove that the $L^\infty_tW^{p,r}_x -$ norm of $u$ is controlled by the $C^\beta-$ norm of $\psi$ with $\beta \in (1/4,1/2)$  (Proposition \ref{proposition:estimate_u}). 
Concerning the process $(v,c)$,   the equation satisfied by $v$ is nonlinear and nonlocal; furthermore, it depends on the given $u$ and its derivative $\partial_x u$. To face this second system, we first introduce a simplification strategy, given by a sort of linearization of the whole system. Following \cite{2007_GN_CPDE,2025_SPA_MauMorUgo}, we study a quasi-linear system in which the ODE depends on a function $f$ with good regularity properties, instead of $s$, so it becomes an independent equation.   
We also prove a useful a priori bound for the $L^2-$ norm of $v$ and its discrete derivative (Proposition \ref{proposition:estimates_v}). Although we  mainly follow the line as  in \cite{2025_SPA_MauMorUgo}, we need to specialize the estimates in the adopted discrete spaces, by taking a different route to estimate the term $v_t^2$ and by considering an interpolation between the space $L^2_tH^1_x$ and $L^\infty_tL^2_x.$  
Given the estimates for the splitting variables $u$ and $v$, we finally obtain an a priori estimate for the whole solution $s$ of the quasi-linear system (Proposition \ref{estimate_for_s}), as well as for the original highly non-linear system (Proposition \ref{remark:fs}).\\

The main convergence result is obtained via piecewise interpolation and compactness properties of discrete time-space Besov spaces. More precisely, we first introduce a simple \emph{extension operator} given by a piecewise constant function associating to any function defined on the lattice $h\mathbb{Z}_+$ a function on the continuum $\R_+$. This operator has good continuity and convergence properties with respect to both the discrete and the continuous Besov spaces used in the present paper, see \cite{devecchi_nicolay_2021elliptic}. Furthermore, we also introduce \emph{discretization operator}, which associates with each regular function on $\R_+$ a value at each lattice point given by the mean value of the function on a square centered on the given lattice point. By considering a weak formulation of our space-discrete system, we first state that a weak solution is also a mild solution (Lemma \ref{weak_versus_mild}). Finally, we prove the convergence of the piecewise constant extension of a solution of the space-discrete random system to the unique solution of the random system \eqref{eq:deterministic_intro} in $L^p([0,T],B^k_{p,p}(\R_+))$, for any $p\in [1,2]$ and for any $k<1/p$. The convergence of a fully discrete scheme to the random continuous system \eqref{eq:deterministic_intro}-\eqref{eq:stochastic_boundary_condition} concludes the analysis.\\

The paper is organized as follows. In Section 1 the well-posedness of the random PDE-ODE system is recalled and its numerical approximation is briefly summarized. The space-discrete setting, including a short introduction to discrete time-space Besov spaces, is presented in Section 3. In Section 4 the space-discrete version of the original random PDE-ODE system is derived and the slitting strategy is proposed. The space-discrete heat equation endowed with a zero initial condition and a random boundary condition is studied in details in Section 5. Some a priori estimates for its solution are established, including the Dirac-delta semigroup regularization via Besov spaces properties and the continuity properties of the solution with respect to the boundary process. A priori estimates for a quasi-linear version of the discrete system are provided in Section 6 as well as the boundness of  both the splitting variables via the Feynman-Kac representation formula. Very useful uniform $L^2$ estimates for both the splitting variables which allow to prove analogous estimates for the original variables are also presented.  Section 7 contains all main convergence results. The convergence of the weak solution of the random space-discrete system to the weak solution of the original random PDE-ODE one is established. Considering a fully discrete scheme, its convergence to the original system in the continuum is also investigated. Finally, the proof of the Feynman-Kac formula for a time continuous process defined on a lattice is sketched in Appendix A.

\section{Random PDE - ODE : well-posedness}

The motivation of the present  work is the recent study of the random PDE-ODE system \eqref{eq:deterministic_intro}-\eqref{eq:stochastic_boundary_condition} in the case  where the stochastic process $\psi$ is a specific example, given by the Pearson process solution of the SDE \eqref{eq:Pearson_SDE_boundary}. In this section, we briefly present the main results about the properties of the process $\psi$ and of the well-posedness of the complete random system, as well as of the specific numerical study \cite{2024AMU_numerico,2025_SPA_MauMorUgo}.     
First of all, we stress that the solution $\psi$ to the SDE in \eqref{eq:Pearson_SDE_boundary} satisfies the regularity properties \eqref{eq:regularity_boundary_process}.

\begin{proposition}[A random bounded boundary process \cite{2025_SPA_MauMorUgo}] \label{prop:wellposedness_psi} Let  $\nu_1,\nu_2,\gamma,\eta\in \mathbb R_+$ with $\gamma\le \eta$, and  $\psi_0 \in [0,\eta]$.  The equation \eqref{eq:Pearson_SDE_boundary} satisfies the following properties. 
\begin{enumerate}
    \item[a.] The equation \eqref{eq:Pearson_SDE_boundary} admits a solution $\psi_t, t\in [0,T]$, which is pathwise unique for any initial condition  $\psi_0\in [0,\eta]$. 
    \item[b.] If the parameters are such that $ \,\, 2\nu_1 \gamma \wedge 2\nu_1(\eta-\gamma) \ge \nu_2^2 \eta,$  then  the process $\psi$ stays bounded, i.e. for any $t\in (0,T],  
	 \psi_t \in (0,\eta)$ and, in particular, $\psi_t\in L^\infty(\mathbb R_+)$. 
 \item[c.] The solution paths are H\"older continuous, i.e. for any $\omega\in \Omega$, the  trajectory $\psi\cdot(\omega) \in C^\beta([0,T])$ with $ \beta \in (0,1/2)$.
 \end{enumerate}
\end{proposition}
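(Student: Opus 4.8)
The plan is to treat \eqref{eq:Pearson_SDE_boundary} as a one–dimensional diffusion whose diffusion coefficient is H\"older of exponent $1/2$ but not Lipschitz at the two endpoints. As a preliminary I would extend the coefficients off $[0,\eta]$ in the standard way, replacing $\sqrt{\psi_t(\eta-\psi_t)}$ by $\sqrt{(\psi_t\vee 0)(\eta-\psi_t)^{+}}$, so that $b(x)=\nu_1(\gamma-x)$ is globally Lipschitz and $\sigma(x)=\nu_2\sqrt{(x\vee 0)(\eta-x)^{+}}$ is bounded, continuous and $1/2$–H\"older on $\R$. For item (a), weak existence for the extended SDE follows from the Skorokhod existence theorem (continuity and linear growth of the coefficients), and pathwise uniqueness follows from the Yamada–Watanabe criterion: the drift is Lipschitz and $|\sigma(x)-\sigma(y)|\le \rho(|x-y|)$ with $\rho(u)=C\sqrt u$, for which $\int_{0+}\rho(u)^{-2}\,du=+\infty$. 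Hence, by the Yamada–Watanabe theorem, there is a pathwise unique strong solution for every initial datum, in particular for $\psi_0\in[0,\eta]$. Invariance of $[0,\eta]$ — which also shows this solves the original, non-extended equation — I would obtain from the one–dimensional comparison theorem, comparing $\psi$ with the constant sub/supersolutions $0$ and $\eta$: these solve the extended equation with the same $\sigma$ (since $\sigma(0)=\sigma(\eta)=0$) once the drift is flattened to zero outside $[0,\eta]$, while near $0$ the true drift satisfies $\nu_1(\gamma-x)\ge 0$ and near $\eta$ it satisfies $\nu_1(\gamma-x)\le 0$; thus $0\le\psi_t\le\eta$ a.s.\ for all $t$.

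For item (b), assuming $\psi_t\in[0,\eta]$, I would determine when the endpoints are inaccessible from the interior via Feller's boundary classification — scale function and speed measure, equivalently Feller's test applied after the passage to natural scale. Near $x=0$ one has $\sigma^2(x)\sim\nu_2^2\eta\,x$ and $b(x)\to\nu_1\gamma$, so the scale density behaves like $x^{-2\nu_1\gamma/(\nu_2^2\eta)}$ and $0$ is unattainable exactly when $2\nu_1\gamma\ge\nu_2^2\eta$; by the substitution $z=\eta-x$ the endpoint $\eta$ is unattainable exactly when $2\nu_1(\eta-\gamma)\ge\nu_2^2\eta$. Under the standing hypothesis $2\nu_1\gamma\wedge 2\nu_1(\eta-\gamma)\ge\nu_2^2\eta$ both endpoints are inaccessible, so a.s.\ $\psi_t\in(0,\eta)$ for every $t\in(0,T]$, and $\psi_t\in L^\infty$ is then immediate (it already follows from invariance of the closed interval). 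I expect this boundary analysis to be the main obstacle, as it is where the precise parameter threshold enters and it requires some care with the behaviour of the scale and speed measures near the two ends.

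For item (c), using $\psi_t\in[0,\eta]$ the coefficients along the path are uniformly bounded, $|b(\psi_r)|\le\nu_1(\gamma+\eta)$ and $|\sigma(\psi_r)|\le\nu_2\eta/2$ (because $\sqrt{x(\eta-x)}\le\eta/2$). Writing, for $s<t$, $\psi_t-\psi_s=\int_s^t b(\psi_r)\,dr+\int_s^t\sigma(\psi_r)\,dW_r$, the drift term is bounded by $\nu_1(\gamma+\eta)(t-s)$ and the Burkholder–Davis–Gundy inequality gives $\mathbb E\big|\int_s^t\sigma(\psi_r)\,dW_r\big|^{2p}\le C_p(\nu_2\eta/2)^{2p}(t-s)^{p}$ for every $p\ge 1$; hence $\mathbb E|\psi_t-\psi_s|^{2p}\le C\,(t-s)^{p}$. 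Kolmogorov's continuity theorem then yields a modification whose paths are $\beta$–H\"older for every $\beta<(p-1)/(2p)$, and letting $p\to\infty$ gives $\psi_\cdot(\omega)\in C^\beta([0,T])$ for every $\beta\in(0,1/2)$ and a.e.\ $\omega$, which is the claim.
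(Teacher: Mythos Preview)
The paper does not actually prove this proposition: it is quoted verbatim from \cite{2025_SPA_MauMorUgo} and no argument is given here, so there is nothing to compare your approach against within this text. That said, your outline is the standard and correct route for Jacobi-type diffusions and matches what one finds in the cited reference: Yamada--Watanabe for pathwise uniqueness (the $1/2$--H\"older diffusion coefficient with Lipschitz drift), Feller's boundary classification via the scale function to obtain the exact threshold $2\nu_1\gamma\wedge 2\nu_1(\eta-\gamma)\ge \nu_2^2\eta$ for inaccessibility of the endpoints, and Kolmogorov's criterion with BDG for the H\"older regularity. One small comment: in part (a) your comparison argument for invariance of $[0,\eta]$ is slightly informal as written (the constant processes $0$ and $\eta$ do not literally solve the same SDE unless you also truncate the drift outside $[0,\eta]$, which you mention but should make precise); a cleaner alternative is to argue directly that the extended solution cannot exit $[0,\eta]$ because $\sigma$ vanishes at the endpoints while the drift points inward there, or simply to invoke Feller's test already at this stage.
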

In order to give a proper definition of the solution $(s,c)$ of  the  system  \eqref{eq:deterministic_intro} with the linear model for the porosity  \eqref{eq:linear_porosity}, it is convenient to consider the following form, that makes evident the high non-linearity of $s$  
\begin{eqnarray} 
    \partial_t    s   &=&	\partial_x^2 s + \tilde{b}_c(t,x) \partial _x s + \gamma_c(t,x) s(Bs- 1),  \label{eq:s2}\\
    \partial_t c &=& - \lambda s (A+B c) c.  \label{eq:c2}
\end{eqnarray}	
Functions $\tilde{b}, \gamma_c $ show the dependence of $s$ on the field $c$ and  are defined as follows \begin{equation} \label{eq:b_c_gamma_c}
    \tilde{b}_c(t,x)=\frac{B}{(A + Bc)} \partial_x c, \qquad 
    \gamma_c(t,x) = \lambda c.  
\end{equation} 
In \cite{2025_SPA_MauMorUgo} the well-posedness of the system \eqref{eq:s2}-\eqref{eq:c2} with the boundary equation \eqref{eq:stochastic_boundary_condition} is proven with respect to the following definition of mild solution to the system.  
\begin{definition}[Bounded positive mild solution]\label{def:mild_solution_s_nonlin}
	A bivariate process $(s,c)$, with $s\in L^\infty([0,T],W^{1,2}(\mathbb R_+))\cap L^\infty\left([0,T]\times \mathbb R_+\right)$ and $c\in B_{b}\left([0,T]\times \mathbb R_+\right)$, the space of bounded Borel functions, is \emph{a bounded positive mild solution} of the nonlinear PDE-ODE system \eqref{eq:s2}-\eqref{eq:c2}  if the following conditions hold.
    \begin{itemize}
        \item[1.] For any $x \in \mathbb R_+$, the function $c(\cdot,x)$ solves the ODE \eqref{eq:c2}, i.e. for any initial bounded function $c_0$ and any $t\in [0,T]$ it is  explicitly given by 
	\begin{align}\label{eq:df_mild_solution_c}
	c(t,x) = \frac{Ac_0(x)}{\varphi(c_0(x))e^{\lambda \int_0^t As(\tau,x)d\tau} -Bc_0(x)}.
	\end{align}
  
        \item[2.] For  any $(t,x)\in [0,T]\times \mathbb R_+,$  the function $s$ is such that  $ s(t,x) \in (0,\eta).$ 
	
        \item[3.] The function $s\in L^\infty([0,T],W^{1,2}(\mathbb R_+))$ is a mild solution of \eqref{eq:s2}, that is
	\begin{equation*} 
	\begin{split}
		s(t,\cdot) &= -2\int^t_0\partial_xG(t-\tau,\cdot) \psi(\tau)d\tau +  {G}(t,\cdot)*_D s_0\\
		&\quad +\int_0^t  {G}(t-\tau,\cdot)*_D\left(\tilde{b}_c(\tau,\cdot)\partial_x s(\tau,\cdot)+\gamma_c(\tau,\cdot) s(\tau,\cdot)\right)(Bs(\tau,\cdot)-1)) ds,
	\end{split}
    \end{equation*}
	 where   $\tilde{b}_c$ and $\gamma_c$ are defined in   \eqref{eq:b_c_gamma_c}, $G$ is  the heat kernel     
and $$  G(t,\cdot)*_Df(x)=\int_0^{+\infty} (G(t,x-y)-G(t,x+y))f(y)dy.$$ 
    \end{itemize}
\end{definition} 
The global existence of a pathwise unique mild solution for the nonlinear equation \eqref{eq:s2}-\eqref{eq:c2}  is proved in \cite{2025_SPA_MauMorUgo} under some regularity hypotheses on the initial condition and by tightening the range of the H\"older coefficient of the boundary condition, as follows.
\begin{theorem}[Well-posed continuum random system \cite{2025_SPA_MauMorUgo}]\label{teo:wellposedness_continuuum_system}
Let us consider the system \eqref{eq:s2}-\eqref{eq:c2}  on $[0,T]\times \mathbb R_+$. Suppose that the hypotheses of  Proposition \ref{prop:wellposedness_psi}  are satisfied with $\eta <1$ if $B=1$ and that the solution $\psi$ of  equation \eqref{eq:Pearson_SDE_boundary} is such that
$$
    \psi\in C^\beta,\, \beta \in (1/4,1/2),\quad    \psi(0)=0, \quad    0\le \psi_t\le \eta.
$$
Furthermore, suppose that the initial conditions have the following regularity
$$
    0\leq s_0(x) \leq \eta, \quad s_0 \in W_x^{1,2}, s_0(0)=0, \qquad  0<m\leq c_0(x) \leq C_0, \quad
     C_0- c_0\in  W_x^{1,2}.
$$
Then,    system \eqref{eq:s2}-\eqref{eq:c2} admits   a pathwise unique mild solution $(s,c)$, according with Definition \ref{def:mild_solution_s_nonlin}.
\end{theorem}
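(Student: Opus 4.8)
The plan is to work pathwise — fixing $\omega\in\Omega$ so that the only input from the SDE \eqref{eq:Pearson_SDE_boundary} is the regularity \eqref{eq:regularity_boundary_process} of $\psi$ — and to combine the splitting $s=u+v$ with a quasi-linearization of the ODE coupling and a Banach fixed-point argument. First I would dispose of the boundary irregularity: let $u$ be the mild solution of the heat equation on $\mathbb R_+$ with zero initial datum and Dirichlet datum $\psi$, i.e.
\[
u(t,\cdot) = -2\int_0^t \partial_x G(t-\tau,\cdot)\,\psi(\tau)\,d\tau .
\]
Although $\psi$ is only $C^\beta$, one exploits the smoothing of the heat semigroup together with $\psi(0)=0$: rewriting $\psi(\tau)=\big(\psi(\tau)-\psi(t)\big)+\psi(t)$ and integrating by parts in $\tau$ against the singular kernel $\partial_xG$, the $\beta$-Hölder bound on $\psi$ beats the time singularity precisely when $\beta>1/4$, which is what forces the restricted range of $\beta$. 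This yields $u\in L^\infty([0,T],W^{1,2}(\mathbb R_+))\cap L^\infty([0,T]\times\mathbb R_+)$ with quantitative bounds on $\|u\|$ and $\|\partial_x u\|$ in terms of $\|\psi\|_{C^\beta}$ and $\eta$. This is where most of the linear analysis lives.

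Then $v:=s-u$ must solve, formally,
\[
\partial_t v = \partial_x^2 v + \tilde b_c\,\partial_x(u+v) + \gamma_c\,(u+v)\big(B(u+v)-1\big),\qquad v(0,\cdot)=s_0,\ \ v(t,0)=0,
\]
coupled with the explicit representation \eqref{eq:df_mild_solution_c} for $c$ in terms of $s=u+v$. The genuinely nonlinear and nonlocal difficulty is that $\tilde b_c=\tfrac{B}{A+Bc}\partial_x c$ contains $\partial_x c$, and differentiating \eqref{eq:df_mild_solution_c} shows $\partial_x c$ depends on $\partial_x\!\int_0^t s\,d\tau$, hence on $\partial_x v$ again: the $v$-equation is quasi-linear with a drift whose coefficient depends on the unknown's derivative. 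To decouple this I would follow the quasi-linearization of \cite{2007_GN_CPDE}: freeze the argument of the ODE, replacing $s$ inside \eqref{eq:df_mild_solution_c} (hence inside $\tilde b_c,\gamma_c$) by a given function $f$ in a suitable ball of $L^\infty([0,T],W^{1,2}(\mathbb R_+))\cap L^\infty$. Then $c=c[f]$ and the coefficients become data, the $v$-equation becomes genuinely semilinear parabolic, and Picard iteration against the Dirichlet heat kernel $G(t,\cdot)*_D$ produces a solution map $f\mapsto s[f]:=u+v[f]$.

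Next I would show $f\mapsto s[f]$ has a fixed point. The key a priori estimates are: (i) boundedness $0\le s\le\eta$ and $0<m'\le c\le C_0$, obtained by a comparison / Feynman–Kac representation of $s$ — the reaction term $\gamma_c\,s(Bs-1)$ has the favorable sign at $s=0$ and at $s=\eta$ (using $\eta<1$ when $B=1$, automatic when $B=-1$), while \eqref{eq:df_mild_solution_c} keeps $c$ in $[m',C_0]$ once $s\ge0$; (ii) an energy estimate for $v$ and $\partial_x v$ in $L^\infty_tL^2_x\cap L^2_tH^1_x$, testing the $v$-equation against $v$ and against $-\partial_x^2 v$, using the bounds on $\tilde b_{c[f]},\gamma_{c[f]}$ and on $u,\partial_x u$ and absorbing cross terms by Young's inequality. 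Since these bounds are uniform in $f$ on the ball, the ball is invariant. Contraction then follows by subtracting the equations for $f_1,f_2$: $s[f_1]-s[f_2]$ solves a linear parabolic equation whose source is controlled by $\|f_1-f_2\|$ times the uniform bounds, so a Gronwall argument on a short time interval (then iterated over $[0,T]$) gives a Lipschitz constant $<1$. The hypotheses $s_0\in W^{1,2}_x$, $s_0(0)=0$ and $C_0-c_0\in W^{1,2}_x$ are exactly what is needed for the $H^1$-energy estimate to close and for $\partial_x c$ to be controlled.

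Finally, uniqueness of the mild solution in the sense of Definition \ref{def:mild_solution_s_nonlin} follows from the same difference-and-Gronwall estimate applied directly to two mild solutions $(s_1,c_1),(s_2,c_2)$, without the quasi-linearization, using that any such solution is bounded and lies in $L^\infty_tW^{1,2}_x$. I expect the main obstacle to be the interlocking of the derivative estimates: controlling $\partial_x v$ needs control of $\tilde b_c$, which needs control of $\partial_x c$, which through \eqref{eq:df_mild_solution_c} needs control of $\partial_x\!\int_0^t s$, i.e. of $\partial_x v$ once more. The $H^1$-energy estimate in step (ii) and the choice of the invariant ball must be arranged so that this loop closes, and this is precisely where the smallness of the time step — or an appropriately weighted norm — is used.
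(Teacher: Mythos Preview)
Your proposal is correct and follows essentially the same approach that the paper describes (the theorem is quoted from \cite{2025_SPA_MauMorUgo} and not re-proved here; the paper only sketches the strategy in the paragraph following the statement). The splitting $s=u+v$ with $u$ absorbing the stochastic boundary via the heat equation, the quasi-linearization of the ODE by freezing $f$ in \eqref{eq:df_mild_solution_c} in the spirit of \cite{2007_GN_CPDE,2005_GN_NLA}, and the subsequent fixed-point/energy argument are exactly the ingredients indicated.
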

 The proof of Theorem \ref{teo:wellposedness_continuuum_system} employs a splitting strategy, where the solution 
$(s,c)$ to the system \eqref{eq:s2}-\eqref{eq:c2} is expressed as 
$(u+v,c)$, where $u$ represents the solution to the heat equation coupled with the stochastic boundary condition \eqref{eq:Pearson_SDE_boundary} and does not depend on the less regular function $c$. Differently, 
$(v,c)$ is the solution to a nonlinear system that depends functionally on 
$u$, but is characterized by a deterministic constant boundary condition. This approach enables us to offset the irregularity of the stochastic boundary path with the regularity of $u$, arising from the heat equation.

\smallskip

The same splitting strategy is adopted in the numerical discretization of the random model for any realization of the dynamical boundary condition $\psi_t$. The numerical scheme of both the subsystems is obtained via forward in time and centered in space (FTCS) finite difference approximation \cite{2024AMU_numerico}.  The time-space discrete heat equation is coupled with a Lamperti sloping smoothing truncation approximation for boundary dynamics \eqref{eq:Pearson_SDE_boundary}, in order to  face the difficulty of a non-Lipschitz diffusion coefficient and to guarantee  boundedness and  monotonicity \cite{2024AMU_numerico}. 
Given a time-space discretization with fixed time and space meshes,  respectively, the boundedness of the discrete solution and stability of the scheme are established under suitable  conditions.
The interested reader may refer to \cite{2024AMU_numerico} for details.

\section{A space-discrete setting}\label{sec:space-discrete_setting}
To provide a space finite difference approximation of \eqref{eq:deterministic_intro}	 - \eqref{eq:stochastic_boundary_condition}, let us introduce the discrete spaces we will consider in the paper. 
\begin{definition}[Discrete space]\label{def:discrete_space_norms}
  Let $h >0$. Let us define the following  lattice spaces
$$
\Omega_h=h\mathbb{Z}, \quad \Omega_h^+ = h\mathbb{Z}_+, \quad \Omega_{h,0}^+ = \Omega_h^+\cup \{0\}, \quad \Omega_h^- = h\mathbb{Z}_- 
$$ 
that is  if $z\in \Omega_h$, then $z=n h,$ for some $n \in \mathbb{Z}$. 
 For any summable function $f: \Omega_h\to\R$   we often use the following integral notation
$$
    \int_{\Omega_h}f(z)dz = h\sum_{z\in \Omega_h} f(z),
$$
while as usual the space $L^p(\Omega_h)$,   $p\in [1,+\infty]$, is    endowed with the discrete norm \begin{equation}\label{eq:grid_norm_space}
    \| f\|_{L^p(\Omega_h)} : = \left(h \sum_{z\in \Omega_h}|f(z)|^p\right)^{\frac{1}{p}}.
\end{equation} The supremum norm is as usual denoted by $
    \|f\|_{L^\infty(\Omega_h)}$ 
\end{definition}
Let us introduce the discrete Besov space,   a discrete version of the classical   Besov space \cite{Triebel1983}. The interest reader may refer to  \cite{devecchi_nicolay_2021elliptic}.
\begin{definition}[Discrete Fourier transform]
   Let $\mathcal{S}(\Omega_h)$ be the Schwartz test function space defined on $\Omega_h$, meaning the sequence decreasing at infinity less than any polynomial, and by $\mathcal{S}^\prime(\Omega_h)$ its topological dual space, i.e. the set of functions defined on $\Omega_h$ with at most polynomial growth. The Fourier
transform $\mathcal{F}_{h} : \mathcal{S} (\Omega_h)
\rightarrow C^{\infty} \left( \mathbb{T}_{h} \right)$,
where $\mathbb{T}_{h} = \left[ -
 {\pi}/{h},  {\pi}/{h} \right]$, is defined, for any $f\in \mathcal{S} (\Omega_h)$ and $  x \in \mathbb{T}_{h}$ as
\begin{equation} \mathcal{F}_{h} (f) (x) := \hat{f} (x) = \int_{\Omega_h} e^{i z \cdot x} f (z) d z.\label{eq:discreteFourier} \end{equation}
 Then the inverse Fourier transform  $\mathcal{F}^{- 1}_h :
C^{\infty} \left( \mathbb{T}_{h} \right) \rightarrow
\mathcal{S} (\Omega_h)$ is, for any $g\in C^{\infty} \left( \mathbb{T}_{h} \right) $ and $z\in \Omega_h$ 
\begin{equation}\label{eq:inverse_fourier_torous}
 \mathcal{F}^{- 1}_h (g) (z) := \frac{1}{2 \pi}
   \int_{\mathbb{T}_{h}} e^{- i z \cdot x} g (x)
   d x. \end{equation}
\end{definition}
\begin{remark}
    We can extend the Fourier transform
$\mathcal{F}_{h}$, and the inverse Fourier transform
$\mathcal{F}_h^{- 1}$ from the space $\mathcal{S}' (\Omega_h)$ into $\mathcal{D}' \left( \mathbb{T}_{h}
\right)$ (where $\mathcal{D}' \left( \mathbb{T}_{h}
\right)$ is the space of distributions, i.e. the topological dual of
$C^{\infty} \left( \mathbb{T}_{h} \right)$), and from
$\mathcal{D}' \left( \mathbb{T}_{h} \right)$ into
$\mathcal{S}' (\Omega_h),$ respectively.
\end{remark}
\begin{definition}[Partition of the unity and Littlewood-Paley operators] Let $\{\varphi_j\}_{j\geq -1}$ be a dyadic partition of unity of $\mathbb{R}$ \cite{bahouri2011fourier}. 
We introduce a suitable partition of unity $(\varphi_j^h)_{-1\le j\le J_h}$ for the one-dimensional torus $\mathbb{T}_h$ as follows:   for any   $x\in \mathbb{T}_h$ 
\begin{equation*}
    \varphi_j^h(x) = \begin{cases}
        \varphi_j(x) & \text{ if } j<J_h\\
        1-\sum_{j=-1}^{J_h-1}\varphi_j(x) & \text{ if } j= J_h
    \end{cases}, 
\end{equation*}
where
\begin{equation*}
    J_h = \min\left\{j\ge -1,\, supp(\varphi_j)\not\subset\mathbb{T}_h \right\}.
\end{equation*}
The associated discrete Littlewood-Paley block operators are  the operators defined for any $-1 \leq j \leq J_h$  as $\Delta_j: \mathcal{S}'(\Omega_h) \rightarrow \mathcal{S}'(\Omega_h)$ such that
\[\Delta_j f=\mathcal{F}^{-1}_h(\varphi^h_j\cdot \mathcal{F}_h(f)).\] 
\end{definition}
 \begin{definition}[Discrete Besov spaces \cite{devecchi_nicolay_2021elliptic}]
    For any $p,q\in[1,+\infty],\, \alpha\in \R$, we define  the Besov space $B_{p,q}^\alpha(\Omega_h)$ as the  Banach space, subset of $\mathcal{S}^\prime (\Omega_h),$ such that  $f\in B_{p,q}^\alpha(\Omega_h)$ if  
     \begin{equation}\label{eq:besov_norm}       \left\|f\right\|_{B_{p,q}^\alpha(\Omega_h)} = \left(\sum_{j=-1}^{J_k} \left(2^{j\alpha}\|\Delta_j f\|_{L^p(\Omega_h)}\right)^q\right)^{\frac{1}{q}} < \infty.
    \end{equation}The parameter $\alpha, p$  and $q$ describe the smoothness of the function,   its integrability and   an additional refinement of the regularity scale, respectively.
\end{definition}
\begin{remark} 
   If $p=q=2$, then $B^\alpha_{2, 2}=H^\alpha$, i.e. the traditional fractional Sobolev space, and, whenever $0<\alpha<1$, $B^\alpha_{\infty, \infty}=C^\alpha,$ i.e. the space of $\alpha$-H\"older regular functions. 
\end{remark}

 \begin{proposition}
    \label{prop:Delta_in_Besov}
     Let $k>1$, $p,q\in[ 1,+\infty]$ and  $ \alpha < 1/p-1$. Then,  $\delta_0 \in B^{\alpha}_{p,q}(\Omega_h^+)$. 
\end{proposition}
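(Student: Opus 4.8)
The statement is that the discrete Dirac delta $\delta_0$ at the origin, regarded as an element of $\mathcal{S}'(\Omega_h^+)$, belongs to the discrete Besov space $B^{\alpha}_{p,q}(\Omega_h^+)$ whenever $\alpha < 1/p - 1$. The natural strategy is to estimate the $L^p(\Omega_h)$-norm of each Littlewood-Paley block $\Delta_j \delta_0$ and then sum the weighted series in \eqref{eq:besov_norm}. First I would compute $\Delta_j \delta_0$ explicitly: since $\widehat{\delta_0} \equiv 1$ on $\mathbb{T}_h$, we have $\Delta_j \delta_0 = \mathcal{F}_h^{-1}(\varphi^h_j)$, i.e. the block operator applied to $\delta_0$ is just the inverse Fourier transform of the $j$-th cutoff function. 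So the whole proof reduces to bounding $\|\mathcal{F}_h^{-1}(\varphi^h_j)\|_{L^p(\Omega_h)}$ in terms of $2^j$.

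\textbf{Key steps.} The second step is a Bernstein-type inequality on the lattice. For $-1 \le j < J_h$, the function $\varphi^h_j = \varphi_j$ is supported in a ball of radius $\sim 2^j$, and a standard scaling argument (rescaling the frequency variable, using that $\mathcal{F}_h^{-1}(\varphi_j)(z)$ is a Schwartz-tail sequence with width $\sim 2^{-j}$ and height $\sim 2^j$) gives
\begin{equation*}
\|\mathcal{F}_h^{-1}(\varphi_j)\|_{L^p(\Omega_h)} \lesssim 2^{j} \cdot 2^{-j/p} = 2^{j(1 - 1/p)},
\end{equation*}
uniformly in $h$; the $L^1$ and $L^\infty$ cases are done by direct estimates and the intermediate $p$ by interpolation (or by a direct discrete summation). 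For the top block $j = J_h$ one uses that $\varphi^h_{J_h}$ is a bounded function on $\mathbb{T}_h$ and that $2^{J_h} \sim 1/h$, so the same bound $2^{J_h(1-1/p)}$ holds up to constants. Plugging this into \eqref{eq:besov_norm},
\begin{equation*}
\|\delta_0\|_{B^{\alpha}_{p,q}(\Omega_h)} \lesssim \left( \sum_{j=-1}^{J_h} \left(2^{j\alpha} 2^{j(1-1/p)}\right)^q \right)^{1/q} = \left( \sum_{j=-1}^{J_h} 2^{j q(\alpha + 1 - 1/p)} \right)^{1/q},
\end{equation*}
which is a geometric series with ratio $2^{q(\alpha + 1 - 1/p)} < 1$ precisely when $\alpha < 1/p - 1$; hence the sum is bounded by a constant independent of $J_h$ and therefore of $h$ (with the obvious modification when $q = \infty$, where one just needs $\sup_j 2^{j(\alpha+1-1/p)} < \infty$). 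The final step is the restriction to $\Omega_h^+$: since the odd (or just the restriction) map from $\mathcal{S}'(\Omega_h)$ to $\mathcal{S}'(\Omega_h^+)$ is bounded on the relevant Besov scales — a fact already used in the paper for the half-line heat kernel, cf.\ \cite{devecchi_nicolay_2021elliptic} — the estimate transfers to $B^\alpha_{p,q}(\Omega_h^+)$, and $\delta_0$ supported at the single point $0$ of $\Omega_{h,0}^+$ is handled the same way.

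\textbf{Main obstacle.} The only genuinely technical point is establishing the discrete Bernstein inequality $\|\mathcal{F}_h^{-1}(\varphi^h_j)\|_{L^p(\Omega_h)} \lesssim 2^{j(1-1/p)}$ \emph{uniformly in $h$}, including the careful treatment of the truncated top-frequency block $j = J_h$ where the partition of unity is modified; everything else is summing a geometric series. I expect this to be either cited directly from \cite{devecchi_nicolay_2021elliptic} (where discrete Bernstein estimates are developed) or proved in a couple of lines by the rescaling argument sketched above. The hypothesis $k > 1$ in the statement presumably only enters to guarantee that $J_h \ge -1$ (i.e. that the lattice is fine enough for the partition of unity on $\mathbb{T}_h$ to be nontrivial), so it plays no essential role in the estimate itself.
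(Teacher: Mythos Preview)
Your proposal is correct and follows essentially the same route as the paper: both compute $\Delta_j\delta_0=\mathcal{F}_h^{-1}(\varphi_j^h)$, obtain the Bernstein-type bound $\|\mathcal{F}_h^{-1}(\varphi_j^h)\|_{L^p(\Omega_h)}\lesssim 2^{j(1-1/p)}$ by the heuristic that this function has height $\sim 2^j$ and effective support of width $\sim 2^{-j}$, and then sum the resulting geometric series, which converges exactly when $\alpha<1/p-1$. Your write-up is in fact more careful than the paper's on the top block $j=J_h$, the uniformity in $h$, and the passage to $\Omega_h^+$, and you are right that the hypothesis $k>1$ plays no role in the argument.
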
 \begin{proof}
Since $\mathcal{F}_h(\delta_0)=1$ and, by definition, $\mathcal{F}^{-1}_h(\varphi^h_j  ) $ is about $2^j$ around zero and is supported  on a length $2^{-j}$, so that  the number of discrete points is about $2^{-j}/h$ and 
    \begin{eqnarray*}      \|\Delta_j\delta_0\|^p_{L^p(\Omega_h)} =   \|\mathcal{F}^{-1}_h(\varphi^h_j  )\|^p_{L^p(\Omega_h)} &=&  h \sum_{z\in \Omega_h}\left|\frac{1}{2 \pi}
   \int_{\mathbb{T}_{h}} e^{- i z \cdot x} \varphi^h_j  (x)
   d x.\right|^p\approx  h 2^{j\left(1-\frac{1}{p}\right)}. 
    \end{eqnarray*}
Then, from \eqref{eq:besov_norm}   we get  
\begin{eqnarray*}       \left\|\delta_0\right\|^q_{B_{p,q}^\alpha(\Omega_h)} &=&   \sum_{j=-1}^{J_k} \left(2^{j\alpha}\|\Delta_j\delta_0\|_{L^p(\Omega_h)}\right)^q  \approx  h  \sum_{j=-1}^{J_k} \left(2^{j\left(\alpha+ 1-\frac{1}{p}\right) }\right)^q. 
    \end{eqnarray*}  Then the thesis is achieved when $\alpha<1/p-1.$ \end{proof}

\begin{definition}
 For $p,q,r \in [1,+\infty] $ and $\alpha\in \R $ and for any $t>0$ we consider the space $ L^r\left([0,t],B^\alpha_{p,q}(\Omega_h)\right)$
 with norm given, for any $ f: [0,t] \rightarrow B^\alpha_{p,q}(\Omega_h)$, by
 $$ ||f||_{L^r([0,t],B^\alpha_{p,q}(\Omega_h))}=\left(\int_0^t ||f(\tau)||^r_{B^\alpha_{p,q}(\Omega_h)}d\tau\right)^{1/r}.$$
\end{definition}
\begin{definition}[Continuous time-discrete space Besov space] Let us denote by $\Delta^\tau_j$  the Littlewood-Paley block   with respect to   time.  For $\bar{\alpha}, \alpha\in \R$, let us denote 
$$ B_{r,r}^{\bar{\alpha}} \left([0,t],B^\alpha_{p,q}(\Omega_h) \right)   $$ the space of the functions on $[0,t]\times \Omega_h$ such that 
$$ ||f||_{B_{r,r}^{\bar{\alpha}} ([0,t],B^\alpha_{p,q}(\Omega_h) ) } = \sum_{j\ge -1} \int_0^t \|\Delta^\tau_j f(\tau)\|_{B^\alpha_{p,q}(\Omega_h)} d\tau<\infty.  $$
\end{definition}
The study of immersion properties in the context of the Besov spaces is useful, since we may take advantage of compact embedding properties characterizing such spaces. 
\begin{proposition}[Difference characterization \cite{devecchi_nicolay_2021elliptic,martin2019paracontrolled,2022_Turra}]\label{difference_characterization}
    Let $r,p,q \in [1,+\infty) $ and $\bar{\alpha}, \alpha\in \R$. If $ f\in B_{r,r}^{\bar{\alpha}} \left([0,t],B^\alpha_{p,q}(\Omega_h) \right)$, then
    \begin{equation}\label{eq:difference_characterization_besov}
    \begin{split}
   ||f||_{B_{r,r}^{\bar{\alpha}} ([0,t],B^\alpha_{p,q}(\Omega_h) ) } &\equiv ||f||_{{L^r([0,t],B^\alpha_{p,q}(\Omega_h))} }\\
   &+\int_0^t \int_{|s|<1} \frac{||f(\tau + s,\cdot)-f(\tau,\cdot)||^r_{B^\alpha_{p,q}(\Omega_h) }}{s^{1+r {\bar{\alpha}}}} ds d\tau.
   \end{split}
\end{equation}
\end{proposition}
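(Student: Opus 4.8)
Write $X:=B^{\alpha}_{p,q}(\Omega_h)$. The Littlewood--Paley projectors $\Delta^{\tau}_{j}$ act only on the continuous time variable (with the standard dyadic partition of unity $\{\varphi_j\}_{j\ge -1}$ of $\R$), so \eqref{eq:difference_characterization_besov} is exactly the classical difference (Gagliardo--Slobodeckij) characterization of the $X$-valued time--Besov space $B^{\bar\alpha}_{r,r}([0,t];X)$, in the range $0<\bar\alpha<1$ (the only one used elsewhere in the paper, and the way the cited references state it). A convenient feature of the \emph{first}-order characterization is that, unlike higher-order or Triebel--Lizorkin variants, it does not call on vector-valued Littlewood--Paley square functions, hence requires nothing on the geometry of $X$; in particular the constants depend only on $r$ and $\bar\alpha$ and are automatically uniform in the lattice spacing $h$, which enters the picture only through $\|\cdot\|_{X}$. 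The plan is to prove that the $r$-th powers of the two sides of \eqref{eq:difference_characterization_besov} are comparable, by establishing the two inequalities separately.

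\emph{Step 1 — the right-hand side controls the Besov norm.} The low-frequency block $\Delta^{\tau}_{-1}$ is convolution in time with a fixed $L^1$ kernel, hence $\|\Delta^{\tau}_{-1}f\|_{L^{r}([0,t];X)}\lesssim\|f\|_{L^{r}([0,t];X)}$. For $j\ge 0$ the bump $\varphi_j$ vanishes near the origin and (possibly after a harmless refinement of its support) the product $\xi h$ stays in $(0,2\pi)$ when $|\xi|\sim 2^{j}$ and $|h|\sim 2^{-j}$, so the multiplier $\varphi_j(\xi)/(e^{i\xi h}-1)$ times a nonnegative bump $g_j(h)$ with $\int g_j=1$ supported in $|h|\sim 2^{-j}$ produces kernels $\Phi_{j,h}$ with $\int\|\Phi_{j,h}\|_{L^1}\,dh=O(1)$ and the reconstruction identity $\Delta^{\tau}_{j}f(\tau)=\int_{|h|\sim 2^{-j}}\bigl(\Phi_{j,h}\ast(f(\cdot+h)-f)\bigr)(\tau)\,dh$. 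It follows that $\|\Delta^{\tau}_{j}f\|_{L^{r}([0,t];X)}\lesssim 2^{j}\int_{|h|\sim 2^{-j}}\|f(\cdot+h)-f\|_{L^{r}([0,t];X)}\,dh$, and a H\"older inequality in $h$ turns this into $2^{jr\bar\alpha}\|\Delta^{\tau}_{j}f\|^{r}_{L^{r}([0,t];X)}\lesssim\int_{|h|\sim 2^{-j}}|h|^{-1-r\bar\alpha}\|f(\cdot+h)-f\|^{r}_{L^{r}([0,t];X)}\,dh$. Summing over $j\ge 0$ (the dyadic shells have bounded overlap), adding the $j=-1$ term, and absorbing the tail $1\le|h|<2$ into $\|f\|^{r}_{L^{r}([0,t];X)}$, one obtains $\|f\|^{r}_{B^{\bar\alpha}_{r,r}([0,t];X)}\lesssim\|f\|^{r}_{L^{r}([0,t];X)}+\int_{0}^{t}\int_{|s|<1}|s|^{-1-r\bar\alpha}\|f(\tau+s)-f(\tau)\|^{r}_{X}\,ds\,d\tau$.

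\emph{Step 2 — the Besov norm controls the right-hand side.} Since $\bar\alpha>0$, the embedding $B^{\bar\alpha}_{r,r}([0,t];X)\hookrightarrow L^{r}([0,t];X)$ (e.g. through $B^{0}_{r,1}$) already gives the $L^r$ term. For the double integral, write $f=\sum_{j\ge -1}\Delta^{\tau}_{j}f$ and, for $|h|\sim 2^{-k}$, split $f(\cdot+h)-f$ into the contributions $j\le k$ and $j>k$. The one-dimensional Bernstein inequality in time, $\|\partial_{\tau}\Delta^{\tau}_{j}g\|_{L^{r}}\lesssim 2^{j}\|\Delta^{\tau}_{j}g\|_{L^{r}}$, combined with the fundamental theorem of calculus, yields $\|\Delta^{\tau}_{j}f(\cdot+h)-\Delta^{\tau}_{j}f\|_{L^{r}([0,t];X)}\lesssim\min\{1,2^{j}|h|\}\,\|\Delta^{\tau}_{j}f\|_{L^{r}([0,t];X)}$. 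Bounding the two sub-sums by a H\"older inequality in the index $j$ (costing only geometric, hence summable, factors — this is where $0<\bar\alpha<1$ enters), raising to the power $r$, weighting by $|h|^{-1-r\bar\alpha}$, integrating over $|h|<1$, and exchanging the $j$- and $k$-summations, one arrives at $\int_{0}^{t}\int_{|s|<1}|s|^{-1-r\bar\alpha}\|f(\tau+s)-f(\tau)\|^{r}_{X}\,ds\,d\tau\lesssim\sum_{j\ge -1}2^{jr\bar\alpha}\|\Delta^{\tau}_{j}f\|^{r}_{L^{r}([0,t];X)}=\|f\|^{r}_{B^{\bar\alpha}_{r,r}([0,t];X)}$. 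Together with Step 1 this proves the equivalence.

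\emph{Main obstacle.} Conceptually there is nothing new here — it is a textbook-type characterization — but the bookkeeping must account for the fact that one works on $[0,t]$ and not on $\R$: either one invokes a norm-equivalent extension $B^{\bar\alpha}_{r,r}([0,t];X)\to B^{\bar\alpha}_{r,r}(\R;X)$ (for instance by reflection), or one redoes the calculus estimates directly while tracking the endpoints and restricting the inner $s$-integral to $\{\tau+s\in[0,t]\}$. This is also exactly why \eqref{eq:difference_characterization_besov} carries the separate $\|f\|_{L^{r}([0,t];X)}$ term and truncates at $|s|<1$: the contribution of $|s|\ge 1$ to the full-line difference integral is dominated by $\|f\|^{r}_{L^{r}([0,t];X)}$ alone. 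I expect this finite-interval bookkeeping, rather than any single estimate, to be the most laborious part; the genuine content lies in the block--difference bound $\|\Delta^{\tau}_{j}f(\cdot+h)-\Delta^{\tau}_{j}f\|_{L^{r}}\lesssim\min\{1,2^{j}|h|\}\,\|\Delta^{\tau}_{j}f\|_{L^{r}}$ of Step 2 and in its converse, the averaged-difference reconstruction of Step 1.
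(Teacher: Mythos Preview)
The paper does not prove this proposition at all: it is stated with citations to \cite{devecchi_nicolay_2021elliptic,martin2019paracontrolled,2022_Turra} and invoked as a known tool. Your sketch is the standard Littlewood--Paley argument for the first-order difference characterization of $B^{\bar\alpha}_{r,r}$ with values in a Banach space, and it is essentially correct in the regime $0<\bar\alpha<1$ that you (rightly) single out; the two key ingredients you isolate --- the averaged-difference reconstruction of $\Delta^{\tau}_j$ for $j\ge 0$ and the block--difference bound $\|\Delta^{\tau}_j f(\cdot+h)-\Delta^{\tau}_j f\|_{L^r}\lesssim \min\{1,2^j|h|\}\|\Delta^{\tau}_j f\|_{L^r}$ --- are exactly what the cited references rely on, and your observation that no geometry of $X=B^{\alpha}_{p,q}(\Omega_h)$ is needed (so the constants are uniform in $h$) is the point that matters for how the proposition is applied in Lemma~\ref{lem:forward_difference_dirac_besov}. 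The only caveat is the one you already flag: the statement as written in the paper places no restriction on $\bar\alpha$, whereas the equivalence in the form \eqref{eq:difference_characterization_besov} genuinely requires $0<\bar\alpha<1$ (for $\bar\alpha\ge 1$ one needs higher-order differences, and for $\bar\alpha\le 0$ the Gagliardo integral is not the right object); since the paper only ever uses the result with $\bar\alpha=1-\beta\in(1/2,3/4)$, this is harmless, but it is a sloppiness in the statement rather than in your proof.
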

\begin{proposition}[Compact embedding \cite{devecchi_nicolay_2021elliptic,2022_Turra}]
   Let $\alpha_1,\alpha_2 \in \mathbb R$, $p_i,q_i \in [1,+\infty), i=1,2$ such that the immersion 
    $$B^{\alpha_1}_{p_1,q_1}(\Omega_h)  \subset B^{\alpha_2}_{p_2,q_2}(\Omega_h)  $$
    is compact. Let  $\bar{\alpha},\bar{\alpha}^\prime, r, r^\prime \in \mathbb R$ such that $\bar{\alpha}>\bar{\alpha}^\prime $ and $\bar{\alpha}-\frac{1}{r}>\bar{\alpha}^\prime -\frac{1}{r^\prime}, $  then also the immersion
    $$B_{r,r}^{\bar{\alpha}} \left([0,t],B^{\alpha_1}_{p_1,q_1}(\Omega_h)\right) \subset B_{r^\prime,r^\prime}^{\bar{\alpha}^\prime} \left([0,t],B^{\alpha_2}_{p_2,q_2}(\Omega_h) \right) $$
    is compact.\end{proposition}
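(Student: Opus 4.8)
The plan is to prove sequential compactness directly: given a sequence $\{f_n\}_{n\in\mathbb N}$ bounded in $E:=B_{r,r}^{\bar{\alpha}}\big([0,t],B^{\alpha_1}_{p_1,q_1}(\Omega_h)\big)$, I would extract a subsequence converging in $E':=B_{r',r'}^{\bar{\alpha}'}\big([0,t],B^{\alpha_2}_{p_2,q_2}(\Omega_h)\big)$. I would organise the argument in two stages: \emph{(i)} an Aubin--Lions--Simon type step that produces a subsequence converging strongly in $L^{r'}\big([0,t],B^{\alpha_2}_{p_2,q_2}(\Omega_h)\big)$, using the assumed compactness of the \emph{spatial} embedding $B^{\alpha_1}_{p_1,q_1}(\Omega_h)\hookrightarrow B^{\alpha_2}_{p_2,q_2}(\Omega_h)$ together with the uniform time modulus of continuity encoded in the $E$-norm; and \emph{(ii)} an interpolation step that upgrades this to convergence in $E'$, where the strict gaps $\bar{\alpha}>\bar{\alpha}'$ and $\bar{\alpha}-\tfrac1r>\bar{\alpha}'-\tfrac1{r'}$ provide a spare positive power of the time-increment parameter.

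\textbf{Stage (i).} By the difference characterization of Proposition~\ref{difference_characterization}, the $E$-bound gives that $\{f_n\}$ is bounded in $L^r\big([0,t],B^{\alpha_1}_{p_1,q_1}(\Omega_h)\big)$ with uniformly bounded Gagliardo-type time seminorm of exponent $\bar{\alpha}$; since the $\ell^r$-summation of the time Littlewood--Paley blocks dominates the $\ell^\infty$ one, this yields a uniform-in-$n$ Hölder modulus of continuity in time, $\|f_n(\cdot+s)-f_n\|_{L^r([0,t-|s|],B^{\alpha_1}_{p_1,q_1}(\Omega_h))}\lesssim |s|^{\bar{\alpha}}\to 0$ as $s\to0$. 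Feeding this, together with the hypothesis that $B^{\alpha_1}_{p_1,q_1}(\Omega_h)\hookrightarrow B^{\alpha_2}_{p_2,q_2}(\Omega_h)$ is compact, into Simon's $L^p$-compactness lemma gives relative compactness of $\{f_n\}$ in $L^{r}\big([0,t],B^{\alpha_2}_{p_2,q_2}(\Omega_h)\big)$. When $r'\le r$ one concludes at once on the bounded interval $[0,t]$; when $r'>r$, the scaling gap $\bar{\alpha}-\tfrac1r>\bar{\alpha}'-\tfrac1{r'}$ (and $\bar{\alpha}'\ge 0$, as in all the applications of this proposition) forces the one-dimensional time embedding $B^{\bar{\alpha}}_{r,r}([0,t])\hookrightarrow L^\rho([0,t])$ for some $\rho>r'$, so $\{f_n\}$ is also bounded in $L^\rho\big([0,t],B^{\alpha_2}_{p_2,q_2}(\Omega_h)\big)$ and interpolation between $L^r$ and $L^\rho$ upgrades the compactness to $L^{r'}$. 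Pass to a subsequence (not relabelled) with $f_n\to f$ in $L^{r'}\big([0,t],B^{\alpha_2}_{p_2,q_2}(\Omega_h)\big)$.

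\textbf{Stage (ii) and the main obstacle.} Applying Proposition~\ref{difference_characterization} once more to $g_n:=f_n-f$, it remains to show that the Gagliardo-type time seminorm of $g_n$ with parameters $(r',\bar{\alpha}')$ and values in $B^{\alpha_2}_{p_2,q_2}(\Omega_h)$ tends to $0$. I would split $\int_{|s|<1}=\int_{|s|<\delta}+\int_{\delta\le|s|<1}$: for fixed $\delta$ the outer piece is $\le C(\delta)\,\|g_n\|^{r'}_{L^{r'}([0,t],B^{\alpha_2}_{p_2,q_2}(\Omega_h))}\to 0$ by Stage~(i), while for the inner piece one uses the continuous embedding $B^{\alpha_1}_{p_1,q_1}(\Omega_h)\hookrightarrow B^{\alpha_2}_{p_2,q_2}(\Omega_h)$ and the uniform $E$-bound on $g_n$ (the weak limit $f$ inheriting, by lower semicontinuity, the same seminorm bounds) to dominate, by interpolation against the $L^\rho$ bound, the $(r',\bar{\alpha}'')$ time seminorm in $B^{\alpha_2}_{p_2,q_2}(\Omega_h)$ for some $\bar{\alpha}''>\bar{\alpha}'$; hence this piece is $\le C\,\delta^{\,r'(\bar{\alpha}''-\bar{\alpha}')}$ uniformly in $n$. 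Letting $n\to\infty$ and then $\delta\to0$ gives $g_n\to 0$ in $E'$. The step I expect to require the most care is precisely this last interpolation: one must show quantitatively that the uniform time-regularity measured in the strong space $B^{\alpha_1}_{p_1,q_1}(\Omega_h)$ with exponent $\bar{\alpha}$, plus the extra integrability, controls the time-regularity in the weak space $B^{\alpha_2}_{p_2,q_2}(\Omega_h)$ with a \emph{strictly larger} exponent $\bar{\alpha}''$ --- the vector-valued analogue of the elementary embedding $B^{\bar{\alpha}}_{r,r}\hookrightarrow B^{\bar{\alpha}'}_{r',r'}$ under $\bar{\alpha}-\tfrac1r\ge \bar{\alpha}'-\tfrac1{r'}$, the strict inequality being exactly what turns a priori boundedness into the vanishing tail needed for compactness. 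As an alternative, more structural route, one can work with the dyadic-in-time decomposition $f=\sum_j\Delta_j^\tau f$, showing that the low-frequency truncations $S_N f=\sum_{j\le N}\Delta_j^\tau f$ are compact operators $E\to E'$ (band-limitation in time reduces their compactness to that of $B^{\alpha_1}_{p_1,q_1}(\Omega_h)\hookrightarrow B^{\alpha_2}_{p_2,q_2}(\Omega_h)$) while $\|f-S_Nf\|_{E'}\lesssim 2^{-N\varepsilon_0}\|f\|_E$ with $\varepsilon_0=\min\{\bar{\alpha}-\bar{\alpha}',(\bar{\alpha}-\tfrac1r)-(\bar{\alpha}'-\tfrac1{r'})\}>0$; a uniform operator-norm limit of compact operators being compact then yields the claim.
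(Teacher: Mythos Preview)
The paper does not supply its own proof of this proposition: it is quoted from \cite{devecchi_nicolay_2021elliptic,2022_Turra} and used as a black box, so there is nothing internal to the paper to compare your argument against.

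That said, your outline is sound. The two-stage route --- Simon-type compactness in $L^{r'}\big([0,t],B^{\alpha_2}_{p_2,q_2}(\Omega_h)\big)$ followed by a near/far splitting of the time Gagliardo seminorm to upgrade to $B^{\bar\alpha'}_{r',r'}$ --- is the standard direct argument, and the details you flag as delicate (lower semicontinuity for the limit $f$, the interpolation producing an intermediate exponent $\bar\alpha''\in(\bar\alpha',\bar\alpha)$) are exactly the right ones. One small clarification worth making explicit: the pointwise time modulus $\|f_n(\cdot+s)-f_n\|_{L^r_tX}\lesssim|s|^{\bar\alpha}$ you invoke does hold, because $\ell^r\hookrightarrow\ell^\infty$ gives $B^{\bar\alpha}_{r,r}\hookrightarrow B^{\bar\alpha}_{r,\infty}$ and the latter \emph{is} this difference bound; you should state this link rather than leave it implicit. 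Your caveat that the $L^{r'}$ step uses $\bar\alpha'\ge 0$ is an honest limitation of that write-up; the Littlewood--Paley alternative you sketch at the end (time low-pass truncations $S_N$ are compact $E\to E'$ by reduction to the spatial hypothesis, and $\|I-S_N\|_{\mathcal L(E,E')}\lesssim 2^{-N\varepsilon_0}$ with $\varepsilon_0>0$ coming from the two strict inequalities) is cleaner, handles all signs of $\bar\alpha,\bar\alpha'$ uniformly, and is closer in spirit to how such results are proved in the cited references.
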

For sake of simplicity, when   needed, for the Banach, Sobolev and Besov spaces we introduce the following notations: $L_x^p = L^p(\Omega_h^+),\, W_x^{p,r}=W_x^{p,r}(\Omega_h^+),\, B_{p,q}^\alpha = B_{p,q}^\alpha(\Omega_h^+)$ in space and $L_t L_x^q = L^p([0,T],L^q(\Omega_h^+))$ in time and space.
In the following we restrict to the case $d=1$.

\smallskip

To end this section, let us define the discrete difference operators, counterparts of the differential operators in system \eqref{eq:deterministic_intro}: the forward and backward differences $\dpiu_h, \dmen_h$ and the symmetric Laplacian $\Delta_h$.
  	\begin{definition}[Discrete Laplacian]
 Given a smooth function $f:\Omega_h\rightarrow \R$, let us introduce the shift forward $\tau_h$ and the shift backward $\tau_{-h}$ operators, acting upon a function  $f$, i.e.   $\tau_h (f)  := f(\cdot+h),\, \tau_{-h}(f) :=f(\cdot-h)$. 
 
 Then, the forward and backward divided difference   are defined as
	\begin{equation}\label{eq:finite_derivative}
    \begin{split}
	\dpiu_h f(x) & = \frac{f(x+h)-f(x)}{h}= \frac{(\tau_h -\mathcal{I})f(x)}{h}; \\
	\dmen_h f(x) & = \frac{f(x)-f(x-h)}{h}=\frac{(\mathcal{I}-\tau_{-h})f}{h},
	\end{split}
    \end{equation}
    where $\mathcal{I}f=f.$ 
   We refer to the quantities $\dpiu_h f, \dmen_h f$ in \eqref{eq:finite_derivative} as the \emph{ forward and backward  difference derivatives}.  Finally, we define the (symmetric) \emph{discrete Laplacian }  as
\begin{equation}\label{eq:discreteLaplacian}\Delta_h  f (x)=\frac{1}{2}\left[\dpiu_h \dmen_h+\dmen_h\dpiu_h \right] f(x)=\frac{f(x+h)-2f(x)+f(x-h)}{h^2}.
\end{equation} \end{definition} 
From the definition of the discrete Laplacian \eqref{eq:discreteLaplacian}, we have that $
 \Delta_h^D=\left(\dpiu_h  -\dmen_h\right)/h=\dpiu_h \dmen_h=\dmen_h \dpiu_h$. 

 \begin{remark}[The discrete Laplacian and a uniform homegeneous Markov chain]\label{process_laplacian}

Let us recall the identification of the stochastic process  having the discrete Laplacian as infinitesimal generator.
	Let us introduce the symmetric random walk $\tilde{X}$ as a discrete time Markov chain with transition matrix $ \tilde{P}=(p_{xy})_{xy}$ with $p_{xy}=1/2 $ when $y=x+h $ and $y=x-h$. It is well-know that we can see the transition matrix $\tilde{P}$ as a linear operator on bounded measurable real functions $f$ by:
	\begin{equation}
		\tilde{P} f(x)=\sum_{y}p^{(n)}_{xy}f(x)=E_x[f(X_n)],
	\end{equation}
	and the corresponding infinitesimal generator takes the form
	\begin{equation}
		L=\tilde{P}-I.
	\end{equation}
	Let $N$ be a homegeneous Poisson process on $\R_+ $ with intensity $\lambda>0 $ and assume that $N$ and $\tilde{X} $ are independent. We consider the process $X_t:=\tilde{X}_{N(t)} $, usually called the uniform Markov chain, 
	whose generator assumes the form
	\[
	L=\lambda(\tilde{P}-I).
	\]
	Taking $\lambda=\frac{2}{h^2}, $  the generator of $X_t$ coincides with the discrete Laplacian operator. Indeed
	\[
	\Delta_hf=Lf=\frac{2}{h^2}(\tilde{P}-I)=\frac{2}{h^2}\left[\frac{f(x+h)+f(x-h)}{2}-{f(x)}\right].
	\]
	\end{remark}
  
For convenience of the reader we recall the following (discrete) Leibniz's rule.

\begin{lemma}[Leibniz's rule, \cite{jordan}] \label{lem:Leibniz}
For any $n\ge 0$ the following (discrete) Leibniz's rule  for the discrete forward finite difference of order $n$
$$
    D_h^{+,n} (fg) := \sum_{j=0}^n\binom{n}{j}(D_h^{+,j} f)(D_h^{+,n-j} \tau_h^j g).
$$
\end{lemma}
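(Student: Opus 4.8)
The plan is to argue by induction on the order $n$, with the first-order rule playing the double role of induction base (together with the trivial case $n=0$, which is just $fg=fg$) and of the tool used in the inductive step. For $n=1$ and a lattice point $x\in\Omega_h$, I would start from
$$
D_h^{+}(fg)(x)=\frac{f(x+h)g(x+h)-f(x)g(x)}{h}
$$
and add and subtract $f(x)g(x+h)$ in the numerator; this splits the quotient and gives
$$
D_h^{+}(fg)=(D_h^{+}f)\,(\tau_h g)+f\,(D_h^{+}g),
$$
which is exactly the claimed identity for $n=1$. The characteristic discrete feature here---the shift $\tau_h$ landing on the factor that is \emph{not} differenced---is what will propagate into the power $\tau_h^{j}$ in the general formula, and so it must be tracked carefully.

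For the inductive step, assuming the identity at order $n\ge 1$, I would apply $D_h^{+}$ to both sides, using $D_h^{+,n+1}=D_h^{+}\circ D_h^{+,n}$ and linearity:
$$
D_h^{+,n+1}(fg)=\sum_{j=0}^{n}\binom{n}{j}\,D_h^{+}\!\Bigl((D_h^{+,j}f)\,(D_h^{+,n-j}\tau_h^{j}g)\Bigr).
$$
To each summand I apply the first-order rule just proved, together with the elementary commutation relations $\tau_h D_h^{+}=D_h^{+}\tau_h$ and $\tau_h\tau_h^{j}=\tau_h^{j+1}$, so that $\tau_h D_h^{+,n-j}\tau_h^{j}g=D_h^{+,n-j}\tau_h^{j+1}g$. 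This yields two sums; re-indexing the one carrying an extra forward difference on $f$ by $j\mapsto j+1$ and collecting equal factors $D_h^{+,i}f$, the coefficients recombine via Pascal's rule $\binom{n}{i-1}+\binom{n}{i}=\binom{n+1}{i}$, while the boundary terms $i=0$ and $i=n+1$ match $\binom{n+1}{0}=\binom{n+1}{n+1}=1$ automatically; this gives the formula at order $n+1$ and closes the induction.

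I do not expect a genuine obstacle: the argument is a bookkeeping induction structurally identical to the classical Leibniz rule, and the only delicate point is keeping the shift operators in the right places so that $\tau_h$ always acts on the correct factor and accumulates to the right power $\tau_h^{j}$. A non-inductive proof is also possible by expanding $D_h^{+,n}=h^{-n}(\tau_h-\mathcal{I})^{n}$ and rearranging, but the induction above is shorter and more transparent.
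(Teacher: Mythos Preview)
Your induction argument is correct and complete: the base case $n=1$ is exactly the first-order product rule stated immediately after the lemma in the paper, and the inductive step goes through cleanly once one uses the commutation $\tau_h D_h^{+}=D_h^{+}\tau_h$ and Pascal's identity, as you do. The paper does not actually give a proof of this lemma---it is stated with a citation to \cite{jordan} and no argument---so your proposal supplies what the paper omits; there is nothing to compare against.
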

For the forward and backward finite derivative of order one for the product of two functions, since  for $f,g$ defined in $\Omega_h$,  $fg=gf$ we get
\begin{equation*}
\begin{split}
\dpiu_h (fg) & = f \cdot \dpiu_h g + \dpiu_h f\cdot (\tau_h g) = (\dpiu_h f)g + (\tau_h f)\dpiu_h g;\\
    \dmen_h (fg) &= \dpiu_h(\tau_{-h}(fg)) =
    \tau_{-h}f\cdot\dmen_h g + \dmen_h f\cdot g=\tau_{-h} g \cdot \dmen f + \dmen g \cdot f(x).
\end{split}
\end{equation*}
Then, from   \begin{align*}
    \dpiu_h(f \dmen_h g ) &=  f \dpiu_h(\dmen_h g)  +\dpiu_h f \cdot \tau_h(\dmen_h g) 
     = f\Delta_h g + \dpiu_h f \cdot \dpiu_h g \\    \dmen_h(f\dpiu_h g ) & =  f\dmen_h(\dpiu_h g) + \dmen_h f (\tau_{-h}\dpiu_h g ) =
    f \Delta_h g +\dmen_h f \dmen_h g,
\end{align*}
we get a discrete version of the operator $\nabla\left( f\nabla g\right)$, that is 
\begin{equation}\label{eq:chain_rule_D}
     \left[\dpiu_h(f\dmen_h g) + \dmen_h(f\dpiu_h g)\right]/2 =f \, \Delta_h g  +  \left[\dpiu_h f\, \dpiu_h g+\dmen_h f \, \dmen_h g\right]/2.
\end{equation}
For convenience, we recall the discrete counterpart of the integration by parts formula  and the rule for the forward difference derivative for the ratio of two functions. 
\begin{lemma}[Chain rules]\label{lem:discrete_int_parts}
Let $f,g$ be two functions  in $   \Omega_{h,0}^+  $. The following chain rules hold
\begin{equation*}
\begin{split}
    \int_{\Omega_{h}^+} f(y) \dmen_h g(y) dy &= -f(0)g(0) - \int_{\Omega_{h,0}^+}\dpiu_h f(y) \cdot g(y)dy\\
   \int_{\Omega_{h,0}^+} f(y)\dpiu_h g(y)dy 
    & =  f(0)  g(0) - \int_{\Omega_{h}^+} (\dmen_h f(y)) g(y) dy. 
    \end{split}
\end{equation*} 
and
\begin{equation}\label{eq:chain_rule_D-D+} \begin{split}
     \int_{\Omega_{h}^+} f(y) \dmen_h \dpiu_h g(y) dy &=\dpiu_h f(0) g(0) -f(0)\dpiu_h \, g(0) +  \int_{\Omega_{h}^+}\dpiu_h\dmen_h f(y)\,g(y)dy 
    \end{split}
\end{equation} 
\end{lemma}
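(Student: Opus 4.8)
The plan is to reduce all three identities to Abel's summation-by-parts formula on $\mathbb Z_+$, the only genuine subtlety being to keep track of whether each discrete integral runs over $\Omega_h^+$ or over $\Omega_{h,0}^+$, i.e.\ whether the lattice point $0$ is included in the summation. Throughout I assume that $f$ and $g$ decay fast enough at infinity (say, are summable, or lie in $\mathcal S(\Omega_h)$), so that every boundary contribution at $+\infty$ produced by the telescoping vanishes and the identities are read as absolutely convergent series.

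For the first identity, cancelling the $h$ of the discrete integral against the $1/h$ of $\dmen_h$ writes $\int_{\Omega_h^+}f(y)\,\dmen_h g(y)\,dy$ as $\sum_{k\ge 1} f(kh)\bigl(g(kh)-g((k-1)h)\bigr)$. Regrouping the series by the value of $g$ (one step of Abel summation) turns it into $-f(h)g(0)-\sum_{k\ge 1}\bigl(f((k+1)h)-f(kh)\bigr)g(kh)$. Then I would write $-f(h)g(0)=-f(0)g(0)-\bigl(f(h)-f(0)\bigr)g(0)$ and observe that $\bigl(f(h)-f(0)\bigr)g(0)=h\,\dpiu_h f(0)\,g(0)$ is exactly the missing $k=0$ summand of $\sum_{k\ge 0}\bigl(f((k+1)h)-f(kh)\bigr)g(kh)=\int_{\Omega_{h,0}^+}\dpiu_h f(y)\,g(y)\,dy$; this produces precisely $-f(0)g(0)-\int_{\Omega_{h,0}^+}\dpiu_h f(y)\,g(y)\,dy$. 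The second identity is the mirror image: starting from $\int_{\Omega_{h,0}^+}f(y)\,\dpiu_h g(y)\,dy=\sum_{k\ge 0}f(kh)\bigl(g((k+1)h)-g(kh)\bigr)$, the same regrouping isolates the boundary term at $y=0$ and leaves $-\sum_{k\ge 1}\bigl(f(kh)-f((k-1)h)\bigr)g(kh)=-\int_{\Omega_h^+}\dmen_h f(y)\,g(y)\,dy$.

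For the second-order identity \eqref{eq:chain_rule_D-D+}, rather than expanding directly I would iterate the two first-order rules. Applying the first identity with $\dpiu_h g$ in the role of $g$ gives $\int_{\Omega_h^+}f\,\dmen_h\dpiu_h g\,dy=-f(0)\,\dpiu_h g(0)-\int_{\Omega_{h,0}^+}\dpiu_h f\,\dpiu_h g\,dy$; applying the second identity to the remaining integral, now with $\dpiu_h f$ in the role of the first factor, transfers the second difference onto $f$ --- recall $\dmen_h\dpiu_h=\dpiu_h\dmen_h=\Delta_h^D$ --- and produces the boundary term $\dpiu_h f(0)\,g(0)$. Collecting the two boundary contributions and using the identification of $\dmen_h\dpiu_h f$ with $\Delta_h^D f$ yields \eqref{eq:chain_rule_D-D+}.

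The argument is routine; the single point needing care is the lattice bookkeeping --- that $\dmen_h$ evaluated on $\Omega_h^+$ still involves the value at $0$, and likewise $\dpiu_h$ evaluated on $\Omega_{h,0}^+$ --- so that the index shifts in Abel summation reproduce exactly the boundary terms appearing on the right-hand sides, together with the vanishing contributions at $+\infty$. Nothing beyond Abel summation and the definitions \eqref{eq:finite_derivative} of $\dpiu_h$ and $\dmen_h$ is required.
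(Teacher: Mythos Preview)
The paper states this lemma without proof, treating it as a standard discrete summation-by-parts identity; your Abel-summation argument is the standard way to establish it and is correct in outline.

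One point of care: if you actually carry out the regrouping in the second first-order identity, you find
\[
\int_{\Omega_{h,0}^+} f\,\dpiu_h g\,dy
=\sum_{k\ge 0} f(kh)\bigl(g((k+1)h)-g(kh)\bigr)
=-f(0)g(0)-\int_{\Omega_h^+}\dmen_h f\cdot g\,dy,
\]
i.e.\ the boundary term comes with a \emph{minus} sign, not the $+f(0)g(0)$ printed in the statement (this looks like a typo in the paper). Your phrase ``isolates the boundary term at $y=0$'' hides this sign. It matters for the iteration: with the corrected sign, applying the first identity (with $g\mapsto\dpiu_h g$) and then the second (with $f\mapsto\dpiu_h f$) gives
\[
-f(0)\dpiu_h g(0)-\bigl(-\dpiu_h f(0)g(0)-\int_{\Omega_h^+}\Delta_h^D f\cdot g\,dy\bigr)
=\dpiu_h f(0)g(0)-f(0)\dpiu_h g(0)+\int_{\Omega_h^+}\Delta_h^D f\cdot g\,dy,
\]
which is exactly \eqref{eq:chain_rule_D-D+}; with the sign as printed you would get $-\dpiu_h f(0)g(0)$ instead. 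So your strategy is right, but when you write it up make the signs explicit and note the discrepancy with the stated second formula.
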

\begin{lemma}
    Given two functions $f,g$, then 
    \begin{equation}\label{eq:Leibniz_quotient}
    \dpiu_h \left(\frac{f}{g}\right) = \frac{g(x)\dpiu_h f(x) - f(x)\dpiu_h g(x)}{g(x+h)g(x)}
\end{equation}
\end{lemma}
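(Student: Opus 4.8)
The plan is to unravel the definition of the forward difference derivative $\dpiu_h$ from \eqref{eq:finite_derivative} applied to the quotient $f/g$, and then reduce everything over the common denominator $g(x+h)g(x)$. Concretely, I would start from
\begin{equation*}
\dpiu_h\!\left(\frac{f}{g}\right)(x) = \frac{1}{h}\left(\frac{f(x+h)}{g(x+h)} - \frac{f(x)}{g(x)}\right) = \frac{1}{h}\cdot\frac{f(x+h)g(x) - f(x)g(x+h)}{g(x+h)g(x)},
\end{equation*}
which is valid at every $x\in\Omega_h$ for which $g(x)\neq 0$ and $g(x+h)\neq 0$; this non-vanishing hypothesis on $g$ should of course be recorded in the statement (it is implicit in writing the ratio).

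Next I would rewrite the numerator by the standard add-and-subtract trick: inserting $\pm f(x)g(x)$ gives
\begin{equation*}
f(x+h)g(x) - f(x)g(x+h) = g(x)\bigl(f(x+h)-f(x)\bigr) - f(x)\bigl(g(x+h)-g(x)\bigr).
\end{equation*}
Recognizing $f(x+h)-f(x) = h\,\dpiu_h f(x)$ and $g(x+h)-g(x) = h\,\dpiu_h g(x)$ from \eqref{eq:finite_derivative}, the factor $h$ cancels the prefactor $1/h$, and one obtains exactly \eqref{eq:Leibniz_quotient}.

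There is no real obstacle here: the statement is a one-line algebraic identity, and the only point requiring a shred of care is the common-denominator manipulation together with the tacit assumption that $g$ does not vanish on the two points $x$ and $x+h$. One may also remark that this identity is consistent with the product rule already derived above, since applying $\dpiu_h$ to $f = (f/g)\cdot g$ via the first line of the displayed Leibniz formulas and solving for $\dpiu_h(f/g)$ reproduces \eqref{eq:Leibniz_quotient}; I would mention this as a sanity check rather than as the main argument.
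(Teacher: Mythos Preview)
Your argument is correct; it is the standard direct verification via common denominator and the add--subtract trick, and there is nothing to object to. Note that the paper itself states this lemma without proof, so there is no alternative argument to compare against; your write-up simply fills in the omitted one-line computation.
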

 
 \section{The space-discrete random  evolution systems}

According with the previous notations, we may write  the semi-discrete version of the  random system \eqref{eq:deterministic_intro} - \eqref{eq:stochastic_boundary_condition}, whose solution is given by the bivariate process $(s,c)$ with $\varphi=\varphi(c)$ in \eqref{eq:linear_porosity} and $\lambda\in \mathbb R_+$, as the following continuum time and  finite difference in space system 
for $(t,x)\in (0,T]\times \Omega_h^+$:
\begin{equation}\label{eq:discretesystem}
\begin{split}
{\partial}_t (\varphi(c) s) & = \frac{1}{2}[\dpiu_h\varphi(c)\dpiu_hs+\dmen_h\varphi(c) \dmen_h s]+\varphi \Delta_h s -\lambda \varphi(c) s c, \\
& =\frac{1}{2}\left[\dpiu_h(\varphi(c)\dmen_h s) + \dmen_h(\varphi(c)\dpiu_h s)\right] -\lambda \varphi(c) s c, \\
{\partial}_t c & = -\lambda  \varphi(c) s c,  \\
\end{split}
\end{equation} 
with  initial and boundary conditions given by
\begin{equation}\label{eq:discretesystem_Initial_boundary}
\begin{aligned}
 & s(0,x)=s_0(x),\quad  c(0,x)=c_0(x), & x\in \Omega_h^+;\\
&s(t,0)=\psi_t, \quad  c(t,0)=Ac_0(x)\left[\varphi(c_0)e^{\lambda A \int_0^t \psi_\tau d\tau} -Bc_0(x)\right]^{-1},\quad &t\in [0,T],\end{aligned}
\end{equation} where the random boundary condition $\psi$   has the regularity \eqref{eq:regularity_boundary_process}.   

\smallskip

Note that from \eqref{eq:discretesystem} with a little algebra we may also deduce an explicit equation for $s$,  by inserting the linear model of $\varphi$ in  \eqref{eq:linear_porosity} and  defining the following functions \begin{equation}\label{eq:functions_bc_gamma}
    \begin{split}
        b_c  = {\frac12} \frac{B}{A+Bc}, \qquad \gamma_c=\lambda c. 
    \end{split}
\end{equation} 
 \begin{proposition}
A discrete in space and continuous in time version of system \eqref{eq:s2}-\eqref{eq:c2} is given by the following system in $(0,T] \times \Omega_h^+ $
\begin{equation}\label{eq:system_s_c_discrete}
\begin{split}
    \partial_t s &= \Delta_h s  + b_c \Big[\dpiu_h c \dpiu_h s+\dmen_h c \dmen_h s\Big] + \gamma_c B s^2- \gamma_c s,   \\
    \partial_t c  &= -\lambda  \varphi s c,    
    \end{split}
\end{equation}
 where the $b_c$ and $\gamma_c$ are given by \eqref{eq:functions_bc_gamma} and the initial-boundary conditions are \eqref{eq:discretesystem_Initial_boundary}.
 \end{proposition}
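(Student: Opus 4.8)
The statement is a purely algebraic reformulation of the semi-discrete system \eqref{eq:discretesystem} under the affine porosity law \eqref{eq:linear_porosity}, so the plan is simply to substitute $\varphi(c)=A+Bc$ and to isolate $\partial_t s$. First I would differentiate the product on the left-hand side of the first equation in \eqref{eq:discretesystem}: since $\varphi$ is affine in $c$ one has $\partial_t(\varphi(c)s)=\varphi(c)\,\partial_t s + B s\,\partial_t c$, and inserting the ODE $\partial_t c=-\lambda\varphi(c)sc$ from \eqref{eq:discretesystem} gives $\partial_t(\varphi(c)s)=\varphi(c)\,\partial_t s-\lambda B\,\varphi(c)\,s^2 c$.

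Next I would rewrite the spatial part. Starting from the divergence form of the right-hand side of \eqref{eq:discretesystem} and applying the discrete chain rule \eqref{eq:chain_rule_D} with $f=\varphi(c)$ and $g=s$, the flux term equals $\varphi(c)\,\Delta_h s + \tfrac12\big[\dpiu_h\varphi(c)\,\dpiu_h s+\dmen_h\varphi(c)\,\dmen_h s\big]$ (this is exactly the identity already recorded between the two lines of \eqref{eq:discretesystem}). Because $\varphi$ is affine, $\dpiu_h\varphi(c)=B\,\dpiu_h c$ and $\dmen_h\varphi(c)=B\,\dmen_h c$, so this term becomes $\varphi(c)\,\Delta_h s+\tfrac{B}{2}\big[\dpiu_h c\,\dpiu_h s+\dmen_h c\,\dmen_h s\big]$.

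Equating the two expressions and dividing through by $\varphi(c)$, which is legitimate since $\varphi(c)\ge\varphi_{min}>0$ by \eqref{eq:limit_porosity}, yields
$$\partial_t s=\Delta_h s+\frac{B}{2\varphi(c)}\big[\dpiu_h c\,\dpiu_h s+\dmen_h c\,\dmen_h s\big]+\lambda B s^2 c-\lambda s c.$$
Recognizing $b_c=\tfrac12\,B/(A+Bc)$ and $\gamma_c=\lambda c$ as in \eqref{eq:functions_bc_gamma} turns this into precisely \eqref{eq:system_s_c_discrete}; the equation for $c$ is carried over verbatim from \eqref{eq:discretesystem}, and the initial–boundary data \eqref{eq:discretesystem_Initial_boundary} are untouched by the manipulation.

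Since every step is an identity valid pointwise on $\Omega_h^+$ for any $(s,c)$ that is differentiable in time, there is no genuine analytic obstacle: the only points needing (minor) care are the use of the affine structure of $\varphi$ both in the time derivative and in the finite differences, and the strict positivity of $\varphi(c)$ required for the division. One should also keep in mind that $\dmen_h s$ and $\dmen_h c$ evaluated at $x=h$ invoke the boundary values $s(t,0)=\psi_t$ and the explicit $c(t,0)$ prescribed in \eqref{eq:discretesystem_Initial_boundary}, so the derivation is consistent with the imposed boundary conditions.
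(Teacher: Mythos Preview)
Your derivation is correct and is exactly the ``little algebra'' the paper alludes to immediately before the proposition: expand $\partial_t(\varphi(c)s)$ using the affine law $\varphi(c)=A+Bc$ and the ODE for $c$, use the identity already displayed in \eqref{eq:discretesystem} together with $D^{\pm}_h\varphi(c)=B\,D^{\pm}_h c$ for the spatial part, and divide by $\varphi(c)>0$. There is nothing to add.
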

 
The final goal of the present study is to establish the convergence of the  solution of the semi-discrete  system \eqref{eq:discretesystem}-\eqref{eq:discretesystem_Initial_boundary}
to the unique mild solution  of the system \eqref{eq:deterministic_intro}	 - \eqref{eq:stochastic_boundary_condition}, with the dynamical boundary process satisfying the regularity \eqref{eq:regularity_boundary_process} as in Definition \ref{def:mild_solution_s_nonlin}. 
In order to carry out the study, we follow the same splitting strategy as in   \cite{2025_SPA_MauMorUgo} and we represent the process   $s$   as  the sum of two functions $u+v$, where $u$ is the solution of a space discrete heat equation with zero initial condition and random   boundary condition inherited by the whole system, while $v$ solves a non-linear equation coupled with $u$ and depending upon $c$, seen as a function of $u+v$,  but  with deterministic zero boundary condition and initial condition given by $v_0(x)=s_0(x)-u_0(x)=s_0(x)$.

\begin{proposition}
Let  $u: [0,T]\times \Omega_{h,0}^+\rightarrow \mathbb R_+$ be the solution of the following  random discrete heat equation
\begin{equation*} 
    \begin{cases}
        \partial_t u(t,x)  =\Delta_h u(t,x) = \dmen_h \dpiu_h u(t,x)& (t,x) \in (0,T]\times \Omega_h^+; \\
        u(0,x)  =0, & x\in \Omega_h^+;\\
        u(t,0)  = \psi_t,& t\in [0,T],\\
    \end{cases}
\end{equation*}
with $\psi$ satisfying condition \eqref{eq:regularity_boundary_process}.  
Furthermore, let 
 $v$ be the solution of the following non linear space-discrete system  
    \begin{equation}\label{eq:nonlinear_discrete_system_v}
    \begin{cases}
    \partial_t v = \Delta_h v +b_c(t,x)[\dpiu_hc\dpiu_h v + \dmen_h c\dmen_h v] -\gamma v\\
\quad \quad \quad  +b_c(t,x)[\dpiu_hc\dpiu_h u + \dmen_h c\dmen_h u] \\
\quad \quad \quad-\gamma_c(t,x) u+\gamma_c(t,x)B(u+v)^2, & (t,x) \in (0,T]\times \Omega_h^+;\\
        v(0,x)   = s_0(x), & x\in \Omega_h^+;\\
        v(t,0)   = 0,  & t\in [0,T],
    \end{cases}
    \end{equation}
    with $c$ given by
    \begin{align}\label{eq:c_discrete}
	c(t,x) = A c_0(x)\left[\varphi(c_0(x))e^{\lambda A\int_0^t (u+v)(\tau,x)d\tau} -Bc_0(x)\right]^{-1}, \qquad (t,x) \in [0,T]\times \Omega_h^+.
	\end{align} Putting $s=u+v$, then $(s,c)$ is a solution to the system \eqref{eq:discretesystem} - \eqref{eq:discretesystem_Initial_boundary}.
\end{proposition}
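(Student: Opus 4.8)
The assertion is a verification: we must show that $(s,c)=(u+v,c)$, with $u$, $v$ and $c$ as prescribed, satisfies every line of \eqref{eq:discretesystem}--\eqref{eq:discretesystem_Initial_boundary}. I would organise the check in three steps — the side conditions, the ODE for $c$, and the parabolic equation for $s$, the last being first put in the explicit form \eqref{eq:system_s_c_discrete} and then recast in the conservative form \eqref{eq:discretesystem}. For the side conditions, by construction $u(0,\cdot)=0$ and $v(0,\cdot)=s_0$, hence $s(0,\cdot)=s_0$; and $u(t,0)=\psi_t$, $v(t,0)=0$ give $s(t,0)=\psi_t$ for all $t\in[0,T]$. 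Evaluating \eqref{eq:c_discrete} at $t=0$ and using \eqref{eq:linear_porosity} gives $c(0,x)=Ac_0(x)[\varphi(c_0(x))-Bc_0(x)]^{-1}=Ac_0(x)[A+Bc_0(x)-Bc_0(x)]^{-1}=c_0(x)$; and since $(u+v)(\tau,0)=\psi_\tau$, the same formula at $x=0$ reproduces the boundary value for $c$ prescribed in \eqref{eq:discretesystem_Initial_boundary}.

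For the ODE, fix $x\in\Omega_h^+$, set $E(t)=e^{\lambda A\int_0^t (u+v)(\tau,x)\,d\tau}$ and $D(t)=\varphi(c_0(x))E(t)-Bc_0(x)$, so that $c(t,x)=Ac_0(x)/D(t)$ and $E'(t)=\lambda A\,s(t,x)E(t)$. Then $D'(t)=\lambda A\,s(t,x)\,\varphi(c_0(x))E(t)=\lambda A\,s(t,x)\,(D(t)+Bc_0(x))$, so that
\[
\partial_t c=-\frac{Ac_0(x)D'(t)}{D(t)^2}=-\lambda A\,s\,c\Big(1+\frac{Bc_0(x)}{D(t)}\Big)=-\lambda A\,s\,c\Big(1+\frac{Bc}{A}\Big)=-\lambda\,\varphi(c)\,s\,c ,
\]
where we used $Bc_0(x)/D(t)=Bc(t,x)/A$. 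This is the ODE in \eqref{eq:discretesystem} (equivalently \eqref{eq:c2}); indeed \eqref{eq:c_discrete} is just the discrete-in-space analogue of \eqref{eq:df_mild_solution_c}, the space node acting as a parameter.

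For the parabolic equation, on $(0,T]\times\Omega_h^+$ add $\partial_t u=\Delta_h u$ to the evolution equation in \eqref{eq:nonlinear_discrete_system_v}; by linearity of $\Delta_h$, collecting the zero-order terms as $-\gamma_c v-\gamma_c u=-\gamma_c s$ and the quadratic term as $\gamma_c B(u+v)^2=\gamma_c Bs^2$, one obtains exactly \eqref{eq:system_s_c_discrete},
\[
\partial_t s=\Delta_h s+b_c\big[\dpiu_h c\,\dpiu_h s+\dmen_h c\,\dmen_h s\big]-\gamma_c s+\gamma_c B s^2 .
\]
To reach the conservative form \eqref{eq:discretesystem}, multiply by $\varphi=\varphi(c)$: from \eqref{eq:linear_porosity} and \eqref{eq:functions_bc_gamma} we have $\varphi b_c=B/2$, $\dpiu_h\varphi=B\,\dpiu_h c$, $\dmen_h\varphi=B\,\dmen_h c$, hence $\varphi b_c[\dpiu_h c\,\dpiu_h s+\dmen_h c\,\dmen_h s]=\tfrac12[\dpiu_h\varphi\,\dpiu_h s+\dmen_h\varphi\,\dmen_h s]$, while $\varphi\gamma_c s=\lambda\varphi c s$ and $\varphi\gamma_c Bs^2=\lambda B\varphi c s^2$. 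Since $\partial_t(\varphi s)=\varphi\,\partial_t s+Bs\,\partial_t c$ and $\partial_t c=-\lambda\varphi s c$ by the previous step, the term $\varphi\gamma_c Bs^2$ cancels against $Bs\,\partial_t c$, leaving
\[
\partial_t(\varphi s)=\varphi\Delta_h s+\tfrac12[\dpiu_h\varphi\,\dpiu_h s+\dmen_h\varphi\,\dmen_h s]-\lambda\varphi s c ,
\]
i.e. the first form of \eqref{eq:discretesystem}; its equivalence with the conservative form is the discrete product rule \eqref{eq:chain_rule_D} with $f=\varphi(c)$ and $g=s$.

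\emph{Expected main difficulty.} There is no genuine obstacle — the statement is a verification and all ingredients are the discrete chain rules of Section~\ref{sec:space-discrete_setting}. The two places requiring care are the simplification $\varphi(c_0(x))E(t)/D(t)=\varphi(c(t,x))/A$ in differentiating the closed form of $c$, and the bookkeeping of $b_c,\gamma_c,\varphi$ so that the transport and reaction terms produced by summing the $u$- and $v$-equations coincide with those in \eqref{eq:discretesystem}. One should also note that the parabolic identity is read only on $\Omega_h^+$, the node $x=0$ being governed by the prescribed data, so that no compatibility beyond $u(t,0)=\psi_t$, $v(t,0)=0$ is needed.
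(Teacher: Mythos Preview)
Your proof is correct and follows the same approach as the paper: sum the $u$- and $v$-equations and use linearity of $\Delta_h$, $\dpiu_h$, $\dmen_h$ to recover \eqref{eq:system_s_c_discrete}, then translate back to \eqref{eq:discretesystem}. Your write-up is in fact more thorough than the paper's, which records only the summed equation; you additionally verify that \eqref{eq:c_discrete} solves the ODE for $c$, check the initial and boundary data, and carry out the passage from the explicit form to the conservative form via \eqref{eq:chain_rule_D}.
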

\begin{proof}
The two sub-systems are obtained with a little of algebra, by   $s=u+v $, and exploiting the linearity of the operator $\Delta_h$ and $D^+_h, D^-_h$  as follows
	\begin{align*}
		\partial_t u+\partial_t v & = \Delta_h u+\Delta_h v + b_c(t,x)\Big[\dpiu_h c(\dpiu_h u+\dpiu_h v) +\dmen_h c(\dmen_h u+\dmen_h v)\Big]\\
  &+\gamma_c(t,x)B\left(u^2+2uv +v^2\right)-\gamma_c(t,x) u-\gamma_c(t,x) v.	\end{align*}\end{proof}
  \begin{remark}[Bounded porosity]
      Note that since \eqref{eq:c_discrete} holds, there always exist parameters such that \eqref{eq:limit_porosity} hold true. From now on, we suppose that parameters $A,B$ are chosen in the proper way.
  \end{remark}
\begin{remark}	After our splitting strategy the problem is divided into two different sub-systems: the first is a discrete heat equation for $u$ with zero initial condition and coupled with the stochastic process $\psi$ at the boundary, so that we can take advantage of the regularization effect of the discrete heat equation to compensate for the irregularity of the stochastic boundary path. The second problem is a space-discrete  nonlinear and nonlocal  system for  $v$, whose evolution also depends upon the given $u$ and its derivative, but endowed with a deterministic initial condition and a zero boundary condition. 
\end{remark}

\section{A random space-discrete heat equation}

Let us focus on the study of the semi-discrete heat equation in $[0,T]\times \Omega_{h,0}^+$\begin{equation}\label{eq:discreteheatequation}
    \begin{cases}
        \partial_t u(t,x)  =\Delta_h u(t,x) = \dmen_h \dpiu_h u(t,x)& (t,x) \in (0,T]\times \Omega_h^+; \\
        u(0,x)  =0, & x\in \Omega_h^+;\\
        u(t,0)  = \psi_t,& t\in [0,T],\\
    \end{cases}
\end{equation}
where $\psi$ satisfies  condition \eqref{eq:regularity_boundary_process}. For completeness, we derive an explicit semigroup representation of the solution and then we provide some a priori estimates.

\subsection{The semigroup representation of the solution}

 For the convenience of the reader we discuss the representation of the solution of the problem \eqref{eq:discreteheatequation} into details, starting from the fundamental solution   $H_D(t,x)$.
 
 \begin{proposition}[Fundamental discrete heat solution ]\label{prop:fundamental_heat_solution}
   The solution of the following initial value problem in $\Omega_h$  \begin{equation}\label{eq:discreteheatkernel}
	\begin{cases}
 \partial_t H_D(t,x) &= \Delta_h H_D(t,x), \quad (t,x)\in  [0,T] \times \Omega_h;\\
		H_D(0,x) &= \delta_{0}(x), \qquad \qquad  	x\in  \Omega_h,
	\end{cases}
\end{equation}
where  $\delta_0(x)=\frac{1}{h} 1_{0}(x)$,   is given by \begin{equation}\label{eq:H_D}
	H_D(t,x) = \frac{1}{2\pi} \int_{-\pi/h}^{\pi/h} e^{i\xi x}e^{-\frac{4}{h^2}\sin^2(h\xi/2)t}d\xi.
\end{equation}
 \end{proposition}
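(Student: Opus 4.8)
The plan is to diagonalise the lattice heat operator by the discrete Fourier transform $\mathcal{F}_h$ of \eqref{eq:discreteFourier}, which turns \eqref{eq:discreteheatkernel} into a scalar linear ODE in time, parametrised by the torus variable $\xi\in\mathbb{T}_h$. The first step is to identify the Fourier symbol of the discrete Laplacian. Writing $\Delta_h=(\tau_h-2\mathcal{I}+\tau_{-h})/h^2$ as in \eqref{eq:discreteLaplacian} and using the shift rule $\mathcal{F}_h(\tau_{\pm h}f)(\xi)=e^{\mp ih\xi}\mathcal{F}_h(f)(\xi)$ — immediate from \eqref{eq:discreteFourier} after a change of summation index — one finds
\[
\mathcal{F}_h(\Delta_h f)(\xi)=\frac{e^{-ih\xi}-2+e^{ih\xi}}{h^2}\,\mathcal{F}_h(f)(\xi)=-\frac{4}{h^2}\sin^2\!\Big(\frac{h\xi}{2}\Big)\,\mathcal{F}_h(f)(\xi),
\]
and, directly from \eqref{eq:discreteFourier}, $\mathcal{F}_h(\delta_0)\equiv 1$ on $\mathbb{T}_h$ (the same elementary computation already used in the proof of Proposition \ref{prop:Delta_in_Besov}).

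Next I would set $\widehat H_D(t,\xi)=\mathcal{F}_h\big(H_D(t,\cdot)\big)(\xi)$ and apply $\mathcal{F}_h$ to \eqref{eq:discreteheatkernel}: for each fixed $\xi\in\mathbb{T}_h$ this yields the ODE $\partial_t\widehat H_D(t,\xi)=-\frac{4}{h^2}\sin^2(h\xi/2)\,\widehat H_D(t,\xi)$ with datum $\widehat H_D(0,\xi)=1$, whose unique solution is $\widehat H_D(t,\xi)=\exp\!\big(-\frac{4}{h^2}\sin^2(h\xi/2)\,t\big)$. Since this is a smooth bounded function on the compact torus, the inversion formula \eqref{eq:inverse_fourier_torous} applies and gives
\[
H_D(t,x)=\frac{1}{2\pi}\int_{-\pi/h}^{\pi/h}e^{-i\xi x}\,e^{-\frac{4}{h^2}\sin^2(h\xi/2)t}\,d\xi,
\]
which is exactly \eqref{eq:H_D}: the symbol $e^{-\frac{4}{h^2}\sin^2(h\xi/2)t}$ is even in $\xi$ and the interval of integration is symmetric, so the substitution $\xi\mapsto-\xi$ replaces $e^{-i\xi x}$ by $e^{i\xi x}$.

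To close the argument I would check directly that \eqref{eq:H_D} solves \eqref{eq:discreteheatkernel}. Differentiation in $t$ under the integral sign is licit because the integrand and its $t$-derivative are continuous and bounded on $[0,T]\times\mathbb{T}_h$; acting with $\Delta_h$ on $e^{i\xi x}$ in the space variable produces precisely the factor $-\frac{4}{h^2}\sin^2(h\xi/2)$, so $\partial_tH_D=\Delta_hH_D$. The initial condition reduces to the orthogonality relation $\frac{1}{2\pi}\int_{-\pi/h}^{\pi/h}e^{i\xi nh}\,d\xi=\frac{1}{h}\mathbf{1}_{\{n=0\}}$ (value $2\pi/h$ when $n=0$, and zero otherwise), i.e. $H_D(0,\cdot)=\delta_0$. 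Uniqueness in a natural class (say $H_D(t,\cdot)\in\mathcal{S}'(\Omega_h)$, or $H_D(t,\cdot)\in L^2(\Omega_h)$ for each $t$ with locally integrable time-dependence) is already contained in the Fourier reduction above, the transform of any such solution satisfying the same scalar ODE for a.e.\ $\xi$; alternatively it follows from the energy identity $\frac{d}{dt}\|H_D(t,\cdot)\|_{L^2(\Omega_h)}^2=2\int_{\Omega_h}H_D\,\Delta_hH_D=-2\int_{\Omega_h}|\dpiu_hH_D|^2\le 0$, using summation by parts as in Lemma \ref{lem:discrete_int_parts} (on the full lattice the boundary terms are absent).

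I do not expect any substantial obstacle: the proof is a one-variable computation once the Fourier symbol is identified. The only points demanding a little care are the bookkeeping of the sign convention in \eqref{eq:discreteFourier}, so that the symbol comes out as $-\frac{4}{h^2}\sin^2(h\xi/2)$ and not its negative, and the routine justification of differentiating under the integral sign — both trivial on the compact torus — together with, if one wishes to state uniqueness, a precise specification of the admissible class of solutions.
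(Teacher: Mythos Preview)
Your proposal is correct and follows essentially the same route as the paper: apply $\mathcal{F}_h$ to the equation, identify the symbol of $\Delta_h$ as $-\frac{4}{h^2}\sin^2(h\xi/2)$, solve the resulting scalar ODE with initial datum $\widehat H_D(0,\xi)=1$, and invert via \eqref{eq:inverse_fourier_torous}, with the even symmetry of the symbol reconciling the sign in the exponential. Your additional direct verification and uniqueness remarks go slightly beyond what the paper writes but are harmless add-ons, not a different method.
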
 
\begin{proof} 
Let $\hat{H}_D$ be the Fourier transform of ${H}_D$ as in  \eqref{eq:discreteFourier}, that is for any $\xi\in \mathbb T_h,$ \[ \hat{H}_D(t,\xi) = h\sum_{x\in \Omega_h} e^{ix\xi} H_D(t,x), \qquad \hat{H}_D(0,\xi) = h \sum_{x\in \Omega_h} e^{ix\xi} \delta_0(x) =1.
\]By the equation in \eqref{eq:discreteheatkernel}, we get  for $(t,\xi)\in  [0,T] \times \mathbb T_h$

    \begin{align*}
    \frac{\partial}{\partial t} \hat{H}_D(t,\xi) &  = h \frac{1}{h^2} \sum_{x\in\Omega_h} e^{ix\xi}H_D(t,x)\left({e^{ih\xi} + e^{-ih\xi} - 2}\right)  = -\frac{4}{h^2} \sin^2\biggl(\frac{h\xi}{2}\biggr) \hat{H}_D(t,\xi),
\end{align*}
 whose solution is, for any $(t,\xi)\in [0,T]\times \mathbb T_h$,
\begin{equation*}
	\hat{H}_D(t,\xi) = \hat{H}_D(0,\xi) e^{-\frac{4}{h^2} \sin^2(h\xi/2)t},
\end{equation*}
By applying the inverse Fourier transform    \eqref{eq:inverse_fourier_torous} we obtain the expression for $H_D$, that is 
\begin{equation*}
\begin{split}
H_D(t,x) &= \frac{1}{2\pi} \int_{-\pi/h}^{\pi/h} e^{-ix\xi} \hat{H}_D(t,\xi)d\xi= \frac{1}{2\pi} \int_{-\pi/h}^{\pi/h} e^{-ix\xi}   e^{-\frac{4}{h^2} \sin^2(h\xi/2)t}d\xi\\ &= \frac{1}{2\pi} \int_{-\pi/h}^{+\pi/h} e^{ix\bar{\xi}}   e^{-\frac{4}{h^2} \sin^2(h\bar{\xi}/2)t}d\bar{\xi}.
\end{split}
\end{equation*}
\end{proof}
\begin{definition}[Discrete heat semigroup] Given the fundamental solution $H_D$, we define the discrete heat semigroup as the linear operator $e^{t\Delta_h}$, such that for any measurable function  $f$
\begin{equation}\label{eq:heat_semigroup_exp}
e^{\Delta_h t} f:=  H_D(t,\cdot)*f (x)=\int_{\Omega_h} H_D(t,x-y)f(y)dy= h\sum_{y\in \Omega_h}  H_D(t,x-y)f(y).
\end{equation}
    
\end{definition}
It follows that the solution $\tilde{u}(t,x)$  to the space-discrete heat equation on the lattice $\Omega_h$ admits the following useful representation in terms of a general initial condition.

\begin{corollary}[Initial value problem]
The solution to the following initial value problem
\begin{equation}\label{eq:heat_equation_Omega_h}
	\begin{cases}
	\displaystyle	\frac{\partial \tilde{u}}{\partial t}(t,x) = \Delta_h \tilde{u}(t,x), & \quad (t,x)\in  [0,T] \times \Omega_h;\\ 
		\tilde{u}(0,x) = g(x) & \quad x\in \Omega_h;\\
	\end{cases}
\end{equation}
admits the following representation in terms of the fundamental solution on $\Omega_h$ 
\begin{equation}\label{eq:HeatSolution_Omega_h}
	\tilde{u}(t,x) = h\sum_{y\in \Omega_h} H_D(t,x-y)g(y)=e^{\Delta_h t}g (x).
\end{equation}
\end{corollary}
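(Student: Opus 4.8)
The plan is to reduce everything to the fundamental solution $H_D$ by means of the discrete Fourier transform $\mathcal{F}_h$, exploiting the linearity and the translation invariance of $\Delta_h$. First I would apply $\mathcal{F}_h$ — extended to $\mathcal{S}'(\Omega_h)$ as in the Remark following its definition — to both equations in \eqref{eq:heat_equation_Omega_h}. Since $\Delta_h$ acts as the Fourier multiplier with symbol $-\tfrac{4}{h^2}\sin^2(h\xi/2)$ (this is exactly the computation already performed in the proof of Proposition \ref{prop:fundamental_heat_solution}), the problem collapses, for each frozen $\xi\in\mathbb{T}_h$, to the scalar linear ODE $\partial_t\hat{\tilde u}(t,\xi)=-\tfrac{4}{h^2}\sin^2(h\xi/2)\,\hat{\tilde u}(t,\xi)$ with datum $\hat{\tilde u}(0,\xi)=\hat g(\xi)$, whose unique solution is $\hat{\tilde u}(t,\xi)=\hat g(\xi)\,e^{-\frac{4}{h^2}\sin^2(h\xi/2)t}=\hat g(\xi)\,\hat H_D(t,\xi)$, the last identity being precisely the Fourier-side expression for $H_D$ obtained in Proposition \ref{prop:fundamental_heat_solution}.

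Next I would invert the transform and use the convolution identity $\mathcal{F}_h\big(H_D(t,\cdot)*g\big)=\hat H_D(t,\cdot)\,\hat g$, valid with the $h$-normalised convolution \eqref{eq:heat_semigroup_exp}: this gives at once $\tilde u(t,x)=\big(H_D(t,\cdot)*g\big)(x)=h\sum_{y\in\Omega_h}H_D(t,x-y)g(y)=e^{\Delta_h t}g(x)$. The same argument also yields uniqueness in the natural class in which all these transforms make sense (functions of at most polynomial growth, i.e.\ $\mathcal{S}'(\Omega_h)$), since the Fourier-side ODE has a unique solution for each $\xi$. The initial condition is recovered directly from $H_D(0,\cdot)=\delta_0$: indeed $h\sum_{y}H_D(0,x-y)g(y)=h\sum_{y}\tfrac1h\,1_{\{x=y\}}\,g(y)=g(x)$.

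Alternatively, and perhaps more transparently, one can verify \eqref{eq:HeatSolution_Omega_h} directly in physical space: differentiating the series in $t$ and using $\partial_t H_D=\Delta_h H_D$ together with the translation invariance of $\Delta_h$ — so that $\Delta_h\big[H_D(t,\cdot-y)\big](x)=(\Delta_h H_D)(t,x-y)$ — gives $\partial_t\tilde u=\Delta_h\tilde u$ termwise. The only point that needs a word of justification, and the single mildly technical step of the whole argument, is the interchange of $\partial_t$ and of $\Delta_h$ with the summation over $y$: this is legitimate because for each fixed $t>0$ both $H_D(t,\cdot)$ and $\partial_t H_D(t,\cdot)$ decay faster than any polynomial in $|x|$ — which follows from \eqref{eq:H_D} by repeated integration by parts in $\xi$ — so that the series and its differentiated versions converge absolutely and locally uniformly in $t$ whenever $g$ has polynomial growth. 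Everything else is bookkeeping, and indeed the statement is essentially a direct corollary of Proposition \ref{prop:fundamental_heat_solution} and the linearity of the scheme.
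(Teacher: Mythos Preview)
Your proposal is correct and follows essentially the same approach as the paper, which simply invokes Proposition~\ref{prop:fundamental_heat_solution}, linearity, and the definition~\eqref{eq:heat_semigroup_exp} in a single line. You have spelled out in detail what the paper leaves implicit --- the Fourier-side ODE argument and the convolution identity --- but the underlying idea is identical.
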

\begin{proof}It simply follows by Proposition \ref{prop:fundamental_heat_solution}, linearity and  \eqref{eq:heat_semigroup_exp}.
\end{proof}

The  solution of the discrete heat initial value problem on  $\Omega_h^+$ with null boundary condition can be obtained by an odd reflection from \eqref{eq:HeatSolution_Omega_h}.

\begin{proposition}
\label{lemma1}
The solution  to the following initial-boundary problem on   $[0,T]\times\Omega_{h,0}^+$  
\begin{equation}\label{eq:heat_equation_Omega_h_+}
	\begin{cases}
		\displaystyle\frac{\partial w}{\partial t}(t,x) = \Delta_h w(t,x) =  \dmen_h \dpiu_h w(t,x), & \quad (t,x)\in  (0,T] \times \Omega_h^+;\\
		w(0,x) = f(x), &\quad x\in   \Omega_h^+;\\
         w(t,0)=0, &\quad t\in   [0,T],
	\end{cases}
\end{equation}
admits the following representation
\begin{equation}\label{eq:HeatSolution_Omega_h_+}
	w(t,x) =  h\sum_{y\in \Omega_h^+} G_D(t,x,y)f(y),
\end{equation}
where the discrete heat kernel $G_D$ on $\Omega^+_h $ is given by
\begin{equation}\label{eq:Gd}
G_D(t,x,y) = H_D(t,x-y) - H_D(t,x+y).
\end{equation}	
\end{proposition}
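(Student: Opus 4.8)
The plan is to use the classical odd–reflection trick, reducing the initial–boundary problem \eqref{eq:heat_equation_Omega_h_+} on $\Omega_{h,0}^+$ to the Cauchy problem \eqref{eq:heat_equation_Omega_h} on the whole lattice $\Omega_h$, which is already solved by the Corollary. First I would extend the datum $f\colon\Omega_h^+\to\R$ to the odd function $\tilde f\colon\Omega_h\to\R$ given by $\tilde f(x)=f(x)$ for $x\in\Omega_h^+$, $\tilde f(0)=0$, and $\tilde f(-x)=-f(x)$ for $x\in\Omega_h^+$, and set $\tilde w(t,x):=e^{\Delta_h t}\tilde f(x)=h\sum_{y\in\Omega_h}H_D(t,x-y)\tilde f(y)$, which by \eqref{eq:HeatSolution_Omega_h} solves \eqref{eq:heat_equation_Omega_h} with initial datum $\tilde f$.

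The key point is that $\tilde w(t,\cdot)$ stays odd for every $t\in[0,T]$. This can be seen in two equivalent ways. Directly: $H_D(t,\cdot)$ is an even function of the space variable, which is immediate from \eqref{eq:H_D} since the integrand is invariant under the joint change $\xi\mapsto-\xi$, $x\mapsto-x$; performing the substitution $y\mapsto-y$ in the sum defining $\tilde w$ and using $\tilde f(-y)=-\tilde f(y)$ then yields $\tilde w(t,-x)=-\tilde w(t,x)$. Alternatively, by uniqueness for \eqref{eq:heat_equation_Omega_h}: the map $x\mapsto -\tilde w(t,-x)$ again solves the discrete heat equation (the operator $\Delta_h$ commutes with the reflection $x\mapsto-x$, once more by evenness of the stencil) with the same initial datum $-\tilde f(-\cdot)=\tilde f$, hence coincides with $\tilde w$. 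In particular $\tilde w(t,0)=0$ for all $t$, so the reflected solution automatically carries the zero Dirichlet condition.

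Next I would restrict to $\Omega_{h,0}^+$, setting $w:=\tilde w|_{[0,T]\times\Omega_{h,0}^+}$. For any $x\in\Omega_h^+$ one has $x\pm h\in\Omega_{h,0}^+$, so $\Delta_h\tilde w(t,x)=\Delta_h w(t,x)$ depends only on values of $w$ on $\Omega_{h,0}^+$; hence $w$ solves $\partial_t w=\dmen_h\dpiu_h w$ on $(0,T]\times\Omega_h^+$, with $w(0,x)=\tilde f(x)=f(x)$ on $\Omega_h^+$ and $w(t,0)=0$, i.e.\ $w$ solves \eqref{eq:heat_equation_Omega_h_+}. Finally, splitting the sum defining $\tilde w$ into the ranges $y>0$, $y=0$ (which contributes nothing since $\tilde f(0)=0$) and $y<0$, and in the last range substituting $y=-z$ with $z\in\Omega_h^+$ and using $\tilde f(-z)=-f(z)$, gives
\[
w(t,x)=h\sum_{y\in\Omega_h^+}\bigl(H_D(t,x-y)-H_D(t,x+y)\bigr)f(y)=h\sum_{y\in\Omega_h^+}G_D(t,x,y)f(y),
\]
which is exactly \eqref{eq:HeatSolution_Omega_h_+}.

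The step requiring the most care, though still essentially routine, is the justification of the interchange of the infinite summation with the time derivative and with the reflection $y\mapsto-y$: this is licit because $H_D(t,\cdot)$ and $\partial_t H_D(t,\cdot)$ decay rapidly in space — $H_D$ being, up to normalisation, the transition kernel of the uniform continuous-time random walk of Remark \ref{process_laplacian} — provided $f$ grows at most polynomially, which is the $\mathcal S'(\Omega_h)$ setting adopted throughout. If uniqueness of the solution to \eqref{eq:heat_equation_Omega_h_+} in this class is also wanted, it follows either from the discrete energy identity $\tfrac{d}{dt}\|w(t,\cdot)\|_{L^2(\Omega_h^+)}^2\le 0$ when $w(t,0)=0$, obtained via the integration-by-parts rules of Lemma \ref{lem:discrete_int_parts}, or again from uniqueness for \eqref{eq:heat_equation_Omega_h} after odd reflection.
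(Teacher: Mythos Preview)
Your proof is correct and follows essentially the same odd-reflection argument as the paper: extend $f$ oddly to $\Omega_h$, solve the Cauchy problem there via the Corollary, observe the boundary condition $w(t,0)=0$ from the evenness of $H_D$, and split the sum over $\Omega_h$ into its positive and negative parts to obtain the kernel $G_D$. You supply more detail than the paper---in particular the explicit verification that oddness is preserved and the remarks on interchanging sums with $\partial_t$ and on uniqueness---but the route is the same.
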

\begin{proof}
The proof follows the same procedure as in the continuous case, starting from the problem defined on $\Omega_h$ and by constructing an odd extension for the initial condition.
 Let us consider  an odd reflection $g $ of    the initial data $f $  as a function defined for any $x\in \Omega_h$ as follows
\begin{equation}\label{eq:odd_reflection_g} g(x) = 
	\begin{cases}
		f(x), &   x\in \Omega_h^+,\\
		-f(-x), &    x\in \Omega_h^{-},\\
		0, &  x=0.
	\end{cases}
\end{equation}
Then $\tilde{u}(t,x)$  given by \eqref{eq:HeatSolution_Omega_h} is a solution of \eqref{eq:heat_equation_Omega_h}   in $\Omega_h$, with initial condition $g$ given by \eqref{eq:odd_reflection_g}.  By a little algebra we get that for any $(t,x)\in [0,T]\times\Omega_{h,0}^+$ $w(t,x) = \tilde{u}(t,x)|_{x\in \Omega_{h,0}^+}$  satisfies the problem  \eqref{eq:heat_equation_Omega_h_+} and it admits the representation \eqref{eq:HeatSolution_Omega_h_+}-\eqref{eq:Gd}. Indeed, for any $(t,x)\in [0,T]\times\Omega_{h}^+$,
\begin{eqnarray*}
   w(t,x)  & = &\tilde{u}(t,x) = h\sum_{z \in \Omega_h} H_D(t,x-z)g(z) \\& = & h\sum_{z \in \Omega_h^-} H_D(t,x-z)g(z) + h\sum_{z \in \Omega_h^+} H_D(t,x-z)g(z) \\
	& =&h\sum_{z \in \Omega_h^-} -H_D(t,x-z)f(-z) +h\sum_{z \in \Omega_h^+} H_D(t,x-z)f(z) \\
    & = &h\sum_{z \in \Omega_h^+} \left( H_D(t,x-z) - H_D(t,x+z) \right) f(z).
\end{eqnarray*}
Furthermore,  for any $x\in\Omega_{h,0}^+$, $w(0,x)   =  \tilde{u}(0,x)=g(x)=f(x)$ and for any $t\in (0,T]$, $w(t,0) =0,$ for the symmetry of $H_D(t,\cdot)$.
\end{proof}
Finally, we may obtain a representation of the solution of the random initial boundary value problem \eqref{eq:discreteheatequation} for the space-discrete heat equation.
\begin{proposition}\label{prop:solution_random_system}
The function $u$ on $ [0,T]\times\Omega^+_{h,0}$ defined,  as   
\begin{equation}\label{eq:solution_random_heat_equation_HD}
\begin{split}
    u(t,x) &= -\int_0^t (\dmen_{h}+\dpiu_{h}) H_D(t-s,x)\psi(s) ds,  \quad (t,x)\in (0,T]\times \Omega_h^+,\\
    u(t,0)&=\psi(t), \hspace{5.2cm} t\in [0,T],
    \end{split}
\end{equation}
is a solution to  the random heat equation \eqref{eq:discreteheatequation}, with     $\psi$ as in  \eqref{eq:regularity_boundary_process} and such that $ \psi(0)=0$. 
\end{proposition}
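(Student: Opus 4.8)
\emph{Proof strategy.}\quad The plan is to verify directly that the closed form \eqref{eq:solution_random_heat_equation_HD} meets the three requirements in \eqref{eq:discreteheatequation}, the only substantial one being the discrete heat equation at the interior nodes. First I would extend the integral formula to the whole lattice, setting, for every $x\in\Omega_h$,
\[
\tilde u(t,x):=-\int_0^t (\dmen_h+\dpiu_h)H_D(t-s,x)\,\psi(s)\,ds ,
\]
where the finite differences act on the space variable. For fixed $h$ this is well defined and of class $C^1$ in $t$: by \eqref{eq:H_D} the map $r\mapsto H_D(r,x)$, together with all its finite differences and with its time derivative $\Delta_h H_D(r,x)$, is a bounded continuous function of $r\ge 0$, so that the integrand and its $t$-derivative are bounded and continuous on $\{0\le s\le t\le T\}$ and the classical Leibniz rule for integrals with variable endpoint applies. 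By construction $u=\tilde u$ on $(0,T]\times\Omega_h^+$, while $u(t,0)=\psi_t$ holds by definition, so the boundary condition is trivial; and $\tilde u(0,x)=0$ for every $x\in\Omega_h$, which gives the null initial datum on $\Omega_h^+$ (it is also consistent at $x=0$ thanks to $\psi(0)=0$). It thus remains to check $\partial_t u=\Delta_h u$ on $(0,T]\times\Omega_h^+$.

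Next I would differentiate $\tilde u$ in time. Using $\partial_r H_D=\Delta_h H_D$ from \eqref{eq:discreteheatkernel}, the commutativity of $\Delta_h$ with $\dmen_h,\dpiu_h$, and $H_D(0,\cdot)=\delta_0$, the Leibniz rule yields
\[
\partial_t \tilde u(t,x)=\Delta_h\tilde u(t,x)-\bigl[(\dmen_h+\dpiu_h)\delta_0\bigr](x)\,\psi_t ,\qquad x\in\Omega_h .
\]
A direct computation of the ``dipole'' term, using $\delta_0(x)=\tfrac1h\mathbf 1_{\{x=0\}}$ and $(\dmen_h+\dpiu_h)\delta_0(x)=\tfrac1h\bigl(\delta_0(x+h)-\delta_0(x-h)\bigr)$, shows that this term vanishes at $x=0$ and for all $x$ with $|x|\ge 2h$, and equals $-1/h^2$ at $x=h$. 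Restricting to $\Omega_h^+$ we therefore get $\partial_t\tilde u(t,x)=\Delta_h\tilde u(t,x)$ for $x\ge 2h$, and $\partial_t\tilde u(t,h)=\Delta_h\tilde u(t,h)+\psi_t/h^2$.

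The final step is to pass from $\tilde u$ to $u$, the only discrepancy between the two being the value at $x=0$. Here I would invoke the evenness of $H_D(r,\cdot)$ — immediate from \eqref{eq:H_D} — which gives
\[
\tilde u(t,0)=-\frac1h\int_0^t\bigl(H_D(t-s,h)-H_D(t-s,-h)\bigr)\psi(s)\,ds=0 .
\]
Consequently, for $x\ge 2h$ the stencil of $\Delta_h u(t,x)$ involves only lattice points $\ge h$, where $u=\tilde u$, hence $\Delta_h u(t,x)=\Delta_h\tilde u(t,x)=\partial_t\tilde u(t,x)=\partial_t u(t,x)$; while at $x=h$,
\[
\Delta_h u(t,h)=\frac{u(t,0)-2u(t,h)+u(t,2h)}{h^2}=\Delta_h\tilde u(t,h)+\frac{\psi_t-\tilde u(t,0)}{h^2}=\Delta_h\tilde u(t,h)+\frac{\psi_t}{h^2}=\partial_t u(t,h),
\]
which closes the verification.

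I expect no serious obstacle here: every manipulation is carried out at fixed $h$ and involves only bounded continuous functions of time, so the sole property of $\psi$ actually used is continuity (guaranteed by \eqref{eq:regularity_boundary_process}), the Hölder regularity being relevant only for the $h$-uniform estimates in the following subsection. The one point requiring care is exactly the bookkeeping in the last two paragraphs: the extension $\tilde u$ of the boundary-integral formula to all of $\Omega_h$ equals $0$, not $\psi_t$, at $x=0$, and the imposed boundary value $\psi_t$ at $x=0$ is precisely what is needed to absorb the dipole contribution $\psi_t/h^2$ produced by differentiating the boundary integral at the first interior node $x=h$.
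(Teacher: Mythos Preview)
Your argument is correct and takes a genuinely different route from the paper. You verify the formula directly: extend it to all of $\Omega_h$, differentiate under the integral sign, identify the boundary ``dipole'' $(\dmen_h+\dpiu_h)\delta_0$ produced by the Leibniz endpoint term, and then show that imposing $u(t,0)=\psi_t$ (in place of $\tilde u(t,0)=0$, which you obtain from the evenness of $H_D$) absorbs exactly that contribution at the first interior node $x=h$. This is a clean pointwise verification that uses nothing beyond continuity of $\psi$ and the explicit structure of the discrete kernel.

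The paper proceeds instead via a Green's-identity argument: it tests a solution $u$ of \eqref{eq:discreteheatequation} against the Dirichlet kernel $G_D(t-s,x,\cdot)$, integrates by parts in time and (discretely) in space, and reads off the representation \eqref{eq:solution_random_heat_equation_HD} from the boundary terms. Strictly speaking that argument runs in the converse direction---it shows that any solution must have this form---so it relies implicitly on existence or on the reversibility of the manipulations to conclude that the formula is a solution. Your direct verification avoids this logical detour and proves exactly the stated implication; the paper's approach, on the other hand, makes the origin of the formula transparent and yields uniqueness as a by-product.
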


\begin{proof}
Since $u(t,x)$ is the solution to  \eqref{eq:discreteheatequation}, let us consider the weak formulation of the solution by taking as a test function, for any $(t,x)\in [0,T]\times\Omega_h^+$,  the kernel $ G_D(t-s,x,\cdot)$, as follows
\begin{align*}
	0 & = \int_0^t \int_{\Omega_h^+} \left[\partial_s G_D(t-s,x,y)+\dpiu_{h,y}\dmen_{h,y} G_D(t-s,x,y)\right]u(s,y)dyds \\
	& = \int_0^t \int_{\Omega_h^+} u(s,y)\partial_s G_D(t-s,x,y)dyds + \int_0^t\int_{\Omega_h^+} u(s,y) \dpiu_{h,y}\dmen_{h,y} G_D(t-s,x,y)dyds\\
	& = I_1 +  I_2
\end{align*}
Let us note that we have  introduced in the difference derivatives a dependence from the space variable to trace that the differences are done along the second spatial argument $y$, while $x$ is fixed.
 By the integration  by parts and for the initial condition in  \eqref{eq:discreteheatequation}, we get
\begin{eqnarray*}
	I_1 & = &\int_0^t \int_{\Omega_h^+} u(s,y)\partial_s G_D(t-s,x,y)dyds \\
    & = &\int_{\Omega_h^+}  \left( u(t,y)G_D(0,x,y) -  u(0,y)G_D(t,x,y) \right)dy -\int_0^t\int_h^{\infty}\partial_s u(s,y)G_D(t-s,x,y)dyds\\
	& =& u(t,x) -\int_0^t\int_{\Omega_h^+} \partial_s u(s,y)G_D(t-s,x,y)dyds.
\end{eqnarray*}
Again by integrating by parts  twice as in \eqref{eq:chain_rule_D-D+}   we get   
\begin{align*}
	I_2 & = \int_0^t\int_{\Omega_h^+} u(s,y) \dmen_{h,y} \dpiu_{h,y} G_D(t-s,x,y)dyds	\\
	& = -\int_0^t \psi(s) \dpiu_{h,y} G_D(t-s,x,0) ds + \int_0^t\int_h^{+\infty} \dmen_{h,y}\dpiu_{h,y} u(s,y)G_D(t-s,x,y)dyds\\
    &=    \int_0^t \psi(s) \left(\dmen_{h,x}+\dpiu_{h,x}\right) H_D(t,x) ds + \int_0^t\int_h^{+\infty} \dmen_{h,y}\dpiu_{h,y} u(s,y)G_D(t-s,x,y)dyds,   
\end{align*}
where the last equality is due to the fact that a straightforward calculation leads to   $$   \dpiu_{h,y} G_D (t,x,y)  =   -\dmen_{h,x}H_D(t,x-y) - \dpiu_{h,x} H_D(t,x+y).
$$
Then,
\begin{eqnarray*}
	0 = I_1+I_2 & =&u(t,x)  -\int_0^t\int_h^{+\infty}\partial_s u(s,y)G_D(t-s,x,y) dyds \\
	& & + \int_0^t \psi(s) \left(\dmen_{h,x}+\dpiu_{h,x}\right) H_D(t,x) ds\\
	& &  + \int_0^t\int_h^{+\infty} \dmen_{h,y}\dpiu_{h,y} u(s,y)G_D(t-s,x,y)dyds \\
	& = &u(t,x) + \int_0^t \psi(s) \left(\dmen_{h,x}+\dpiu_{h,x}\right) H_D(t,x) ds. 
\end{eqnarray*} 
 The thesis is achieved.
\end{proof}

\subsection{A priori bounds for the space-discrete heat equation}

We start this section with a useful proposition on $L^{\infty}$ bounds of the function $u$ solution to equation \eqref{eq:discreteheatequation}.
 
\begin{proposition}\label{proposition:heatbounds}
    Suppose that $u$ is solution to equation \eqref{eq:discreteheatequation}, then 
    \[\| u \|_{L^{\infty}([0,T] \times \Omega_h^+)} \leq \|\psi \|_{L^{\infty}([0,T])}.\]
\end{proposition}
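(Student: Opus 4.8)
The natural approach is a discrete maximum principle argument for the random heat equation \eqref{eq:discreteheatequation}. The plan is to show that the solution $u$ cannot exceed the supremum of its boundary data $\psi$ (and, by symmetry, cannot go below its infimum, which combined with $u(t,0)=\psi_t\geq 0$ and zero initial data gives the two-sided bound; but the stated inequality only concerns the absolute value, so both directions reduce to controlling $|u|$ by $\|\psi\|_{L^\infty}$). First I would fix $\omega\in\Omega$ so that the boundary path $\psi_\cdot(\omega)$ is a fixed continuous function, and set $M:=\|\psi\|_{L^\infty([0,T])}$; since the problem is linear and the bound is pathwise, it suffices to prove $|u(t,x)|\leq M$ for every fixed realization.

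The cleanest route uses the explicit nonnegative-kernel representation rather than a direct comparison argument. From Proposition \ref{lemma1} (with zero initial data here) together with Proposition \ref{prop:solution_random_system}, one knows $u$ solves the problem; the key structural fact I would exploit is that for $x\in\Omega_h^+$ one can write $u(t,x)$ as the ``harmonic lifting'' of the boundary data through the kernel $G_D$ on $\Omega_h^+$. Concretely, the function $\bar u(t,x):=M$ (constant in space, with $\bar u(0,\cdot)=M$ on $\Omega_h^+$ rather than $0$) is a supersolution: $\partial_t\bar u=\Delta_h\bar u=0$ and $\bar u(t,0)=M\geq\psi_t$, $\bar u(0,x)=M\geq 0=u(0,x)$. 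Then $w:=\bar u-u$ satisfies $\partial_t w=\Delta_h w$ on $(0,T]\times\Omega_h^+$ with $w(0,\cdot)\geq 0$ and $w(t,0)\geq 0$, so by the discrete parabolic maximum principle (which for the uniform Markov chain $X_t$ of Remark \ref{process_laplacian} is just the Feynman–Kac positivity: $w(t,x)=\mathbb E_x[\,w(0,X_{t\wedge\sigma})\mathbf 1_{\{\sigma>t\}}+w(t-\sigma,0)\mathbf 1_{\{\sigma\leq t\}}\,]\geq 0$, where $\sigma$ is the first hitting time of $0$) we get $u\leq M$ on the lattice. Applying the same argument to $-u$ (with the same constant supersolution $M$, using $-\psi_t\leq M$ and $-u(0,\cdot)=0\leq M$) gives $-u\leq M$, hence $|u(t,x)|\leq M$ for all $(t,x)\in[0,T]\times\Omega_h^+$; the boundary point $x=0$ is immediate since $u(t,0)=\psi_t$.

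Alternatively — and this is probably the most economical write-up given what is already available — one can bound $u$ directly from the semigroup representation \eqref{eq:HeatSolution_Omega_h_+}. Writing $u(t,x)=\sum_{y}$-type convolution is not directly available for the inhomogeneous boundary problem, so I would instead use Duhamel/Feynman–Kac on $\Omega_h^+$ with absorption at $0$: $u(t,x)=\mathbb E_x\big[\psi_{t-\sigma}\,\mathbf 1_{\{\sigma\leq t\}}\big]$ where $X$ is the uniform Markov chain and $\sigma$ its hitting time of $0$ started from $x$ (this uses $u(0,\cdot)=0$). Then immediately $|u(t,x)|\leq\mathbb E_x\big[|\psi_{t-\sigma}|\,\mathbf 1_{\{\sigma\leq t\}}\big]\leq\|\psi\|_{L^\infty([0,T])}\,\mathbb P_x(\sigma\leq t)\leq\|\psi\|_{L^\infty([0,T])}$, which is exactly the claim.

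\textbf{Main obstacle.} The only genuine point requiring care is justifying the probabilistic (Feynman–Kac) representation of the solution to the absorbing boundary problem on the lattice, i.e. that the $u$ of \eqref{eq:solution_random_heat_equation_HD} coincides with $\mathbb E_x[\psi_{t-\sigma}\mathbf 1_{\{\sigma\leq t\}}]$; this is the content of the Feynman–Kac formula for the time-continuous lattice process whose proof the authors defer to Appendix A, so in the body I would simply invoke it (or, to stay self-contained, use the constant-supersolution comparison argument above, which needs only the elementary discrete maximum principle and no stochastic representation). A secondary but routine point is the measurability/pathwise nature of the estimate: since everything is done $\omega$ by $\omega$ and the bound is uniform in $\omega$ in terms of $\|\psi\|_{L^\infty([0,T])}$, no additional integrability input is needed.
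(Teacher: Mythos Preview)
Your proposal is correct, and your second (Feynman--Kac) route is exactly the paper's proof: the authors invoke the probabilistic representation $u(t,x)=\mathbb{E}[\psi_{\tau^x}\mathbf{1}_{\tau^x\le t}]$ from the appendix and conclude by monotonicity of expectation in one line. Your first route via the constant supersolution $M$ and the discrete parabolic comparison principle is a legitimate self-contained alternative that avoids the appeal to Appendix~A; it buys independence from the stochastic-representation machinery at the modest cost of writing out the elementary maximum-principle step, while the paper's choice keeps the argument uniform with the later bound on $\tilde s$ (Proposition~\ref{proposition:sbounds}), which genuinely needs the Feynman--Kac formula.
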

\begin{proof}
By Lemma \ref{lemma:FK}, the unique solution to equation \eqref{eq:discreteheatequation} has the following probabilistic representation 
\[u(t,x)=\mathbb{E}\left[\psi(X^x_{\tau^x}) \mathbb{I}_{\tau^x \leq t} \right],\]
where $X_t^x$ is the stochastic process solving the martingale problem associated with the discrete Laplace operator, introduced in Remark \ref{process_laplacian}, such that $X^x_0=x$, and $\tau^x$ is the hitting time of the process $X^x_t$ with respect to the set $\Omega_h \backslash \Omega_h^+$. The statement follows directly from the monotonicity of expectation.
\end{proof}

Next step is to establish a bound for the norm of $u$ in specific Besov spaces, so that we gain also bounds for the forward and backward derivatives  $\dpiu_h u$ and  $\dmen_h u$. We first need to consider some regularization properties of the discrete heat kernel. The following result allows us to gain  regularity in space of order $2m, m>0$, whenever  the
the heat semigroup is applied  at time $t$,  with a  payoff of order $t^{-m}$ .
  
\begin{lemma}[Heat semigroup regularization]\label{lem:heat_kernel_regu}
    Let  $ p,q \in [1,+\infty]$, $\alpha>0$,  and  $g\in B_{p,q}^{\alpha}(\Omega_h)$. Then, if $m>0$, for every $t>0$
    \begin{equation}\label{eq:semigroup_estimate_besov}
      \left\|e^{\Delta_h t}g \right\|_{B_{p,q}^{\alpha+2m}(\Omega_h)}  \le  C t^{-m} \left\|g\right\|_{B_{p,q}^\alpha(\Omega_h)},
    \end{equation}
  where  $e^{\Delta_h t}$ denotes the discrete heat semigroup  \eqref{eq:heat_semigroup_exp}.
\end{lemma}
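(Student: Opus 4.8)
The plan is to work on the Fourier side over the torus $\mathbb{T}_h$, exactly as in the continuum theory of Besov spaces, reducing the statement to a multiplier estimate on the Littlewood--Paley blocks. Writing $m_t(\xi) = e^{-\frac{4}{h^2}\sin^2(h\xi/2)\,t}$ for the Fourier symbol of $e^{\Delta_h t}$ (from Proposition \ref{prop:fundamental_heat_solution}), the core of the argument is the elementary inequality $|\xi|^2 \lesssim \frac{4}{h^2}\sin^2(h\xi/2) \lesssim |\xi|^2$ valid for $\xi \in \mathbb{T}_h = [-\pi/h,\pi/h]$, i.e. the symbol of $-\Delta_h$ is comparable to $|\xi|^2$ uniformly in $h$ on the torus. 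Consequently, on the dyadic annulus $2^{j-1} \lesssim |\xi| \lesssim 2^{j+1}$ (the support of $\varphi_j^h$) one has $m_t(\xi) \le e^{-c\,2^{2j} t}$ for some $c>0$ independent of $h$ and $j$.

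First I would establish the block-wise bound: for each $-1 \le j \le J_h$,
\begin{equation*}
  \left\| \Delta_j\left(e^{\Delta_h t} g\right) \right\|_{L^p(\Omega_h)} \;\le\; C\, e^{-c\,2^{2j} t}\, \left\| \Delta_j g \right\|_{L^p(\Omega_h)}.
\end{equation*}
This follows from a discrete Bernstein-type / Young's inequality argument: $\Delta_j(e^{\Delta_h t}g) = \mathcal{F}_h^{-1}(\varphi_j^h m_t)\, *\, \Delta_j g$ (using that $\varphi_j^h$ is essentially idempotent against a slightly fattened cutoff $\tilde\varphi_j^h$), and one bounds the $L^1(\Omega_h)$ norm of the kernel $\mathcal{F}_h^{-1}(\tilde\varphi_j^h m_t)$ by a constant uniformly in $j,t,h$ — this is the standard estimate that rescaled, compactly Fourier-supported functions with symbols bounded in $C^N$ have uniformly bounded $L^1$ mass. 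Multiplying by $2^{j(\alpha+2m)}$, summing the $q$-th powers over $j$, and using $2^{2jm} e^{-c2^{2j}t} \le C_m t^{-m}$ (the elementary sup bound $\sup_{r\ge 0} r^m e^{-cr} < \infty$ applied with $r = 2^{2j}t$), I pull the factor $C t^{-m}$ out of the sum and recognize the remaining $\ell^q$ sum of $2^{j\alpha}\|\Delta_j g\|_{L^p}$ as $\|g\|_{B^\alpha_{p,q}(\Omega_h)}$. The endpoint cases $p$ or $q = \infty$ are handled by the usual modifications (sup in place of sum).

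The main obstacle — and the point where care is genuinely needed — is making the kernel bound \emph{uniform in $h$}. The symbol $m_t$ and the cutoffs $\varphi_j^h$ both depend on $h$, and the truncation at the top frequency scale $J_h$ (where $\varphi_{J_h}^h = 1 - \sum_{j<J_h}\varphi_j$ differs from the continuum $\varphi_{J_h}$) must be controlled. Here I would invoke that this is precisely the discrete heat-kernel regularization estimate proved in \cite{devecchi_nicolay_2021elliptic}; the comparability $\frac{4}{h^2}\sin^2(h\xi/2) \asymp |\xi|^2$ on $\mathbb{T}_h$ and the fact that all derivatives of $m_t$ restricted to a dyadic annulus are controlled (after rescaling $\xi \mapsto 2^{-j}\xi$) by constants depending only on $t2^{2j}$ in a way compatible with the claimed decay, are what make the constant $C$ in \eqref{eq:semigroup_estimate_besov} independent of $h$. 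A clean way to present this is to note that the estimate is the exact lattice analogue of the classical result $\|e^{t\Delta}g\|_{B^{\alpha+2m}_{p,q}(\mathbb{R})} \lesssim t^{-m}\|g\|_{B^\alpha_{p,q}(\mathbb{R})}$, and that the proof in \cite{devecchi_nicolay_2021elliptic} establishes it with $h$-uniform constants; I would cite that and fill in only the two elementary ingredients (symbol comparability and the scalar bound $\sup_r r^m e^{-cr}<\infty$) explicitly.
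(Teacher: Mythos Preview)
Your proposal is correct and follows essentially the same approach as the paper: the paper's own proof consists solely of a citation to \cite{devecchi_nicolay_2021elliptic,2022_Turra} (in particular Lemma~2.10 of the former), noting that the lattice case is proved along the same lines as the continuum one. Your plan spells out precisely that argument---Fourier multiplier analysis using the symbol comparability $\tfrac{4}{h^2}\sin^2(h\xi/2)\asymp|\xi|^2$ on $\mathbb{T}_h$, the block-wise kernel bound, and the elementary inequality $2^{2jm}e^{-c2^{2j}t}\le C_m t^{-m}$---and even identifies the $h$-uniformity of the constants as the point handled by the cited reference, so there is no discrepancy.
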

\begin{proof}
        The semi-discrete case on $B_{p,q}^{s}(\Omega_h)$ can be proven by following the same line of the continuous case $B_{p,q}^{s}(\R^d)$ in \cite{devecchi_nicolay_2021elliptic,2022_Turra}, in particular, by using Lemma 2.10 in \cite{devecchi_nicolay_2021elliptic}.
\end{proof}
 
\begin{lemma}[Dirac-delta semigroup regularization]\label{lem:stime_delta_dirac_semigroup_regularization}
    Let  $p\in[1,+\infty]$, $k>1$ and $ \alpha<1/p-1$. Then, for every $t,\tau >0$ and $\theta \in (0,1)$, the following estimate holds
    \begin{equation}\label{eq:lemma_semigroup _delta_dirac}
        \left\|\int_t^{t+\tau} e^{\Delta_h s}\delta_0ds\right\|_{B^{\alpha+k+1}_{p,p}(\Omega_h^+)} \le C \,\tau^\theta\,  t^{-\left( {\theta\frac{k+1}{2} + (1-\theta)\frac{k-1}{2}}\right)}.
    \end{equation}
\end{lemma}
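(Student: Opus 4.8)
The plan is to interpolate between two extreme estimates obtained by combining the Dirac-delta Besov bound of Proposition~\ref{prop:Delta_in_Besov} with the heat-semigroup regularization of Lemma~\ref{lem:heat_kernel_regu}. First I would fix $p$, $k>1$ and $\alpha<1/p-1$, and record that by Proposition~\ref{prop:Delta_in_Besov} we have $\delta_0\in B^{\alpha}_{p,p}(\Omega_h^+)$ with a norm bounded uniformly in $h$. Then, using the semigroup property $e^{\Delta_h s}=e^{\Delta_h (s/2)}e^{\Delta_h(s/2)}$ together with Lemma~\ref{lem:heat_kernel_regu} (applied with $2m=k+1$, resp.\ $2m=k-1$), I would get that for each $s>0$
\begin{equation*}
\left\|e^{\Delta_h s}\delta_0\right\|_{B^{\alpha+k+1}_{p,p}(\Omega_h^+)}\le C\,s^{-\frac{k+1}{2}},\qquad
\left\|e^{\Delta_h s}\delta_0\right\|_{B^{\alpha+k-1}_{p,p}(\Omega_h^+)}\le C\,s^{-\frac{k-1}{2}},
\end{equation*}
where the constants do not depend on $h$. (One must be mildly careful that the reflection to $\Omega_h^+$ is compatible with the semigroup estimate; this is routine since $G_D$ is built from $H_D$ by odd reflection and the Besov norms on $\Omega_h^+$ are controlled by those on $\Omega_h$.)

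Next I would integrate in time over $[t,t+\tau]$. For the stronger norm, $\int_t^{t+\tau}s^{-\frac{k+1}{2}}ds\le \tau\, t^{-\frac{k+1}{2}}$ (using that $s\ge t$ on the interval), giving the bound with the ``$\theta=1$'' exponents; for the weaker norm the same crude estimate gives $\int_t^{t+\tau}s^{-\frac{k-1}{2}}ds\le \tau\, t^{-\frac{k-1}{2}}$, the ``$\theta=0$'' case. Hence
\begin{equation*}
\left\|\int_t^{t+\tau}e^{\Delta_h s}\delta_0\,ds\right\|_{B^{\alpha+k+1}_{p,p}}\le C\,\tau\,t^{-\frac{k+1}{2}},\qquad
\left\|\int_t^{t+\tau}e^{\Delta_h s}\delta_0\,ds\right\|_{B^{\alpha+k-1}_{p,p}}\le C\,\tau\,t^{-\frac{k-1}{2}}.
\end{equation*}
The final step is an interpolation inequality for Besov spaces: $B^{\alpha+k}_{p,p}$ sits between $B^{\alpha+k-1}_{p,p}$ and $B^{\alpha+k+1}_{p,p}$ with weights $(1-\theta,\theta)$ precisely when the target smoothness is $(\alpha+k-1)(1-\theta)+(\alpha+k+1)\theta=\alpha+k-1+2\theta$; choosing $\theta$ appropriately (the statement is written for the one-parameter family of targets, so one reads off the interpolation weight from the desired index) and applying $\|f\|_{B^{\alpha+k}_{p,p}}\le \|f\|_{B^{\alpha+k-1}_{p,p}}^{1-\theta}\|f\|_{B^{\alpha+k+1}_{p,p}}^{\theta}$ yields
\begin{equation*}
\left\|\int_t^{t+\tau}e^{\Delta_h s}\delta_0\,ds\right\|_{B^{\alpha+k+1}_{p,p}(\Omega_h^+)}\le C\,\tau^{\theta}\tau^{1-\theta}\,t^{-\left(\theta\frac{k+1}{2}+(1-\theta)\frac{k-1}{2}\right)}
= C\,\tau\, t^{-\left(\theta\frac{k+1}{2}+(1-\theta)\frac{k-1}{2}\right)}.
\end{equation*}
To recover the stated $\tau^{\theta}$ (rather than $\tau$) one instead keeps one of the two time-integrals sharp, estimating $\int_t^{t+\tau}s^{-\frac{k+1}{2}}ds$ by $\tau^{\theta}$ times a power of $\tau$ absorbed suitably, or more cleanly splits the $\tau$-factor via interpolation on the integral itself — in any case the exponent of $\tau$ follows from matching the two endpoint integrals.

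I expect the main obstacle to be bookkeeping the exponents so that the final powers of $\tau$ and $t$ match the statement exactly, and in particular making sure the interpolation weight $\theta$ is the one compatible with writing the target index as claimed; the two functional-analytic inputs (the $\delta_0$ Besov membership and the semigroup smoothing) are already available, so no genuinely new estimate is needed. A secondary point requiring a line of justification is that all constants can be taken uniform in $h$, which follows because the constant in Lemma~\ref{lem:heat_kernel_regu} and in Proposition~\ref{prop:Delta_in_Besov} are $h$-independent (the latter because the $h$-factor in $\|\Delta_j\delta_0\|_{L^p}$ cancels against the normalization $\delta_0=\frac1h\mathbf 1_0$).
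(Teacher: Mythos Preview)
Your Besov-space interpolation does not reach the target space. The statement asks for a bound in $B^{\alpha+k+1}_{p,p}$, but interpolating between $B^{\alpha+k-1}_{p,p}$ and $B^{\alpha+k+1}_{p,p}$ with weights $(1-\theta,\theta)$ lands you in $B^{\alpha+k-1+2\theta}_{p,p}$, which equals $B^{\alpha+k+1}_{p,p}$ only when $\theta=1$ --- precisely the endpoint that gives $\tau^1$ and no gain. (In your displayed interpolation line you even write $\|f\|_{B^{\alpha+k}_{p,p}}$ on the left and then record the conclusion in $B^{\alpha+k+1}_{p,p}$, which is inconsistent.) Your closing remark about ``interpolation on the integral itself'' is the right instinct but is not carried out, and the crucial second bound on the integral is never identified.

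The paper's proof stays in the single space $B^{\alpha+k+1}_{p,p}$ throughout. From $\delta_0\in B^{\alpha}_{p,p}$ and the semigroup estimate with $m=(k+1)/2$ one obtains
\[
\left\|\int_t^{t+\tau} e^{\Delta_h s}\delta_0\,ds\right\|_{B^{\alpha+k+1}_{p,p}(\Omega_h^+)}\le C\int_t^{t+\tau} s^{-\frac{k+1}{2}}\,ds.
\]
This \emph{scalar} integral is then bounded in two independent ways: crudely by $\tau\,t^{-(k+1)/2}$, and (since $k>1$) by computing the antiderivative,
\[
\int_t^{t+\tau} s^{-(k+1)/2}\,ds=\frac{2}{k-1}\left(t^{-(k-1)/2}-(t+\tau)^{-(k-1)/2}\right)\le \frac{2}{k-1}\,t^{-(k-1)/2}.
\]
The elementary geometric interpolation $I= I^\theta I^{1-\theta}\le\big(\tau\, t^{-(k+1)/2}\big)^{\theta}\big(C\,t^{-(k-1)/2}\big)^{1-\theta}$ then yields exactly the stated estimate. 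The ingredient missing from your argument is this second scalar bound; without it you cannot separate the $\tau^\theta$ factor from the $t$-power while remaining in the correct Besov space.
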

\begin{proof}
 Note that   since $e^{\Delta_h s} \delta_0$ is an even function,  the Besov norms on  $\Omega_h$ and on   $\Omega_h^+$, respectively, are equivalent. So from \eqref{eq:semigroup_estimate_besov}, with $m=(k+1)/2$ 
   from   Proposition \ref{prop:Delta_in_Besov} $\delta_0 \in B^\alpha_{p,p}(\Omega_h^+), $ we get
\begin{align*}
    \left\| \int_t^{t+\tau} e^{\Delta_h s}\delta_0 \, ds\right\|_{B^{\alpha+k+1}_{p,p}(\Omega_h^+)} & \le \int_t^{t+\tau} ||e^{\Delta_h s} \delta_0 ||_{B^{\alpha+k+1}_{p,p}(\Omega_h^+)} ds {\le} \,   C  \int_t^{t+\tau} {s^{-\frac{k+1}{2}}} ds. 
\end{align*}
Since the integral is less than both $\tau t^{-(k+1)/2}$ and, for $k>1$, $ \frac{2}{k-1}t^{-(k-1)/2}$, by a geometric interpolation we may write, for $\theta \in (0,1)$
\begin{eqnarray*}
    \int_t^{t+\tau} s^{-\frac{k+1}{2}} ds &\le&  \left(\tau t^{-(k+1)/2} \right)^\theta \left(  \frac{2}{k-1}t^{-(k-1)/2}\right)^{1-\theta}\le  C \,\tau^\theta\,  t^{-\left( {\theta\frac{k+1}{2} + (1-\theta)\frac{k-1}{2}}\right)} 
\end{eqnarray*}
  and by inserting the latter estimate in the previous expression we get the claim.
\end{proof}
\begin{lemma}[Forward difference Dirac-delta semigroup regularization]\label{lem:forward_difference_dirac_besov}
        Let  $p\in[1,+\infty]$ and $ \beta\in (1/4,1/2)$.  Furthermore, let $ \alpha<1/p-1$ and $k\in (1,3/2)$.   Then, for any $t,s>0$, \begin{equation}\label{eq:erence_dirac_besov}
        \begin{split}
        \dpiu_h e^{\Delta (t-\cdot)}\delta_0,  \dmen_h e^{\Delta (t-\cdot)}\delta_0 &\in B_{1,1}^{-\beta}([0,t],B^{\alpha+k}_{p,p}(\R_+)); \\
        \int_0^s \dpiu_h e^{\Delta_h (t-\tau)}\delta_0 d\tau,\int_0^s \dmen_h e^{\Delta_h (t-\tau)}\delta_0 d\tau &\in B_{1,1}^{1-\beta}([0,t],B^{\alpha+k}_{p,p}(\Omega_h^+)).
        \end{split}
        \end{equation}
\end{lemma}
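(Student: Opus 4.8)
The strategy is to reduce the two claims in \eqref{eq:erence_dirac_besov} to the scalar time-integral estimate of Lemma \ref{lem:stime_delta_dirac_semigroup_regularization} via the difference characterization of the time-Besov norm (Proposition \ref{difference_characterization}), together with a uniform-in-$h$ bound on the action of the discrete difference operators $\dpiu_h,\dmen_h$ on Besov scales. Concretely, since $\dpiu_h$ and $\dmen_h$ are Fourier multipliers on $\Omega_h$ with symbols $(e^{ih\xi}-1)/h$ and $(1-e^{-ih\xi})/h$, whose modulus is bounded by $|\xi|$ (up to a constant) uniformly in $h$ on $\mathbb T_h$, one has the bound $\|\dpiu_h g\|_{B^{\sigma}_{p,p}(\Omega_h)}\le C\|g\|_{B^{\sigma+1}_{p,p}(\Omega_h)}$, and likewise for $\dmen_h$, with $C$ independent of $h$; this is the discrete analogue of ``a derivative costs one order of regularity'' and follows from the Littlewood-Paley characterization exactly as in \cite{devecchi_nicolay_2021elliptic}. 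Combining this with Lemma \ref{lem:heat_kernel_regu} and the already established fact $\delta_0\in B^{\alpha}_{p,p}(\Omega_h^+)$ (Proposition \ref{prop:Delta_in_Besov}, valid since $\alpha<1/p-1$), we get for the single semigroup the pointwise-in-time estimate
\begin{equation*}
\big\|\dpiu_h e^{\Delta_h r}\delta_0\big\|_{B^{\alpha+k}_{p,p}(\Omega_h^+)}\le C\,\big\|e^{\Delta_h r}\delta_0\big\|_{B^{\alpha+k+1}_{p,p}(\Omega_h^+)}\le C\, r^{-\frac{k+1}{2}},
\end{equation*}
and the analogue for the time-integrated kernel $\int_0^s \dpiu_h e^{\Delta_h(t-\tau)}\delta_0\,d\tau$ from Lemma \ref{lem:stime_delta_dirac_semigroup_regularization}.

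\textbf{Verifying the two membership claims.} For the first line of \eqref{eq:erence_dirac_besov}, I apply Proposition \ref{difference_characterization} with $r=1$, $\bar\alpha=-\beta$ (note $-\beta<0$, so the time-Besov norm is a genuine negative-regularity norm and the ``difference'' part need only be controlled by the $L^1_t$ norm of the function itself, not of an increment). The $L^1\big([0,t],B^{\alpha+k}_{p,p}\big)$ part is finite because $\int_0^t (t-r)^{-(k+1)/2}\,dr<\infty$ requires $(k+1)/2<1$, i.e. $k<1$; since here $k\in(1,3/2)$ this naive bound fails, and this is exactly the point where the \emph{negative} time-regularity $-\beta$ is used: one trades time-integrability for time-smoothness, writing $\dpiu_h e^{\Delta_h(t-\cdot)}\delta_0$ as (a fractional time-derivative applied to) the \emph{time-integrated} kernel, which by the second half of Lemma \ref{lem:stime_delta_dirac_semigroup_regularization} lies in $B^{1-\beta}_{1,1}([0,t],B^{\alpha+k}_{p,p})$, and then differentiating in time once costs one time-order, landing in $B^{-\beta}_{1,1}$ provided $1-\beta-1=-\beta$, which is consistent. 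For the second line, $\bar\alpha=1-\beta\in(1/2,3/4)$ is positive, so I must control the increment $\big\|\int_0^{s+\sigma}-\int_0^{s}\big\|_{B^{\alpha+k}_{p,p}}=\big\|\int_s^{s+\sigma}\dpiu_h e^{\Delta_h(t-\tau)}\delta_0\,d\tau\big\|_{B^{\alpha+k}_{p,p}}$, and this is precisely what Lemma \ref{lem:stime_delta_dirac_semigroup_regularization} (after applying the uniform difference-operator bound to pass from order $\alpha+k+1$ down to $\alpha+k$, or equivalently applying that lemma with $k$ replaced by $k-1>0$) gives: it is bounded by $C\sigma^\theta (t-s-\sigma)^{-(\theta\frac{k}{2}+(1-\theta)\frac{k-2}{2})}$ for any $\theta\in(0,1)$. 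Feeding this into the double integral $\int_0^t\int_{|\sigma|<1}\sigma^{-1-r\bar\alpha}\|\cdots\|^r\,d\sigma\,ds$ with $r=1$, $\bar\alpha=1-\beta$, one needs $\theta>1-\beta$ to absorb the $\sigma^{-1-(1-\beta)}$ singularity at $\sigma=0$ against $\sigma^{\theta}$, which is possible since $1-\beta<1$; and one needs the resulting time-exponent $\theta\frac{k}{2}+(1-\theta)\frac{k-2}{2}$ to be $<1$ after integrating in $s$, which forces the constraint $k<3/2$ — exactly the hypothesis — upon optimizing $\theta$ near $1-\beta$ with $\beta>1/4$.

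\textbf{Main obstacle.} The routine part is the Fourier-multiplier bound for $\dpiu_h,\dmen_h$ on discrete Besov spaces; the delicate part is the bookkeeping of exponents in the interpolation, making sure that the two free parameters $\theta\in(0,1)$ and $k$ can be chosen simultaneously so that (i) $\theta>1-\beta$ kills the temporal singularity in $\sigma$, (ii) $\theta\frac{k}{2}+(1-\theta)\frac{k-2}{2}<1$ keeps the $s$-integral finite near $s=t$, and (iii) the resulting constant is uniform in $h$. With $\beta>1/4$ and $k<3/2$ there is a nonempty window for $\theta$ (roughly $\theta\in(1-\beta,1)$ with $k$ close to $3/2$ forcing $\theta$ close to $1$), so the claim goes through; writing this window explicitly and checking the endpoint behaviour is where the care is needed. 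Everything else — linearity of the semigroup, symmetry of $H_D$ making the $\Omega_h$ and $\Omega_h^+$ norms equivalent, and the reduction of the $\dmen_h$ case to the $\dpiu_h$ case by the same symbol estimate — is immediate from the results already established.
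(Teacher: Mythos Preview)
Your overall strategy is the same as the paper's: use the difference characterization (Proposition~\ref{difference_characterization}), control the $L^1_t$ part directly via the heat-kernel regularization, and control the increment part by the interpolated estimate of Lemma~\ref{lem:stime_delta_dirac_semigroup_regularization} after absorbing one spatial order for $\dpiu_h$. The paper, like you, only writes out the second line of \eqref{eq:erence_dirac_besov} for $\dpiu_h$; your argument for deducing the first line from the second by time-differentiation is reasonable.

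However, your exponent bookkeeping slips at exactly the point you flag as delicate. When you bound
\[
\Big\|\int_s^{s+\sigma}\dpiu_h e^{\Delta_h(t-\tau)}\delta_0\,d\tau\Big\|_{B^{\alpha+k}_{p,p}}
\]
by first using the derivative bound to pass to $B^{\alpha+k+1}_{p,p}$ and then invoking Lemma~\ref{lem:stime_delta_dirac_semigroup_regularization}, that lemma must be applied \emph{with the same $k$} (its statement already targets the norm $B^{\alpha+k+1}_{p,p}$), and it yields the time-base exponent $\theta\frac{k+1}{2}+(1-\theta)\frac{k-1}{2}$, not your $\theta\frac{k}{2}+(1-\theta)\frac{k-2}{2}$. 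Your ``equivalent'' alternative of replacing $k$ by $k-1$ in that lemma is not available: its hypothesis is $k>1$, and here $k-1\in(0,1/2)$. With the correct exponent the $s$-integrability condition becomes $\theta<(3-k)/2$; combined with $\theta>1-\beta$ this forces $\beta>(k-1)/2$, which is precisely what makes the hypotheses $\beta>1/4$ and $k<3/2$ bite. Your stated exponent would give the vacuous constraint $\beta>(k-2)/2<0$ and so does not account for the assumed range of $k$. Correct the exponent and the argument goes through as in the paper.
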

\begin{proof}
 Let us prove the second  in \eqref{eq:erence_dirac_besov}, only for $\dpiu_h,$.
Let us   fix $t>0$ and denote, for any $s\in(0,t)$ the function $$F(s,\cdot) = \int_0^s \dpiu_h e^{\Delta_h (t-\tau)}\delta_0 d\tau.$$
By the difference characterization of time-space Besov spaces \eqref{eq:difference_characterization_besov} we may estimate its Besov norm as 
\begin{align*}
    ||F||_{B_{1,1}^{1-\beta}([0,t], B_{p,p}^{\alpha+k}(\Omega_h^+))} 
    & = \int_0^t||F(s,\cdot)||_{B^{\alpha+k}_{p,p}}ds +    \int_0^t\int_{|\bar{s}|<1} \frac{||F(s+\bar{s})-F(s)||_{B^{\alpha+k}_{p,p}}}{\bar{s}^{2-\beta}}d\bar{s} ds.
\end{align*}
We start by giving an upper bound for $\|F\|_{L^1([0,t], B_{p,p}^{\alpha+k})}$. For \eqref{eq:semigroup_estimate_besov} and Proposition \eqref{prop:Delta_in_Besov}, because of the bound $\alpha<\frac{1}{p}-1$ we have
\begin{equation*}
\begin{aligned}
    \|F\|_{L^1([0,t], B_{p,p}^{\alpha+k} (\Omega_h^+))} & = \int_0^t\left\|\int_0^s \dpiu_h e^{\Delta_h (t-\tau)}\delta_0d\tau \right\|_{ B_{p,p}^{\alpha+k}}ds \\
& {\leq }C\int_0^t\int_0^s\|e^{\Delta_h (t-\tau)}\delta_0\|_{B^{\alpha+k+1}_{p,p}}d\tau ds
\leq C^\prime \int_0^t\int_0^s{\frac{1}{(t-\tau)^{\frac{k+1}{2}}}}d\tau ds.
\end{aligned}
\end{equation*}

The last integral is bounded, uniformly in $t\in[0,T]$, whenever  
$k<3$. Under the latter condition and hypothesis $\alpha<\frac{1}{p}-1$, we get 
\[
\|F\|_{L^1([0,t], B_{p,p}^{\alpha+k}(\Omega_h^+))} < C_1.
\]
for some $C_1>0$,  uniformly in $t\in[0,T]$.
Applying Lemma \ref{lem:stime_delta_dirac_semigroup_regularization} to the difference norm, we get
\begin{align*}
    ||F(s+\bar{s},\cdot)-F(s,\cdot)||_{B^{\alpha+k}_{p,p}(\Omega_h^+)} & \le {C}\int_s^{s+\bar{s}}\frac{1}{\tau^{\frac{1+k}{2}}}d\tau 
     \le \frac{1}{s^{\frac{k+1}{2}\theta+(1-\theta)\frac{k-1}{2}}} \bar{s}^\theta.
\end{align*}
Now, putting all the estimates together, we finally obtain:
\begin{equation}\label{eq:stima_F_besov}
    ||F||_{B_{1,1}^{1-\beta}([0,t], B_{p,p}^{\alpha+k}(\Omega_h^+))} \le C_1 + C_2 \int_0^t \int_{|\bar{s}|<1} \frac{1}{s^{\frac{k+1}{2}\theta+(1-\theta)\frac{k-1}{2}}} \sigma^{\theta+\beta-2} d\bar{s} ds
\end{equation}
For the integrability of the r.h.s.term we should ensure the following conditions:
\begin{eqnarray}
    \theta+\beta -2 > -1 & \rightarrow & \beta+\theta >1 
    \label{eq:summability_1}\\
    \frac{k+1}{2}\theta + (1-\theta)\frac{k-1}{2} < 1 & \rightarrow & \theta< \frac{3-k}{2} \label{eq:summability_3}
\end{eqnarray} 
From \eqref{eq:summability_1}
 the following condition on time regularity $\beta$ holds:
\begin{equation}\label{eq:cond_beta}
        \beta > 1-\theta
\end{equation}
Combining \eqref{eq:cond_beta} with \eqref{eq:summability_3}, we find the
 connection between the time regularity $\beta$ and spatial regularity $k$:
\begin{equation}\label{eq:legame_beta_kappa}
\beta > \frac{k-1}{2}
\end{equation}
From the lower bound in \eqref{eq:legame_beta_kappa}, we should impose that
\begin{equation*}
    \frac14 \ge \frac{k-1}{2} \,\rightsquigarrow \, k \le \frac32,
\end{equation*} 
that is the assumed bound for $k$. Hence, the integrability of the right hand side in \eqref{eq:stima_F_besov} is obtained and then we get  the thesis.
\end{proof}
\begin{remark}
For the bounds upon the parameters   in Lemma \eqref{lem:forward_difference_dirac_besov},   the maximum space regularity in   $B_{p,p}^{\alpha+k}$,  is obtained whenever
\begin{equation*}
  r<  \alpha+k \le \frac1p -1 +\frac32 = \frac1p + \frac12;
\end{equation*}
hence, by choosing $p=2$, we gain $\alpha+k=1$ regularity in space.
\end{remark}
We are ready to provide bounds   to the solution $u$  of the system \eqref{eq:discreteheatequation} in the Sobolev space, uniformly in time and for specific parameters. A continuity result with respect to the boundary condition is given.
 
\begin{proposition}[Continuity w.r.t. to the boundary]  \label{proposition:estimate_u}
    Let $u$ be the solution to the semi-discrete random heat problem \eqref{eq:discreteheatequation}.
    Let us suppose that $\psi$ satisfies \eqref{eq:regularity_boundary_process} with   $\psi(0)=0$; in particular $\beta\in (1/4,1/2)$. Then, for every $p\in[1,\infty) $ and $\alpha<1/p-1,$ there exist constants $C, C'$ not depending on $\psi$ such that, for any $r< 2\beta+\/p$
    \begin{equation*}
        ||u||_{L^\infty(\R_+,W^{p,r}(\Omega_h^+))} \le C' ||\psi||_{C^\beta([0,T])}.
    \end{equation*}
    In particular, we get
    \begin{equation}\label{eq:continuity_u_Psi_H1}
        ||u||_{L^\infty(\R_+, H^1(\Omega_h^+))} \le C ||\psi||_{C^\beta ([0,T])}.
    \end{equation}
\end{proposition}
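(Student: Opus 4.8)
The plan is to combine the explicit semigroup representation of the solution with the time--space Besov bounds already obtained for the difference derivatives of the discrete heat kernel, the bridge being the duality between a negative-order time Besov space and the H\"older class $C^\beta$. By Proposition \ref{prop:solution_random_system} (applicable since $\psi(0)=0$) and the definition \eqref{eq:heat_semigroup_exp} of the discrete heat semigroup, for $(t,x)\in(0,T]\times\Omega_h^+$ one has
\begin{equation*}
 u(t,x)=-\int_0^t (\dmen_h+\dpiu_h)\,e^{\Delta_h(t-s)}\delta_0(x)\,\psi(s)\,ds ,
\end{equation*}
the finite differences acting on the spatial argument of the kernel. I would read the right-hand side as a pairing, in the time variable, of the Besov-valued map $g:s\mapsto (\dmen_h+\dpiu_h)e^{\Delta_h(t-s)}\delta_0$ against the scalar H\"older path $\psi\in C^\beta([0,t])$, choosing as target space for $g$ a discrete Besov space of the highest spatial regularity that Lemma \ref{lem:forward_difference_dirac_besov} provides.

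The key analytic ingredient is the bilinear estimate: for any Banach space $X$, any $g\in B_{1,1}^{-\beta}([0,t],X)$ and any $\phi\in C^\beta([0,t])=B^\beta_{\infty,\infty}([0,t])$,
\begin{equation*}
 \Big\|\int_0^t g(s)\,\phi(s)\,ds\Big\|_X \le C\,\|g\|_{B_{1,1}^{-\beta}([0,t],X)}\,\|\phi\|_{C^\beta([0,t])},
\end{equation*}
with $C$ independent of $t\in[0,T]$. I would obtain this by a Bony-type decomposition in the time variable: the resonant part of the product is bounded by $\sum_{j\ge -1}\|\Delta^\tau_j g\|_{L^1([0,t],X)}\,\|\Delta^\tau_j \phi\|_{L^\infty}\le\|\phi\|_{C^\beta}\sum_{j\ge -1}2^{-j\beta}\|\Delta^\tau_j g\|_{L^1([0,t],X)}=\|\phi\|_{C^\beta}\|g\|_{B^{-\beta}_{1,1}([0,t],X)}$, while the two paraproduct parts have time-frequency supports not meeting the origin and so contribute an even more regular term on $[0,t]$, controlled by $\|g\|_{L^1([0,t],X)}\|\phi\|_{L^\infty}$. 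The resonant series converges precisely because the two smoothness indices sum to $0$ and the integrability indices are conjugate ($1$ and $\infty$); this is exactly the borderline duality underlying the discrete time--space Besov calculus of \cite{devecchi_nicolay_2021elliptic,2022_Turra} that is already used in Lemma \ref{lem:forward_difference_dirac_besov}.

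Granting this, the proof closes quickly. Fix $p\in[1,\infty)$ and $r<2\beta+1/p$, and choose $\alpha<1/p-1$ and $k\in(1,1+2\beta)$ with $\alpha+k>r$ --- admissible for any such $r$, since $\alpha$ may be taken arbitrarily close to $1/p-1$ and $1+2\beta>3/2$ by $\beta\in(1/4,1/2)$. (Although Lemma \ref{lem:forward_difference_dirac_besov} is stated with $k\in(1,3/2)$, its proof in fact only uses $k<1+2\beta$.) By Lemma \ref{lem:forward_difference_dirac_besov}, $g=(\dmen_h+\dpiu_h)e^{\Delta_h(t-\cdot)}\delta_0$ lies in $B_{1,1}^{-\beta}([0,t],B^{\alpha+k}_{p,p}(\Omega_h^+))$, with norm bounded by a constant uniform in $t\in[0,T]$ and in the mesh $h$ (the $h$-uniformity being already carried by Proposition \ref{prop:Delta_in_Besov} and Lemmas \ref{lem:heat_kernel_regu}--\ref{lem:stime_delta_dirac_semigroup_regularization}). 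Applying the bilinear estimate with $X=B^{\alpha+k}_{p,p}(\Omega_h^+)$ and $\phi=\psi$ gives $\|u(t,\cdot)\|_{B^{\alpha+k}_{p,p}(\Omega_h^+)}\le C'\|\psi\|_{C^\beta([0,T])}$ for every $t$; since $\alpha+k>r$, the ($h$-uniform) Besov embedding $B^{\alpha+k}_{p,p}(\Omega_h^+)\hookrightarrow W^{p,r}(\Omega_h^+)$ yields the first assertion. For \eqref{eq:continuity_u_Psi_H1} one takes $p=2$: then $1<2\beta+\tfrac12$ because $\beta>\tfrac14$, so $\alpha\in(-2\beta,-\tfrac12)$ and $k=1-\alpha$ with $\alpha+k=1$ are admissible, and $B^1_{2,2}=H^1$.

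The crux is the time regularity rather than the spatial lattice. The singular factor $(\dmen_h+\dpiu_h)e^{\Delta_h(t-s)}\delta_0$ fails to be integrable in $s$ in the $B^{\alpha+k}_{p,p}$ norm once $\alpha+k>1/p$, so one genuinely has to trade the H\"older regularity of $\psi$ against it --- precisely the regime in which the crude ``$L^1$ in time'' bound (which would only reach $r<1/p$) breaks down. Carrying out that trade in the critical case where the Besov smoothness indices sum to zero, and keeping every constant uniform both in $t\le T$ and in $h$ (the latter being what the convergence analysis of Section 7 will rely on), is the delicate point; once it is granted, the statement is a direct consequence of Lemma \ref{lem:forward_difference_dirac_besov}.
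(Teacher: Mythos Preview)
Your proof is correct and follows essentially the same route as the paper: the semigroup representation of $u$, the duality estimate $\|\int_0^t g(s)\psi(s)ds\|_X\le C\|g\|_{B^{-\beta}_{1,1}([0,t],X)}\|\psi\|_{C^\beta}$, Lemma \ref{lem:forward_difference_dirac_besov} for $g=(\dmen_h+\dpiu_h)e^{\Delta_h(t-\cdot)}\delta_0$, and the Besov embedding into $W^{p,r}$. You supply more detail than the paper does --- the Bony-decomposition justification of the bilinear estimate and the observation that the proof of Lemma \ref{lem:forward_difference_dirac_besov} actually yields the range $k<1+2\beta$ rather than merely $k<3/2$, which is what is needed to reach the full range $r<2\beta+1/p$ claimed in the statement.
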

 
\begin{proof} 
From the  representation of  $u$  given in \eqref{eq:solution_random_heat_equation_HD}, we move  some regularity from the boundary function $\psi$ to the spatial variable. Let us consider
\begin{align*}
    u(t,x) & = -\int_0^t \left(\dmen_h +\dpiu_h\right) H^D(t-s,x)\psi(s)ds     = - \int_0^t  \left(\dmen_h +\dpiu_h\right) e^{\Delta_h(t-s)} \delta_0(x)\psi(s)ds.
\end{align*}
Since the  boundary function $\psi(s)\in C^\beta ([0,t])= B_{\infty,\infty}^\beta ([0,t]) $, then the product $\dpiu_h e^{\Delta_h (t-s)}\delta_0 \psi(s)$, as a distribution depending on time $s$, is a well-defined distribution. This allows us to derive the following estimate for the Besov norm of the time integral as
\begin{equation*}\label{eq:stima_besov_heat_solution}
\left\|\int_0^t \dpiu_h e^{\Delta_h(t-s)} \delta_0 (x)\psi(s)ds \right\|_{B^{\alpha+k}_{p,p}} \leq \| \dpiu_h e^{\Delta_h (t-\cdot)}\delta_0\|_{ B_{1,1}^{-\beta}([0,t],B^{\alpha+k}_{p,p})} \|\psi(\cdot) \|_{C^\beta ([0,t])},
\end{equation*}
which is bounded for Lemma \ref{lem:forward_difference_dirac_besov} and hypotheses \eqref{eq:regularity_boundary_process}. This
implies that, whenever $r<\alpha+k$, the $L_t^\infty W_x^{p,r}$ norm of $u$ is bounded (see, e.g., \cite{devecchi_nicolay_2021elliptic}, Theorem 2.18) and the thesis is proven.
\end{proof}

\begin{proposition}
    Let $u$ be the solution of the random space-discrete heat equation \eqref{eq:discreteheatequation}. Then the following estimate holds:
    \begin{equation}\label{eq:continuity_D_hu_Psi_H1}
        \|\dpiu_h u \|_{L_t^\infty L_x^2} \le C_1' \|\psi\|_{C^\beta([0,T])}
    \end{equation}
\end{proposition}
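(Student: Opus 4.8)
The plan is to read this off as an immediate corollary of Proposition~\ref{proposition:estimate_u}, once one observes that the forward finite difference $\dpiu_h$ maps the discrete Sobolev space $H^1(\Omega_h^+)$ continuously into $L^2(\Omega_h^+)$, with operator norm bounded uniformly in $h$: morally, $\dpiu_h$ ``loses exactly one spatial derivative'', just like a genuine derivative, so the $H^1$-in-space control of $u$ translates into an $L^2$-in-space control of $\dpiu_h u$, with the same dependence on $\psi$.

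First I would invoke \eqref{eq:continuity_u_Psi_H1}, which gives, for every $t\in[0,T]$, the bound $\|u(t,\cdot)\|_{H^1(\Omega_h^+)}\le C\,\|\psi\|_{C^\beta([0,T])}$ with $C$ independent of $\psi$ and of $h$. Then I would use that $\dpiu_h\colon B^1_{2,2}(\Omega_h^+)\to B^0_{2,2}(\Omega_h^+)=L^2(\Omega_h^+)$ is bounded uniformly in $h$. On the full lattice $\Omega_h$ this is transparent from Plancherel and \eqref{eq:discreteFourier}: since $\mathcal{F}_h(\dpiu_h f)(\xi)=\tfrac{e^{ih\xi}-1}{h}\hat f(\xi)$ and $\bigl|\tfrac{e^{ih\xi}-1}{h}\bigr|^2=\tfrac{4}{h^2}\sin^2(h\xi/2)\le\xi^2$ for $\xi\in\mathbb{T}_h$, one gets $\|\dpiu_h f\|_{L^2(\Omega_h)}^2\le\|f\|_{H^1(\Omega_h)}^2$; equivalently, this is the difference‑norm characterization of the discrete spaces $H^1=B^1_{2,2}$, uniform in $h$, from \cite{devecchi_nicolay_2021elliptic}, together with its half‑line counterpart. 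Combining the two facts, $\|\dpiu_h u(t,\cdot)\|_{L^2(\Omega_h^+)}\le \|u(t,\cdot)\|_{H^1(\Omega_h^+)}\le C\,\|\psi\|_{C^\beta([0,T])}$ for every $t$, and taking the supremum over $t\in[0,T]$ yields the claim with $C_1'=C$.

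I do not expect a genuine obstacle here, the statement being essentially a corollary of Proposition~\ref{proposition:estimate_u}; the only delicate point worth flagging is that one must use the genuine, integer‑order $H^1$‑bound \eqref{eq:continuity_u_Psi_H1}, and \emph{not} attempt to estimate $\dpiu_h u$ directly by applying Lemma~\ref{lem:forward_difference_dirac_besov} to $\dpiu_h(\dmen_h+\dpiu_h)e^{\Delta_h(t-\cdot)}\delta_0$. That object carries one spatial derivative less than $\dpiu_h e^{\Delta_h(t-\cdot)}\delta_0$, so the latter route would only place $\dpiu_h u$ in $L^\infty_t B^{s}_{2,2}$ for some $s<0$, a space strictly larger than $L^\infty_t L^2_x$, and would therefore fall just short of the assertion.
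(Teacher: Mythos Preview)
Your proposal is correct and follows exactly the paper's approach: the paper's proof is the single line ``a direct application of Proposition~\ref{proposition:estimate_u} with $p=2$, $r=1$'', which is precisely your invocation of \eqref{eq:continuity_u_Psi_H1} followed by the observation that $\|\dpiu_h u\|_{L^2_x}\le\|u\|_{H^1_x}$. Your Fourier justification for this last inequality and your cautionary remark about the alternative route via Lemma~\ref{lem:forward_difference_dirac_besov} are additional (and correct) commentary, but the core argument is identical to the paper's.
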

\begin{proof}
    The estimation  derives from  a direct application of Proposition \ref{proposition:estimate_u} with $p=2, r=1.$
\end{proof}

\begin{remark}
An alternative way to prove that the maximum regularity that could be gained in space is 1 is to fix $\beta$ in \eqref{eq:legame_beta_kappa} and derive $k$ in terms of $\beta$, obtaining $ k < 2 (\beta +\frac12) = 2\beta+ 1 $.

The maximum regularity we can gain in space is given by $  \alpha + 2\beta+ 1 > 1$, implying $ \beta  > \frac{p-1}{2p}. $
Again, by choosing $p=2$ we get $\beta> \frac14$, which is the lowest regularity admissible for the boundary condition $\psi$.
\end{remark}

\section{A priori estimates for a quasi-linear equation}\label{section:apriori}

 Now we turn our attention to the nonlinear system \eqref{eq:nonlinear_discrete_system_v} for the function $v$.

 \smallskip
Following \cite{2025_SPA_MauMorUgo,2005_GN_NLA}, we perform a sort of linearization of the random semi-discrete system \eqref{eq:system_s_c_discrete}.   Let  $f:[0,T]\times \Omega_h^+ \to\mathbb{R}$ be a Borel function satisfying the following conditions for some $K>0$.
\begin{align}
\begin{aligned}\label{eq:hp_f}
&f\in C([0,T], L^2(\Omega_h^+)) \cap L^2([0,T],W^{1,2}(\Omega_h^+)); \\
    &\|f\|_{C([0,T],L^2(\Omega_h^+))}^2+\|D^+_x f\|^2_{L^2([0,T],L^2(\Omega_h^+))}\le K;\\
&f\ge 0;\\
&f(\cdot,0)=\psi; \\
&\|f\|_{L^\infty([0,T],L^\infty(\Omega_h^+))}\le \eta .
\end{aligned}
\end{align}
Given,  $f:[0,T]\times \Omega_h^+ \to\mathbb{R}$ as in \eqref{eq:hp_f}, let us  consider the following "linearized version" of the semi-discrete system \eqref{eq:system_s_c_discrete}
\begin{equation}\label{eq:lin_PDE_sg_s}
\begin{split}
\partial_t \tilde{s} & = \Delta_h \tilde{s} + b_g(t,x)[\dpiu_h g\dpiu_h \tilde{s} + \dmen_h g \dmen_h \tilde{s}]  + \gamma_g \tilde{s} (Bf-1); \\
\partial_t g & = -\lambda f\varphi(g) g,  
\end{split}
\end{equation}
 where $b_g, \gamma_g$ are the functions in \eqref{eq:functions_bc_gamma} defined on $g$, and where $g$ admits the following explicit expression 
\begin{equation}\label{eq:explicit_g}
    g(t,x) =  {Ac_0(x)}\left[{\varphi(c_0(x))e^{\lambda A\int_0^t f(\tau,x)d\tau}-B c_0(x)}\right]^{-1}.
\end{equation}
Equivalently, for the splitted variable $\tilde{v}$, such that   $\tilde{s}=u+\tilde{v}$, where  $u$ is the solution of the initial boundary value problem for the system \eqref{eq:discreteheatequation}, we have 
\begin{equation}\label{eq:linearized_v} 
\begin{split}
    \partial_t \tilde{v} & = \Delta_h \tilde{v} + b_g[\dpiu_h g \dpiu_h \tilde{v} + \dmen_h g \dmen_h \tilde{v}] -\gamma_g(1-Bf)\tilde{v} \\
    & \quad + b_g [\dpiu_h g \dpiu_h u+\dmen_h g \dmen_h u] - \gamma_g(1-Bf) u, \quad (t,x)\in (0,T]\times \Omega_h^+;    \\
    \tilde{v}(0,x) & = s_0(x), \hspace{6.2cm}   x\in  \Omega_h^+; \\
    \tilde{v}(t,0) &= 0, \hspace{6.8cm}  t\in [0,T]. 
\end{split}
\end{equation} 
The solution $\tilde{v}$ of \eqref{eq:linearized_v} is obtained, once $u$, its  difference derivatives $D^{\pm}_h u$ and the function  $f$ as in \eqref{eq:hp_f} are given.

\smallskip

In the following,  some more regularity hypotheses  are needed to achieve an a priori bounds for $v$ and then for $s$.

\begin{assumptions}\label{assumption:_bound_s0_c0_varphi}
Let us assume the following  hypotheses upon the initial and boundary conditions. The boundary condition $\psi$ satisfies \eqref{eq:regularity_boundary_process}  and  $\psi(0)=0$. Furthermore, 
\begin{eqnarray} 
    &    0\leq s_0(x) \leq \eta,          \,\, 
        s_0(0)=0,  & s_0 \in W^{1,p}(\Omega_h^+),  \,\, 2 \leq p \leq \infty;\label{eq:assumption_bound_s0_c0_varphi_1}\\
        &0<c_m\leq c_0(x) \leq C_0, \,\,\,&
     C_0- c_0 \in  W^{1,2}(\Omega_h^+). \label{eq:assumption_bound_s0_c0_varphi_2}
     \end{eqnarray}
     Furthermore, the bound \eqref{eq:limit_porosity}  for the porosity $\varphi$ holds and when $B>0$ then $B<1/\eta.$
\end{assumptions}

\begin{proposition}\label{proposition:estimate_g}
Let us suppose that assumptions \eqref{eq:hp_f}, \eqref{eq:assumption_bound_s0_c0_varphi_1} and \eqref{eq:assumption_bound_s0_c0_varphi_2} are satisfied.  
Then, the solution $g$ to \eqref{eq:explicit_g} and its derivative admit the following estimates
\begin{equation}\label{eq:dpiu_g_estimates}
\begin{split}
    \|C_0- g\|_{L^\infty_t L^2_x}^2 &\le k \left(\|C_0-c_0\|_{L^2(\Omega_h^+)}^2 + \kappa \| f\|^2_{C([0,T],L^2(\Omega_h^+))}\right);\\ \\
    \| g\|_{L^\infty([0,T]\times\Omega_h^+)} & \le \hat{C}_1(C_0,A,B)\\
        \left\|\dpiu_h g\right\|_{L^{\infty}_t L^2_x}^2 &\le\hat{C}_2(C_0,A,B,) T \left(\|\dpiu_h c_0\|_{L^2(\Omega_h^+)}^2 + \kappa \|\dpiu_h f\|^2_{L_t^2 L^2_x}\right).
    \end{split}
\end{equation}
where $\hat{C}_1,\hat{C}_2$ are positive measurable functions increasing with respect to all their variables. The last estimate is extended to $\|\dpiu_h\varphi(g)\|_{L^\infty_t L^2_x}$ up to a constant factor $|B|$.
\end{proposition}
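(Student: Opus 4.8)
\emph{Strategy.} The plan is to work directly from the explicit representation \eqref{eq:explicit_g}, exploiting the $L^\infty$ information provided by $0\le f\le\eta$ and the discrete product and quotient rules (Lemma~\ref{lem:Leibniz} and formula \eqref{eq:Leibniz_quotient}). Write $D(t,x):=\varphi(c_0(x))\,e^{\lambda A\int_0^t f(\tau,x)\,d\tau}-Bc_0(x)$, so that $g=Ac_0/D$, and set $I_t(x):=\int_0^t f(\tau,x)\,d\tau\ge0$. First I would prove the boundedness of $g$: since $e^{\lambda A I_t}\ge1$ and $\varphi(c_0)\ge\varphi_{min}>0$ one has $\varphi(c_0)e^{\lambda A I_t}\ge\varphi(c_0)$, hence, \emph{for every sign of $B$}, $D(t,x)\ge\varphi(c_0(x))-Bc_0(x)=A$; therefore $0<g(t,x)\le c_0(x)\le C_0$ for all $(t,x)$, which is the middle estimate with $\hat C_1=C_0$ and, via \eqref{eq:limit_porosity}, also controls $\varphi(g)$. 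Using in addition $I_t\le\eta T$ one gets $D\le\varphi_{max}e^{\lambda A\eta T}+|B|C_0=:\bar D$, so that $A\le D\le\bar D$; this two-sided bound is what makes the quotient rule usable in the last step.

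\emph{The estimate on $C_0-g$.} From \eqref{eq:explicit_g} a short computation gives $g-c_0=c_0\,\varphi(c_0)\,(1-e^{\lambda A I_t})/D$, and the elementary bound $0\le e^{x}-1\le xe^{x}$ together with $I_t\le\eta T$ and $D\ge A$ yields the pointwise estimate $|c_0(x)-g(t,x)|\le\varphi_{max}C_0\,\lambda\,e^{\lambda A\eta T}\,I_t(x)$. By Jensen's inequality $\|c_0-g(t)\|_{L^2(\Omega_h^+)}^2\le(\varphi_{max}C_0\lambda e^{\lambda A\eta T})^2\,t\int_0^t\|f(\tau)\|_{L^2}^2\,d\tau\le(\varphi_{max}C_0\lambda T e^{\lambda A\eta T})^2\|f\|_{C([0,T],L^2(\Omega_h^+))}^2$, and $\|C_0-g\|_{L^\infty_tL^2_x}^2\le2\|C_0-c_0\|_{L^2}^2+2\|c_0-g\|_{L^\infty_tL^2_x}^2$ gives the first estimate, with $k=2$ and $\kappa=(\varphi_{max}C_0\lambda T e^{\lambda A\eta T})^2$. (Alternatively one may differentiate $\|C_0-g(t)\|_{L^2}^2$ in time using the ODE $\partial_t g=-\lambda f\varphi(g)g$ and close by Gr\"onwall's lemma.)

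\emph{The estimate on $\dpiu_h g$, the core of the proof.} Applying the quotient rule \eqref{eq:Leibniz_quotient} to $g=Ac_0/D$,
\[
\dpiu_h g=\frac{D\,\dpiu_h(Ac_0)-(Ac_0)\,\dpiu_h D}{(\tau_h D)\,D},
\]
whose denominator is $\ge A^2$ by the first step, so it remains to bound $\dpiu_h D$ in $L^2_x$. By the discrete Leibniz rule and $\dpiu_h\varphi(c_0)=B\dpiu_h c_0$ one has $\dpiu_h D=\varphi(c_0)\,\dpiu_h(e^{\lambda A I_t})+B\,(\tau_h e^{\lambda A I_t})\,\dpiu_h c_0-B\,\dpiu_h c_0$; the elementary inequality $|e^a-e^b|\le e^{a\vee b}|a-b|$, applied with arguments in the fixed interval $[0,\lambda A\eta T]$, gives $|\dpiu_h(e^{\lambda A I_t})|\le\lambda A\,e^{\lambda A\eta T}\,|\dpiu_h I_t|$ with $\dpiu_h I_t(x)=\int_0^t\dpiu_h f(\tau,x)\,d\tau$, and Jensen's inequality gives $\|\dpiu_h I_t\|_{L^2_x}^2\le t\int_0^t\|\dpiu_h f(\tau)\|_{L^2_x}^2\,d\tau\le T\|\dpiu_h f\|_{L^2_tL^2_x}^2$. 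Combining these with $A\le D\le\bar D$, $0<c_0\le C_0$, and $\dpiu_h c_0\in L^2(\Omega_h^+)$ (which holds because $C_0-c_0\in W^{1,2}(\Omega_h^+)$ by \eqref{eq:assumption_bound_s0_c0_varphi_2}) produces $\|\dpiu_h g(t)\|_{L^2_x}^2\le\hat C_2\,T\bigl(\|\dpiu_h c_0\|_{L^2}^2+\kappa\|\dpiu_h f\|_{L^2_tL^2_x}^2\bigr)$ uniformly in $t\in[0,T]$, i.e. the third estimate. Finally $\varphi(g)=A+Bg$ gives $\dpiu_h\varphi(g)=B\,\dpiu_h g$, hence $\|\dpiu_h\varphi(g)\|_{L^\infty_tL^2_x}=|B|\,\|\dpiu_h g\|_{L^\infty_tL^2_x}$, which is the last assertion.

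\emph{Main obstacle.} The delicate point is the $L^2_x$-control of $\dpiu_h(e^{\lambda A I_t})$ in the last step: one must combine the discrete Lipschitz estimate for the exponential — legitimate only because $f$ is bounded, so that $\lambda A I_t$ ranges over the fixed compact $[0,\lambda A\eta T]$ — with a Jensen/Cauchy--Schwarz estimate in time to pass from $\dpiu_h f$ to $\int_0^t\dpiu_h f\,d\tau$, all while keeping the denominator $D$ bounded away from $0$. This is precisely where the hypotheses $f\ge0$ and $\|f\|_{L^\infty([0,T]\times\Omega_h^+)}\le\eta$ are used.
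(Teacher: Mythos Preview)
Your proof is correct and follows essentially the same route as the paper: work from the explicit formula \eqref{eq:explicit_g}, bound the denominator $D$ (the paper's $\zeta$) away from zero, then apply the discrete quotient and product rules and a Lipschitz/mean-value estimate on the exponential to control $\dpiu_h D$. Your observation that $D\ge\varphi(c_0)-Bc_0=A$ holds \emph{uniformly in the sign of $B$} is in fact a bit cleaner than the paper's argument, which splits into the two cases $B=\pm1$ and, for $B=-1$, only obtains the weaker lower bound $\zeta\ge A+BC_0$; likewise your elementary bound $|e^a-e^b|\le e^{a\vee b}|a-b|$ replaces the paper's integral representation of $\dpiu_h\tilde h$ via $\int_0^1(\cdots)d\theta$, but the content is the same.
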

    \begin{proof} The estimate for $g$ can be simply obtained with the same approach as the continuous counterpart in \cite[ Proposition 32]{2025_SPA_MauMorUgo}.\\
     For the second bound in  \eqref{eq:dpiu_g_estimates}, we introduce the following notations, for any $t\in(0,T]$ and $x\in \Omega_h^+.$
    \begin{equation*}
        g(t,x) = \frac{Ac_0(x)}{\zeta(t,x)},\quad\zeta(t,x) := \varphi(c_0(x))\tilde{h}(t,x)-Bc_0(x), \quad \tilde{h}(t,x):=e^{\lambda A \int_0^t f(\tau,x)d\tau} .
    \end{equation*}
    We note that, when $A=1$ and $B=1$, we have
    \[\zeta(t,x) = A \tilde{h}(t,x) + B c_0(x)(\tilde{h}(t,x)-1) \geq  A, \]
    where we used that $\tilde{h} \geq 1$ and $c_0 \geq c_m$. Furthermore for $A=1$ and $B=-1$ we have
    \[\zeta(t,x) \geq \tilde{h}(A+ B c_0(x)) \geq (A+B C_0) >0  \]
    by the requirement on $C_0$. Since $A c_0(x) \leq A C_0$, the second inequality of \eqref{eq:dpiu_g_estimates} follows. \\
    
    For the third bound in \eqref{eq:dpiu_g_estimates}, by Leibniz rule (Lemma \eqref{lem:Leibniz}) we get
    \begin{align}
        \dpiu_h g(t,x) &  =\frac{\zeta(t,x) \dpiu_h (A c_0(x)) - A c_0(x)\dpiu_h\zeta(t,x) }{(\tau_h\zeta(t,x))\cdot\zeta(t,x)}, \label{eq:dpiu_g}
    \end{align}
    where
    \begin{align*}
       \dpiu_h\zeta(t,x)&= \varphi(c_0(x)\dpiu_h \tilde{h}(t,x)+\tilde{h}(x+h,t)\dpiu_h \varphi(c_0(x))-B\dpiu_h c_0(x),\\
      \dpiu_h \tilde{h}(t,x) & =\frac1h \left(e^{\lambda A \int_0^t f(x+h,\tau)d\tau} -e^{\lambda A \int_0^t f(\tau,x)d\tau}\right)\\
       &= \int_0^1 \left(e^{\lambda A \int_0^t   f(x,\tau) +\theta h\dpiu_h f(x,\tau)  d\tau}\right) \left(\lambda A \int_0^t  \dpiu_h f(x,\tau)  d\tau\right)d\theta.
    \end{align*}
The bound for    $\dpiu_h \tilde h$ is trivially  given by
\begin{equation*}
\begin{aligned}
         \|\dpiu_h \tilde h \|_{L^{\infty}_t L_x^2}^2 &
        & \le \exp{\left(C_1 T\left\|f\right\|_{L^{\infty}_t L^\infty_x}\right)}
        \left(\lambda A T^2\left\| \dpiu_hf\right\|^2_{L^2_t L^2_x}\right),\\
\end{aligned}
\end{equation*}
where we used that $\|h D_{h}^{+}f \|_{L^{\infty}_tL^{\infty}_x}\leq 2 \| f\|_{L^{\infty}_tL^{\infty}_x}$, uniformly in $h>0$, and that
\[\int_0^T\left(\int_0^t D^+_hf(x,\tau)d\tau\right)^2 d t \leq \frac{T^2}{2} \int_0^T|D^+_hf(x,\tau)|^2 d \tau. \]
Furthermore,
\begin{align*}
    \|\zeta\|_{L^\infty_t L^2_x}^2 &=\sup_{t\in [0,T]} \| \varphi(c_0) e^{\lambda A \int_0^t f(\tau, x)d\tau} - Bc_0\|_{L^2(\Omega_h^+)}^2\\
    & \leq \varphi_{max} \exp{\left(\lambda A T \|f\|^2_{L^\infty_{t}L^\infty_x}\right)}+ |B|C_0 \leq C
\end{align*}
   and
    \begin{align*}
        & \left\|\dpiu_h \zeta\right\|^2_{L^\infty_t L^2_x} = \sup_{t\in [0,T]}\left\|\dpiu_h \varphi(c_0) \tilde{h} + \varphi(c_0) \dpiu_h \tilde h - B\dpiu_h c_0\right\|^2_{L^2(\Omega_h^+)} \\
         & \le C_1\|\dpiu_h c_0\|_{L^2(\Omega_h^+)}^2 + C_2\|\dpiu_h f\|^2_{L^2_{t}L^2_x}
    \end{align*}
   where we have exploited the hypothesis on the data $f$.

Since we have proved that $\zeta(t,x)>0$, going back to the definition of $\dpiu_h g$ in \eqref{eq:dpiu_g} we get
\begin{align*}
    \|\dpiu_h g\|_{L^\infty_t L^2_x}
    \le C\|\zeta(t,x) \dpiu_h (A c_0(x)) - A c_0(x)\dpiu_h\zeta(t,x)\|_{L^\infty_t L^2_x}
\end{align*}
By rearranging all the estimates combined with the hypothesis for f in \eqref{eq:hp_f} we obtain the following estimation on $\dpiu_h g$
\begin{equation*}
    \|\dpiu_h g\|_{L^\infty_t L^2_x}^2 \le c T\left( \|\dpiu_h c_0\|_{L^2(\Omega_h^+)}^2 + \kappa \|\dpiu_h f\|^2_{L^2_{t}L^2_x}\right).
\end{equation*}
The estimate for the  term $\|\dpiu_h\varphi(g)\|_{L^\infty_t L^2_x}$ are related to the previous estimates for $g$ since
\begin{align*}
    \|\dpiu_h\varphi(g)\|_{L_t^\infty L_x^2}^2 & = \sup_{t\in [0,T]} \|\dpiu_h\varphi(g)\|^2_{L^2(\Omega_h^+)}
     \le \sup_{t\in [0,T]} |B| \|\dpiu_h g\|_{L^2(\Omega_h^+)}^2
\end{align*}
\end{proof}

\begin{proposition}\label{proposition:sbounds}
Let us suppose that Assumptions \ref{assumption:_bound_s0_c0_varphi} hold and that $f$ satisfy the conditions \eqref{eq:hp_f}. Then there is a unique solution $\tilde{s}$ to the equation \eqref{eq:lin_PDE_sg_s} and we have
\[0 \leq \tilde{s}(t,x) \leq \eta \]
for any $(t,x) \in [0,T] \times \Omega_h^+$
\end{proposition}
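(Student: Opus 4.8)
The plan is to proceed in two stages: first establish existence and uniqueness of a solution $\tilde s$ to the linear system \eqref{eq:lin_PDE_sg_s} (equivalently, of $\tilde v$ to \eqref{eq:linearized_v} via $\tilde s = u+\tilde v$), and then prove the two-sided bound $0\le \tilde s\le \eta$ by a maximum-principle / comparison argument for the space-discrete equation. For existence and uniqueness, I would exploit that, once $f$ (hence $g$ and $b_g,\gamma_g$, all with the regularity guaranteed by \eqref{eq:hp_f} and Proposition \ref{proposition:estimate_g}) is fixed, equation \eqref{eq:linearized_v} is a \emph{linear} semi-discrete parabolic system for $\tilde v$: on each fixed lattice $\Omega_h^+$ with zero boundary datum it is essentially a linear ODE system (infinite-dimensional, but with the dissipativity coming from $\Delta_h$) with time-dependent bounded coefficients and a forcing term built from $u,\dpiu_h u,\dmen_h u$, which are controlled in $L^\infty_t L^2_x$ by Proposition \ref{proposition:estimate_u} and \eqref{eq:continuity_D_hu_Psi_H1}. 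Standard $L^2$ energy estimates (multiply by $\tilde v$, integrate over $\Omega_h^+$, use discrete integration by parts Lemma \ref{lem:discrete_int_parts}, Young's inequality, Gronwall) give a unique solution in $C([0,T],L^2_x)\cap L^2([0,T],W^{1,2}_x)$; uniqueness follows by linearity applied to the difference of two solutions with zero data.

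For the lower bound $\tilde s\ge 0$: I would rewrite \eqref{eq:lin_PDE_sg_s} in nondivergence form, $\partial_t\tilde s = \Delta_h\tilde s + b_g[\dpiu_h g\,\dpiu_h\tilde s + \dmen_h g\,\dmen_h\tilde s] + \gamma_g(Bf-1)\tilde s$, with zeroth-order coefficient $\gamma_g(Bf-1)$ which is bounded (using $0\le f\le\eta$, $0\le\gamma_g=\lambda g\le \lambda\hat C_1$, and $Bf-1\le B\eta-1<0$ when $B>0$ by Assumptions \ref{assumption:_bound_s0_c0_varphi}, or $Bf-1<0$ trivially when $B<0$). Set $\tilde s_- = \max(-\tilde s,0)$; multiplying the equation by $-\tilde s_-$ and integrating over $\Omega_h^+$, the diffusion term contributes a nonpositive quantity (the discrete analogue of $-\int|\dpiu_h\tilde s_-|^2$), the first-order term is absorbed via Young's inequality into the diffusion term and an $L^2$ term using the $L^\infty_tL^2_x$ bound on $\dpiu_h g,\dmen_h g$ from Proposition \ref{proposition:estimate_g} together with the uniform $L^\infty$ control $\|\dpiu_h g\|_{L^\infty_x}\le 2\|g\|_{L^\infty_x}/h$... — actually cleaner: since on each fixed lattice the coefficients $\dpiu_h g,\dmen_h g$ are bounded in $L^\infty$, the argument is a routine discrete parabolic maximum principle. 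Since $\tilde s(0,\cdot)=s_0\ge 0$ and $\tilde s(t,0)=\psi_t\ge 0$, Gronwall gives $\|\tilde s_-(t)\|_{L^2_x}^2\le 0$, hence $\tilde s\ge 0$.

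For the upper bound $\tilde s\le\eta$: set $w=\tilde s-\eta$ and observe $\partial_t w = \Delta_h w + b_g[\dpiu_h g\,\dpiu_h w+\dmen_h g\,\dmen_h w] + \gamma_g(Bf-1)\tilde s$. Since $\tilde s\ge 0$ (just proved), $\gamma_g\ge 0$, and $Bf-1\le B\eta - 1\le 0$ (by the hypothesis $B<1/\eta$ when $B>0$, and automatically when $B<0$), the zeroth-order source $\gamma_g(Bf-1)\tilde s\le 0$, so $w$ is a subsolution of a linear equation with $w(0,\cdot)=s_0-\eta\le 0$ and $w(t,0)=\psi_t-\eta\le 0$. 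Testing against $w_+=\max(w,0)$ and running the same energy estimate as above yields $\|w_+(t)\|_{L^2_x}=0$, i.e. $\tilde s\le\eta$. The main obstacle I anticipate is handling the first-order terms $b_g[\dpiu_h g\,\dpiu_h\tilde s+\dmen_h g\,\dmen_h\tilde s]$ in the energy estimates: one must be careful that the discrete product rule \eqref{eq:chain_rule_D} and integration-by-parts \eqref{lem:discrete_int_parts} produce boundary contributions at $x=0$ that vanish (because $w_+$ or $\tilde s_-$ vanishes there, as $\psi_t\ge 0$ and $\psi_t\le\eta$), and that the gradient terms are genuinely absorbed by the good term $-\|\dpiu_h(\cdot)_\pm\|_{L^2_x}^2$ coming from $\Delta_h$ rather than merely by the (weaker, $h$-dependent) $L^\infty$ bounds on $\dpiu_h g$; using instead the $L^\infty_tL^2_x$ bounds on $\dpiu_h g$ via a discrete Gagliardo–Nirenberg/Sobolev interpolation in one space dimension keeps the constants uniform in $h$, which is essential for the later convergence analysis.
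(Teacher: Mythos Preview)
Your approach is viable but takes a genuinely different route from the paper. The paper proves Proposition~\ref{proposition:sbounds} via a probabilistic Feynman--Kac representation (developed in Appendix~\ref{appendix:proof}): after time-reversal, the linear equation \eqref{eq:lin_PDE_sg_s} is cast in the form \eqref{eq:discretegeneric1} with nonnegative jump rates $C^\pm$ and nonnegative potential $V=-\Gamma_{g,f}=-\lambda g(Bf-1)\ge 0$ (using $B<1/\eta$). Lemma~\ref{FKformula} then gives
\[
\tilde s(t,x)=\E\!\left[e^{\int_{T-t}^{\tau}\Gamma_{g,f}\,d\sigma}\Big(s_0(X_T)\,\mathbb I_{\tau>T}+\psi(T-\tau)\,\mathbb I_{\tau\le T}\Big)\right],
\]
and since $0\le s_0,\psi\le\eta$ and the exponential factor lies in $[0,1]$, the two-sided bound follows in one line from monotonicity of expectation.

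Your Stampacchia truncation argument (testing against $\tilde s_-$ and $(\tilde s-\eta)_+$) is the classical PDE alternative and is in principle correct for fixed $h$. The comparison is this: the Feynman--Kac route completely bypasses the discrete-calculus issues you flag at the end---no need to worry about how the discrete product rule interacts with $(\cdot)_\pm$, no need to absorb the drift terms $b_g\,D^\pm_h g\,D^\pm_h\tilde s$ into the dissipation, and the resulting bound is manifestly $h$-uniform with no appeal to discrete Sobolev/Gagliardo--Nirenberg. Your route is more self-contained (no appendix needed) but, as you correctly anticipate, the cross-terms $\int \tilde s_-\,b_g\,D^\pm_h g\,D^\pm_h\tilde s$ are awkward because in the discrete setting $\tilde s_-\,D^\pm_h\tilde s$ is not pointwise equal to $-\tilde s_-\,D^\pm_h\tilde s_-$ at sign-change nodes; one has to check an inequality rather than an identity there. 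For the bare statement $0\le\tilde s\le\eta$ this is harmless (the Gronwall constant multiplies zero), so your worry about $h$-uniformity is not needed for \emph{this} proposition---but the paper's probabilistic argument is shorter and avoids the case analysis altogether.
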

\begin{proof}
 In Appendix \ref{appendix:proof}, for the convenience of the reader, the Feynman-Kac representation formula and other related results for a general continuous time process taking values on a space lattice are derived. By using those results (see  Appendix \ref{appendix:proof}),
we are going to prove that the solution $\tilde{s}$ to equation \eqref{eq:lin_PDE_sg_s} has the following stochastic representation:
\begin{equation}\label{eq:tildesrepresentation1}
\tilde{s}(t,x)=\mathbb{E}\left[e^{\int_{T-t}^{\tau^{t,x}}\Gamma_{g,f}(T-\sigma,X^{T-t,x}_{\sigma}) d \sigma}\left(s_0(X_{T}^{T-t,x})\mathbb{I}_{\tau^{t,x} > T} +\psi(T- \tau^{t,x})\mathbb{I}_{\tau^{t,x} \leq T}\right) \right]
\end{equation}
where $X^{\tau,x}_{\cdot}$ is the solution to the martingale problem associated with the operator 
\[\bar{L}=\partial_t + \Delta_h + \beta_g^{+}(T-t,x) D_h^+ -\beta_g^{-}(T-t,x) D_h^-,\]
where the function $\Gamma_{f,g}$ and $\beta^{\pm}_{g}$ are given by the expressions
\begin{align*}
    \beta_{g}^{\pm}(t,x)=&\pm\frac{B D^{\pm}_hg(t,x)}{2(A+B g(t,x))} ,\\
    \Gamma_{g,f}(t,x)=&\lambda g(t,x) (B f(t,x)-1).
\end{align*}
Since $s_0,\psi$ satisfy $0 \leq s_0, \psi \leq \eta$, and $\Gamma_{g,f} \leq 0$ (by the hypothesis $B<\frac{1}{\eta}$) and thus $ e^{\int_0^{\tau^{t,x}}\Gamma_{g,f}(T-\sigma,X^{T-t,x}_{\sigma}) d \sigma} \leq 1$, the thesis follows from  representation \eqref{eq:tildesrepresentation1}. Formula \eqref{eq:tildesrepresentation1} is a particular case of the general formula (Lemma \ref{FKformula}) in Appendix \ref{appendix:proof}.  \\
In order to prove equation \eqref{eq:tildesrepresentation1}, it is convenient to consider the function $\hat{s}(t,x)=\tilde{s}(T-t,x)$ and prove an analogous representation for the function $\hat{s}$. If the function $\tilde{s}$ satisfies equation \eqref{eq:lin_PDE_sg_s}, then the function $\hat{s}$ satisfies the following equation 
\begin{equation}\label{eq:hats1}
\begin{split}
&\bar{L}(\hat{s})(t,x)+\Gamma_{g,f}(t,x)\hat{s}(t,x)= 0\\
&\hat{s}(T,x)=s_0(x)\\
&\hat{s}(t,0)=\psi(T-t).
\end{split}
\end{equation}
Equation \eqref{eq:hats1} is equivalent to equation \eqref{eq:discretegeneric1} in Appendix \ref{appendix:proof} in the case where 
\[C^{\pm}(t,x)=\frac{1}{2h} \left( 2- \beta_g^{\pm}(T-t,x) \right)= \frac{1}{2h} \left(2-\frac{B(g(T-t,x)-g(T-t,x\pm h))}{A+B g(T-t,x)} \right)  \]
and $V(t,x)=- \Gamma_{g,f}(T-t,x)$. Both the functions are positive and bounded thanks to the conditions we have imposed on the functions $g$, $f$ and $B$.
\end{proof}
\begin{corollary}\label{corollary:vbounds}
 Suppose that Assumptions \ref{assumption:_bound_s0_c0_varphi} hold and that $f$ satisfies the conditions \eqref{eq:hp_f}. Then
 \[\|\tilde{v} \|_{L^{\infty}([0,T]\times \Omega_h^+)} \leq 2 \eta. \]
\end{corollary}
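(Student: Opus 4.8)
The plan is to exploit the splitting $\tilde s = u + \tilde v$, equivalently $\tilde v = \tilde s - u$, and to bound the two summands separately using results already in hand. First I would invoke Proposition \ref{proposition:sbounds}: under Assumptions \ref{assumption:_bound_s0_c0_varphi} together with the conditions \eqref{eq:hp_f} on $f$, the linearized equation \eqref{eq:lin_PDE_sg_s} has a unique solution $\tilde s$ satisfying $0 \le \tilde s(t,x) \le \eta$ for every $(t,x)\in [0,T]\times\Omega_h^+$; in particular $\|\tilde s\|_{L^\infty([0,T]\times\Omega_h^+)}\le \eta$. Note that this step is precisely where the real work lies — via the Feynman--Kac representation \eqref{eq:tildesrepresentation1} and the sign of $\Gamma_{g,f}$ — and the hypotheses of the corollary are exactly those required there.

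Next I would control $u$. Since $u$ solves the space-discrete random heat equation \eqref{eq:discreteheatequation} with boundary datum $\psi$ satisfying \eqref{eq:regularity_boundary_process}, Proposition \ref{proposition:heatbounds} gives $\|u\|_{L^\infty([0,T]\times\Omega_h^+)}\le \|\psi\|_{L^\infty([0,T])}$; combining this with the pointwise bound $\psi_t\in[0,\eta]$ from \eqref{eq:regularity_boundary_process} yields $\|u\|_{L^\infty([0,T]\times\Omega_h^+)}\le \eta$. Note that the hypothesis $\psi(0)=0$ appearing in Assumptions \ref{assumption:_bound_s0_c0_varphi} ensures the representation of $u$ used earlier is consistent, but is not otherwise needed for this bound.

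Finally, from $\tilde v = \tilde s - u$ and the triangle inequality in $L^\infty([0,T]\times\Omega_h^+)$,
\[
  \|\tilde v\|_{L^\infty([0,T]\times\Omega_h^+)} \le \|\tilde s\|_{L^\infty([0,T]\times\Omega_h^+)} + \|u\|_{L^\infty([0,T]\times\Omega_h^+)} \le \eta + \eta = 2\eta,
\]
which is the assertion. There is no genuine obstacle in the corollary itself: it is an immediate consequence of the two pointwise bounds, and the only point requiring attention is to check that the hypotheses invoked for Propositions \ref{proposition:sbounds} and \ref{proposition:heatbounds} coincide with those assumed in the statement, which they do.
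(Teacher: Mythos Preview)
Your proof is correct and follows exactly the same approach as the paper: write $\tilde v = \tilde s - u$, apply Proposition~\ref{proposition:sbounds} to get $\|\tilde s\|_{L^\infty}\le \eta$ and Proposition~\ref{proposition:heatbounds} together with $0\le\psi\le\eta$ to get $\|u\|_{L^\infty}\le\eta$, then conclude by the triangle inequality. The paper's proof is a one-line version of precisely this argument.
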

\begin{proof}
Since $\tilde{v}=\tilde{s}-u$ the result follows from Proposition \ref{proposition:heatbounds} and Proposition \ref{proposition:sbounds}.
\end{proof}

\begin{proposition}[Uniform $L^2$ estimate for $\tilde{v}$]\label{proposition:estimates_v}
  Suppose the  hypotheses in Proposition \ref{proposition:estimate_g} hold. Then,  the solution $\tilde{v}$ to the linearized system \eqref{eq:linearized_v} satisfies the following bound 
  \begin{equation}\label{eq:stima_linearized_v}
      \sup_{t\in [0,T]}\int_{\Omega_h^+}  \tilde{v}^2(t,x) dx + \int_0^T\int_{\Omega_h^+}[(\dmen_h \tilde{v})^2 + (\dpiu_h \tilde{v})^2)] dx  ds  \leq \frac{1}{\varepsilon}  \bar{C} +     \varepsilon\|\dpiu_h f\|^2_{L_t^2 L_x^2},
  \end{equation}
 where $\bar{C}$ is an (increasing and measurable) function depending upon $\eta, \lambda, C_0,\varphi_{min}, B,T,  \|\psi\|_{C^\beta_t}  $ in Assumptions \eqref{assumption:_bound_s0_c0_varphi} and in conditions \eqref{eq:hp_f}  but not on $K$ in \eqref{eq:hp_f}, and  $ \varepsilon \leq 2\varphi_{min}$.
\end{proposition}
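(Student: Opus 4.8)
The plan is to run a discrete energy estimate for the linearized equation \eqref{eq:linearized_v}. First I would multiply the $\tilde v$-equation by $\tilde v$ and sum over $x\in\Omega_h^+$, which gives
\begin{align*}
\frac12\frac{d}{dt}\int_{\Omega_h^+}\tilde v^2\,dx
&=\int_{\Omega_h^+}\tilde v\,\Delta_h\tilde v\,dx
+\int_{\Omega_h^+}\tilde v\,b_g\big[\dpiu_h g\,\dpiu_h\tilde v+\dmen_h g\,\dmen_h\tilde v\big]\,dx\\
&\quad -\int_{\Omega_h^+}\gamma_g(1-Bf)\,\tilde v^2\,dx+R_u(t),
\end{align*}
where $R_u(t)$ denotes the two $u$-dependent source terms of \eqref{eq:linearized_v}. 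The diffusive term is handled by the discrete integration-by-parts rules of Lemma \ref{lem:discrete_int_parts}: since $\tilde v(t,0)=0$ the boundary contribution drops and $\int_{\Omega_h^+}\tilde v\,\Delta_h\tilde v\,dx=-\|\dpiu_h\tilde v\|_{L^2(\Omega_{h,0}^+)}^2$; combined with the elementary identity $\|\dmen_h\tilde v\|_{L^2(\Omega_h^+)}=\|\dpiu_h\tilde v\|_{L^2(\Omega_{h,0}^+)}$, this dissipation controls, up to a harmless constant, the full gradient term $\int_{\Omega_h^+}[(\dmen_h\tilde v)^2+(\dpiu_h\tilde v)^2]$ on the left of \eqref{eq:stima_linearized_v}. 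The reaction term has the favourable sign: $\gamma_g=\lambda g\ge 0$ by Proposition \ref{proposition:estimate_g}, and $1-Bf\ge 0$ by the sign conditions on $B$ and $f$ (when $B<0$ it is $\ge 1$; when $B>0$ one has $B<1/\eta$ by Assumptions \ref{assumption:_bound_s0_c0_varphi} and $f\le\eta$ by \eqref{eq:hp_f}), so it is simply discarded.

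It then remains to control the transport terms and $R_u$. Here $b_g=\tfrac12 B/(A+Bg)$ is bounded by $|B|/(2\varphi_{min})$ thanks to \eqref{eq:limit_porosity}, and $\|\tilde v\|_{L^\infty([0,T]\times\Omega_h^+)}\le 2\eta$ by Corollary \ref{corollary:vbounds}; hence, by Cauchy--Schwarz in $x$, $|\int_{\Omega_h^+}\tilde v\,b_g\,\dpiu_h g\,\dpiu_h\tilde v\,dx|\le \frac{|B|\eta}{\varphi_{min}}\|\dpiu_h g(t)\|_{L^2_x}\|\dpiu_h\tilde v(t)\|_{L^2_x}$, and likewise for the backward difference. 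A Young inequality splits this into a small multiple of the dissipation $\|\dpiu_h\tilde v\|_{L^2_x}^2$, which is absorbed — the admissible size of the splitting parameter being exactly what forces $\varepsilon\le 2\varphi_{min}$ — plus a multiple of $\|\dpiu_h g(t)\|_{L^2_x}^2$. After integration in time the latter is estimated through $\|\dpiu_h g\|_{L^\infty_t L^2_x}^2\le \hat{C}_2 T(\|\dpiu_h c_0\|_{L^2_x}^2+\kappa\|\dpiu_h f\|_{L^2_t L^2_x}^2)$ from Proposition \ref{proposition:estimate_g}: the $\|\dpiu_h c_0\|^2$ part is harmless and enters $\bar C$, while the $\|\dpiu_h f\|_{L^2_t L^2_x}^2$ part is retained on the right with a coefficient the Young parameter turns into $\varepsilon$. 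The two source terms in $R_u$ are treated in the same spirit: $|\int_{\Omega_h^+}\tilde v\,b_g[\dpiu_h g\,\dpiu_h u+\dmen_h g\,\dmen_h u]\,dx|$ is bounded with $\|\tilde v\|_\infty\le 2\eta$, $\|\dpiu_h g\|_{L^\infty_t L^2_x}$ (Proposition \ref{proposition:estimate_g}) and $\|\dpiu_h u\|_{L^\infty_t L^2_x}\le C\|\psi\|_{C^\beta([0,T])}$ (estimate \eqref{eq:continuity_D_hu_Psi_H1}), and a Young inequality again produces a term $\le \tfrac{\varepsilon}{2}\|\dpiu_h f\|_{L^2_t L^2_x}^2$ plus a contribution to $\tfrac1\varepsilon\bar C$; the term $-\int_{\Omega_h^+}\gamma_g(1-Bf)\,u\,\tilde v\,dx$ is bounded by $C(\|u(t)\|_{L^2_x}^2+\|\tilde v(t)\|_{L^2_x}^2)$ using $\|u\|_{L^\infty_t H^1_x}\le C\|\psi\|_{C^\beta([0,T])}$ from \eqref{eq:continuity_u_Psi_H1}, the $\|u\|^2$ part entering $\bar C$.

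Putting the pieces together, integrating in $t$, taking the supremum over $t\in[0,T]$, and using $\|s_0\|_{L^2_x}\le\|s_0\|_{W^{1,2}(\Omega_h^+)}<\infty$, one is left with a $\int_0^t\|\tilde v\|_{L^2_x}^2$ term on the right, which Grönwall's lemma removes; its constant depends only on $\lambda$, $\|g\|_{L^\infty}\le \hat{C}_1(C_0,A,B)$, $\eta$, $B$ and $T$, and — crucially for the statement — \emph{not} on $K$, since by Proposition \ref{proposition:estimate_g} neither $\|g\|_\infty$ nor the parts of $\|\dpiu_h g\|$ retained in $\bar C$ depend on the bound $K$ in \eqref{eq:hp_f}. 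This yields \eqref{eq:stima_linearized_v}. I expect the main obstacle to be precisely the terms quadratic in first-order difference quotients — $b_g\,\dpiu_h g\,\dpiu_h\tilde v$ and $b_g\,\dpiu_h g\,\dpiu_h u$ together with their backward analogues — because $\dpiu_h g$ lives only in $L^2_x$ and cannot be bounded in $L^\infty_x$: one must spend part of the diffusive dissipation on the first and pay with $\|\dpiu_h f\|_{L^2_t L^2_x}^2$ on the second, which is exactly why the estimate cannot be closed and carries the remainder $\varepsilon\|\dpiu_h f\|_{L^2_t L^2_x}^2$, with the balancing of the Young parameters against the available dissipation pinning down the admissible range $\varepsilon\le 2\varphi_{min}$. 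This is also the point at which the argument departs from its continuous counterpart in \cite{2025_SPA_MauMorUgo}, and where a discrete Gagliardo--Nirenberg interpolation between $L^2_t H^1_x$ and $L^\infty_t L^2_x$ for $\tilde v$ can be invoked to organise the bookkeeping while keeping all constants independent of $K$.
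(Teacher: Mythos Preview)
Your energy estimate has a genuine gap at exactly the point you flag as ``the main obstacle'': the transport term $\int_{\Omega_h^+}\tilde v\,b_g\,\dpiu_h g\,\dpiu_h\tilde v\,dx$. You propose to bound it by $\tfrac{|B|\eta}{\varphi_{min}}\|\dpiu_h g\|_{L^2_x}\|\dpiu_h\tilde v\|_{L^2_x}$ and then Young-split as $\delta\|\dpiu_h\tilde v\|_{L^2_x}^2+\tfrac{C}{\delta}\|\dpiu_h g\|_{L^2_x}^2$, absorbing the first part into the dissipation. But absorption forces $\delta<1$, hence the coefficient in front of $\|\dpiu_h g\|_{L^2_x}^2$ is bounded \emph{below} by a fixed data-dependent constant. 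After you invoke $\|\dpiu_h g\|_{L^\infty_tL^2_x}^2\le \hat C_2 T(\|\dpiu_h c_0\|^2+\kappa\|\dpiu_h f\|_{L^2_tL^2_x}^2)$, the factor multiplying $\|\dpiu_h f\|_{L^2_tL^2_x}^2$ is therefore at least $\sim B^2\eta^2\hat C_2\kappa T^2/\varphi_{min}^2$, not an arbitrary $\varepsilon\le 2\varphi_{min}$. Since the downstream Proposition~\ref{estimate_for_s} requires $\varepsilon\le 1/4$ to close the self-referential bound $f=s$, your argument does not deliver the stated inequality in the needed form.

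The paper avoids this entirely by testing with the \emph{weighted} energy $\varphi(g)\tilde v^2$ rather than $\tilde v^2$. Because $2\varphi b_g\,\dpiu_h g=\dpiu_h\varphi$, the Laplacian and the $\tilde v$-transport term recombine exactly into the divergence form $\tilde v\bigl[\dpiu_h(\varphi\,\dmen_h\tilde v)+\dmen_h(\varphi\,\dpiu_h\tilde v)\bigr]$, which after summation by parts gives the clean dissipation $-\int_{\Omega_h^+}\varphi\bigl[(\dmen_h\tilde v)^2+(\dpiu_h\tilde v)^2\bigr]$ with \emph{no} residual term coupling $\dpiu_h g$ to $\dpiu_h\tilde v$. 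The only remaining place where $\dpiu_h g$ appears is the source term $R_2=\int_{\Omega_h^+}\tilde v(\dpiu_h\varphi\,\dpiu_h u+\dmen_h\varphi\,\dmen_h u)$, which involves $\dpiu_h u$ but not $\dpiu_h\tilde v$; the Young parameter there is unconstrained, so the coefficient of $\|\dpiu_h f\|_{L^2_tL^2_x}^2$ can be taken as small as desired, yielding the $\varepsilon$ in \eqref{eq:stima_linearized_v}. The constraint $\varepsilon\le 2\varphi_{min}$ comes only from dividing the weighted energy inequality through by $\varphi_{min}$ at the very end. No Gr\"onwall step is needed, since the reaction term $R_1=-\int\lambda\varphi g(2-Bf)\tilde v^2$ has the correct sign and the lower-order $R_3$ is handled via $\|\tilde v\|_{L^\infty}\le 2\eta$ and $\|u\|_{L^\infty_tL^1_x}$.
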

\begin{proof}
From \eqref{eq:lin_PDE_sg_s} and \eqref{eq:linearized_v}, with a little algebra,  we obtain   
\begin{align*}
    \partial_t(\varphi \tilde{v}^2)&= 2\varphi v \partial_t \tilde{v} + \tilde{v}^2\partial_t\varphi(g)\\
    &=  \tilde{v}[\dpiu_h(\dmen_h \tilde{v})\varphi)+\dmen_h((\dpiu_h \tilde{v})\varphi)] - 2\lambda\varphi(g) g (1-Bf)\tilde{v}^2 \\
    &\quad +\tilde{v}B(\dpiu_h g \dpiu_h u+\dmen_h g\dmen_h u)-2\lambda\varphi(g) g(1-Bf)u\tilde{v}- \lambda B\varphi(g) g f \tilde{v}^2).
\end{align*} 
By integrating in time and space, we observe that  
\begin{equation*}
\begin{aligned}
  \int_{\Omega_h^+}\int_0^t \tilde{v}[\dpiu_h((\dmen_h \tilde{v})\varphi)+\dmen_h((\dpiu_h \tilde{v})\varphi)] dxds&     = -\int_{\Omega_h^+} \int_0^t [(\dmen_h \tilde{v})^2 + (\dpiu_h \tilde{v})^2)]\varphi dx ds.
\end{aligned}
\end{equation*}
Hence, we get the following equality
\begin{align*}
&  \int_{\Omega_h^+} \varphi(g)\tilde{v}^2(t,x) dx  - \int_{\Omega_h^+} \varphi(g(0,x))(\tilde{v}(0,x))^2 dx +\int_{\Omega_h^+} \int_0^t [(\dmen_h \tilde{v})^2 + (\dpiu_h \tilde{v})^2)]\varphi dx ds \\
    & = -\int_{\Omega_h^+}\int_0^t\lambda \varphi(g) g (2-Bf) \tilde{v}^2 dx ds +\int_{\Omega_h^+}\int_0^t (\dpiu_h u \dpiu_h\varphi+\dmen_h u \dmen_h \varphi)\tilde{v}\, dxds\\ 
    & \quad - \int_{\Omega_h^+}\int_0^t 2\lambda\varphi(g)g(1-Bf)\tilde{v} u\, dxds\\
    &= R_1+R_2+R_3=R_1+\left(R_2^+ +R_2^-\right)+R_3
\end{align*}
From \eqref{eq:limit_porosity} and \eqref{eq:hp_f}, we can trivially show that $R_1$ is negative. Indeed, when $B=1$ we have $2-Bf=2-f>1-f>0$ (since $f\le \eta \le 1$) and when $B=-1$ we get $2-Bf=2+f>0.$\\
 As the term $R_2=R_2^+ +R_2^-$ regards, we detail the bound only for $R_2^+$, since the $R_2^-$ term can be treat   analogously.   
By the generalized H\"older inequality    $\left(\frac{1}{1}=\frac{1}{2}+\frac{1}{2}+\frac{1}{\infty}\right), $ by \eqref{eq:continuity_u_Psi_H1} and \eqref{eq:continuity_D_hu_Psi_H1}, we get  
\begin{align*}
	|R_2^+ | & = \left|\int_{\Omega_h^+} \int_0^t (\dpiu_h u\dpiu_h \varphi)\tilde{v} dr ds \right| \leq 2 T \|\dpiu_h u\|_{L^\infty_t L_x^2} \,\|\dpiu_h \varphi\|_{L_t^\infty L_x^2}\, \|\tilde{v}\|_{L_t^{\infty}  L_x^\infty} \\
 & \le 4 \eta T |B| \|\dpiu_h u\|_{L_t^\infty L_x^2} \|\dpiu_h \varphi\|_{L_t^\infty L_x^2}\\ 
 & \le 4 c \eta |B|  T^2  \|\dpiu_h u\|_{L^\infty_t L_x^2}  \left(\|\dpiu_h c_0\|_{L^2(\Omega_h^+)} + \kappa \|\dpiu_h f\|_{L_t^2 L^2_x}\right)\\
 & \le 4 c \eta |B|  T^2  \|\dpiu_h u\|_{L^\infty_t L_x^2}\|\dpiu_h c_0\|_{L^2(\Omega_h^+)}+ \frac{4 c^2 \kappa^2 \eta^2 |B|^2  T^4}{\varepsilon'}  \|\dpiu_h u\|_{L^\infty_t L_x^2}^2 \\
 &+\varepsilon' \|\dpiu_h f\|_{L_t^2 L^2_x}^2   \le\frac{1}{\varepsilon'} C_2(\eta,\|\psi\|_{C^{\beta}([0,T])},B,T) + \varepsilon' \|\dpiu_h f\|_{L_t^2 L^2_x}^2,
\end{align*}
 where  with $C_{1}$ is a measurable increasing functions of all its variables, and $\varepsilon'>0$. 

Finally for the $R_3$ term, we apply Holder inequality with $p=1,q=\infty$ and we  carry  $||\tilde{v}||_{L^\infty_{t,x}}$ in the interpolated space $L^2([0,T], H^1)$ as follows:
\begin{align*}
	|R_3| &  = \left |\int_0^t\int_{\Omega_h^+} - 2 \lambda \varphi(g) g u (1-Bf)\tilde{v} \, dxdr\right| \\
 & \le C\lambda T ||g||_{L_t^\infty L_x^\infty}(A+|B|||g||_{L_t^\infty L_x^\infty}) ||\tilde{v}||_{L_t^{\infty} L^\infty_x} ||u||_{L^\infty_t L^1_x}\\
	& \le C\lambda T \hat{C}_1(C_0,A,B) (A+|B|\hat{C}_1(C_0,A,B)) \eta  ||u||_{L^\infty_t L^1_x} \\
    &\leq \frac{1}{\varepsilon'} C_{3}(\lambda,T,C_0,A,B,\eta,\|\psi \|_{C^{\beta}([0,T])}) 
\end{align*}
where  $C_{3}$ is a measurable increasing function and $\epsilon'\leq 1$.
 
Hence, by means of the estimates of the terms $R_1$-$R_3$ and by considering the lower bound in \eqref{eq:limit_porosity}  we get a discrete $L^2-$ bound for $\tilde{v}$ and its (symmetric) difference derivative
\begin{equation*}
\begin{aligned}
  \int_{\Omega_h^+} \tilde{v}_t^2 dx + & \int_{\Omega_h^+} \int_0^T  [(\dpiu_h \tilde{v})^2 + (\dmen_h \tilde{v})^2]  \le \\
    & \le \int_{\Omega_h^+} \frac{\varphi(g_t)}{\varphi_{min}} \tilde{v}_t^2 dx + \int_{\Omega_h^+}\int_0^t \frac{\varphi(g)}{\varphi_{min}} \left[(\dmen_h \tilde{v} )^2 + (\dpiu_h \tilde{v})^2\right] dr dx \\
    & \le\frac{2 C_2 +C_3}{\epsilon' \varphi_{min}} +\frac{2\varepsilon'}{\varphi_{min}}  \|\dpiu_h f\|^2_{L_t^2 L_x^2}.\end{aligned}
\end{equation*}
By writing $\varepsilon =2\frac{\epsilon'}{\varphi_{min}}$ (which transform the condition $\varepsilon' \leq 1$, into $\varepsilon \leq 2 \varphi_{min}$) and by choosing $\bar{C}=\frac{2C_2+C_3}{2}$ we obtain
\begin{equation*}
\begin{aligned}
\int_{\Omega_h^+} \tilde{v}_t^2 dx + & \int_{\Omega_h^+} \int_0^T  [(\dpiu_h \tilde{v})^2 + (\dmen_h \tilde{v})^2]  \le  \frac{1}{\varepsilon}\bar{C} +\varepsilon \|\dpiu_h f\|^2_{L_t^\infty L_x^2}.
\end{aligned}
\end{equation*}
\end{proof}

\begin{proposition}[Estimates for the linearized equation in $s$]\label{estimate_for_s}
Let $\tilde{s}$ be the solution of \eqref{eq:lin_PDE_sg_s}. Then for $T>0$ the following estimate on $\tilde{s}$ holds, for any $f$ satisfying \eqref{eq:hp_f}, we have
\begin{equation}\label{eq:estimation_linearzed_s}
    \sup_{t\in [0,T]} \|\tilde{s}(t,\cdot)\|_{L^2(\Omega_h^+)}^2 + \int_0^T\|\dpiu_h \tilde{s}(t,\cdot)\|^2_{L^2(\Omega_h^+)} dt\leq 
    \mu + \frac{1}{2} \|\dpiu_h f\|^2_{L_t^2 L_x^2},
\end{equation}
where $\mu$ is a measurable increasing function of $\eta, \lambda, C_0,\varphi_{min}, B,T,  \|\psi\|_{C^\beta_t}  $ in Assumptions \eqref{assumption:_bound_s0_c0_varphi} and in conditions \eqref{eq:hp_f}  but not on $K$ in \eqref{eq:hp_f}. In particular, by choosing $K$ in \eqref{eq:hp_f}  such that $K\geq 2\mu(\eta, \lambda, C_0,\varphi_{min}, B,T,  \|\psi\|_{C^\beta_t})$, we get
    \begin{equation}\label{eq:estimation_linearzed_s_2}
        \sup_{t\in [0,T]} \|\tilde{s}(t,\cdot)\|_{L^2(\Omega_h^+)}^2 + \int_0^T\|\dpiu_h \tilde{s}(t,\cdot)\|^2_{L^2(\Omega_h^+)} dt \le K.
    \end{equation}
\end{proposition}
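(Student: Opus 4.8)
The plan is to combine the a priori bounds already established for the two summands in the splitting $\tilde s = u+\tilde v$. Proposition~\ref{proposition:estimate_u}, together with the companion bound $\|\dpiu_h u\|_{L_t^\infty L_x^2}\le C_1'\|\psi\|_{C^\beta([0,T])}$, controls $u$ in $L_t^\infty H_x^1$ by a constant times $\|\psi\|_{C^\beta([0,T])}$, with constants independent of $h$ and of $K$; Proposition~\ref{proposition:estimates_v} controls $\sup_{t\in[0,T]}\|\tilde v(t,\cdot)\|_{L_x^2}^2$ and $\int_0^T\big(\|\dpiu_h\tilde v\|_{L_x^2}^2+\|\dmen_h\tilde v\|_{L_x^2}^2\big)\,ds$ by $\tfrac1\varepsilon\bar C+\varepsilon\|\dpiu_h f\|_{L_t^2L_x^2}^2$ for every $\varepsilon\le 2\varphi_{min}$, with $\bar C$ not depending on $K$.

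First I would apply $(a+b)^2\le 2a^2+2b^2$ pointwise in $(t,x)$ to obtain $\|\tilde s(t,\cdot)\|_{L_x^2}^2\le 2\|u(t,\cdot)\|_{L_x^2}^2+2\|\tilde v(t,\cdot)\|_{L_x^2}^2$ and $\|\dpiu_h\tilde s(t,\cdot)\|_{L_x^2}^2\le 2\|\dpiu_h u(t,\cdot)\|_{L_x^2}^2+2\|\dpiu_h\tilde v(t,\cdot)\|_{L_x^2}^2$, then take the supremum over $t\in[0,T]$ in the first and integrate over $[0,T]$ in the second. The terms involving $u$ are bounded by $2\|u\|_{L_t^\infty H_x^1}^2$ and $2T\|\dpiu_h u\|_{L_t^\infty L_x^2}^2$, hence by $C(1+T)\|\psi\|_{C^\beta([0,T])}^2$ through Proposition~\ref{proposition:estimate_u}; each of the two terms involving $\tilde v$ is bounded by $2\big(\tfrac1\varepsilon\bar C+\varepsilon\|\dpiu_h f\|_{L_t^2L_x^2}^2\big)$ through Proposition~\ref{proposition:estimates_v}. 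Summing the four contributions yields $\sup_{t\in[0,T]}\|\tilde s(t,\cdot)\|_{L_x^2}^2+\int_0^T\|\dpiu_h\tilde s(t,\cdot)\|_{L_x^2}^2\,dt\le C(1+T)\|\psi\|_{C^\beta([0,T])}^2+\tfrac4\varepsilon\bar C+4\varepsilon\|\dpiu_h f\|_{L_t^2L_x^2}^2$. To reach the coefficient $\tfrac12$ in front of $\|\dpiu_h f\|_{L_t^2L_x^2}^2$ I would choose $\varepsilon:=\min\{1/8,\,2\varphi_{min}\}$, which respects the constraint of Proposition~\ref{proposition:estimates_v} and forces $4\varepsilon\le\tfrac12$, and set $\mu:=C(1+T)\|\psi\|_{C^\beta([0,T])}^2+\tfrac4\varepsilon\bar C$. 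Since neither $C$ nor $\bar C$ depends on $K$, the quantity $\mu$ is an increasing measurable function of $\eta,\lambda,C_0,\varphi_{min},B,T,\|\psi\|_{C^\beta([0,T])}$ alone, which gives \eqref{eq:estimation_linearzed_s}. For \eqref{eq:estimation_linearzed_s_2}, any $f$ satisfying \eqref{eq:hp_f} obeys $\|\dpiu_h f\|_{L_t^2L_x^2}^2\le K$, so if $K\ge 2\mu$ then $\mu+\tfrac12\|\dpiu_h f\|_{L_t^2L_x^2}^2\le\tfrac K2+\tfrac K2=K$.

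The argument is essentially bookkeeping once the two input propositions are in hand; the only point requiring attention — and the reason Proposition~\ref{proposition:estimates_v} was stated with a constant $\bar C$ independent of $K$ and with a free parameter $\varepsilon$ — is that $\mu$ must remain independent of $K$, so that $K$ can be chosen large afterwards to close the estimate. This self-consistent choice of $K$ is precisely what will later allow a fixed-point argument for the full nonlinear system (the map $f\mapsto\tilde s$ preserving the ball of radius $K$ in the relevant norm). I do not foresee a genuine obstacle here: the delicate analysis has all been absorbed into Propositions~\ref{proposition:estimate_u} and~\ref{proposition:estimates_v}.
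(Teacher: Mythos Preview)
Your proposal is correct and follows essentially the same route as the paper: split $\tilde s=u+\tilde v$, apply $(a+b)^2\le 2a^2+2b^2$, invoke Propositions~\ref{proposition:estimate_u} and~\ref{proposition:estimates_v}, and then fix $\varepsilon$ small enough to make the coefficient of $\|\dpiu_h f\|_{L_t^2L_x^2}^2$ equal to $\tfrac12$, defining $\mu$ as the remaining constant. The paper chooses $\varepsilon=\min(\tfrac14,2\varphi_{min})$ rather than your $\min(\tfrac18,2\varphi_{min})$ (a harmless bookkeeping difference in how the factor of $2$ from the splitting is tracked), and your observation that $\mu$ must be independent of $K$ so that the map $f\mapsto\tilde s$ preserves the ball is exactly the point.
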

\begin{proof}
Combining the estimates obtained for $u$ in \eqref{eq:continuity_u_Psi_H1} 
 and \eqref{eq:continuity_D_hu_Psi_H1}, and $\tilde{v}$ in \eqref{eq:stima_linearized_v} we can give a bound on the linearized solution $\tilde{s}=u+\tilde{v}$ only depending on the final time $T$:
\begin{align*}
  \sup_{t\in [0,T]} \int_{\Omega_h^+}\tilde{s}^2(t,x) dx& + \int_{\Omega_h^+} \int_0^T  [(\dpiu_h \tilde{s})^2 + (\dmen_h \tilde{s})^2] dx dt \le 2 \sup_{t\in [0,T]}\int_{\Omega_h^+} u(t,x)^2 dx \\
  &+ 2\int_{\Omega_h^+} \int_0^T  [(\dpiu_h u)^2 + (\dmen_h u)^2] dxdt \notag\\ 
   & + 2 \sup_{t\in [0,T]}\int_{\Omega_h^+} \tilde{v}(t,x)^2 dx + 2\int_{\Omega_h^+} \int_0^T  [(\dpiu_h \tilde{v})^2 + (\dmen_h \tilde{v})^2] dtdx \\ 
   & \le C \|\psi\|_{C^\beta_t}^2 + \frac{4}{\varepsilon}\bar{C} +2\varepsilon  \|\dpiu_h f\|^2_{L_t^\infty L_x^2}\label{eq:boundsf}
\end{align*}
If we choose $\varepsilon=\min\left(\frac{1}{4},2 \varphi_{min}\right)$ and $\mu=C \|\psi\|_{C^\beta_t}^2 + \frac{4}{\varepsilon}\bar{C}$. With this choice, we obtain inequality \eqref{eq:estimation_linearzed_s}. 
Finally, inequality \eqref{eq:estimation_linearzed_s_2} easily follows from \eqref{eq:estimation_linearzed_s} and $K\ge 2 \mu.$
\end{proof}
\begin{proposition}[Estimates for the nonlinear equation in $s$]\label{remark:fs}
Let $s$ be the  solution of \eqref{eq:system_s_c_discrete}. Then, for $T>0$ the following estimate on $s$ holds for some constant $K>0$ (big enough depending on the data of the problem but independent of $h$):

\begin{equation}\label{eq:estimation_linearzed_s_3} 
        \sup_{t\in [0,T]} \|s(t,\cdot)\|_{L^2(\Omega_h^+)}^2 + \int_0^T\|\dpiu_h s(t,\cdot)\|^2_{L_x^2} dt \le K
    \end{equation} \end{proposition}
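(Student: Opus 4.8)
The plan is to deduce \eqref{eq:estimation_linearzed_s_3} from Proposition \ref{estimate_for_s}, which was engineered precisely as the self-consistent form of the energy estimate, by identifying $s$ with a fixed point of the linearization map. Fix $\mu=\mu(\eta,\lambda,C_0,\varphi_{min},B,T,\|\psi\|_{C^\beta_t})$ as in Proposition \ref{estimate_for_s} and set $K:=3\mu$ (any $K>2\mu$ works). Let $\mathcal{C}$ denote the set of Borel functions $f\colon[0,T]\times\Omega_h^+\to\R$ satisfying \eqref{eq:hp_f} with this constant $K$; then $\mathcal{C}$ is closed, bounded and convex in $C([0,T],L^2(\Omega_h^+))$, and membership in $\mathcal{C}$ is exactly the assertion \eqref{eq:estimation_linearzed_s_3} together with $0\le\,\cdot\,\le\eta$ and $\,\cdot\,(\cdot,0)=\psi$. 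Write $\Phi(f)=\tilde{s}$ for the solution of the linearized system \eqref{eq:lin_PDE_sg_s}. Then $\Phi(\mathcal{C})\subseteq\mathcal{C}$: Proposition \ref{proposition:sbounds} gives $0\le\tilde{s}\le\eta$, the boundary value $\tilde{s}(\cdot,0)=\psi$ and the time-continuity $\tilde{s}\in C([0,T],L^2(\Omega_h^+))$ follow from the Duhamel representation of \eqref{eq:linearized_v} and the semigroup smoothing of Lemma \ref{lem:heat_kernel_regu}, while \eqref{eq:estimation_linearzed_s_2} yields $\|\tilde{s}\|^2_{C([0,T],L^2)}+\|\dpiu_h\tilde{s}\|^2_{L^2([0,T],L^2)}\le\mu+\tfrac12K\le K$.

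First I would promote this self-map to a fixed point. The map $\Phi$ is continuous on $\mathcal{C}$ for the $C([0,T],L^2(\Omega_h^+))$ topology (standard stability estimates for the linear parabolic problem \eqref{eq:lin_PDE_sg_s}, combined with the control of $g$ and $\dpiu_h g$ from Proposition \ref{proposition:estimate_g} and the $L^\infty$ bounds on $u$ and $\tilde{v}$), and $\Phi(\mathcal{C})$ is relatively compact in $C([0,T],L^2(\Omega_h^+))$: indeed $\tilde{v}=\tilde{s}-u$ is uniformly bounded in a time–space Besov space $B^{\bar\alpha}_{r,r}([0,T],B^{\alpha_1}_{p_1,q_1}(\Omega_h^+))$ — the extra spatial regularity coming from the discrete heat smoothing of Lemma \ref{lem:heat_kernel_regu} applied to the Duhamel formula for \eqref{eq:linearized_v}, the extra time regularity from reading $\partial_t\tilde{v}$ off the equation — which embeds compactly into $C([0,T],L^2(\Omega_h^+))$ by the compact embedding result for time–space Besov spaces recalled above. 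Schauder's theorem then provides $s^\ast\in\mathcal{C}$ with $\Phi(s^\ast)=s^\ast$; since the explicit $g$ in \eqref{eq:explicit_g} built from $f=s^\ast$ coincides with the $c$ in \eqref{eq:c_discrete} built from $s^\ast=u+\tilde v$, the pair $(s^\ast,c)$ solves \eqref{eq:system_s_c_discrete}, and a Gronwall argument on the difference of two solutions gives uniqueness, so $s=s^\ast\in\mathcal{C}$. This is \eqref{eq:estimation_linearzed_s_3}; and because every constant feeding $\mu$ originates from Propositions \ref{proposition:estimate_u}, \ref{proposition:estimate_g}, \ref{proposition:estimates_v} and the $h$-uniform heat-kernel bounds of \cite{devecchi_nicolay_2021elliptic}, the constant $K$ is independent of $h$.

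If instead the existence of $s$ on $[0,T]$ is granted, the same conclusion follows by a continuation argument: set $T^\ast=\sup\{t\le T:\ \sup_{[0,t]}\|s(\tau,\cdot)\|^2_{L^2(\Omega_h^+)}+\int_0^t\|\dpiu_h s(\tau,\cdot)\|^2_{L^2(\Omega_h^+)}\,d\tau\le K\}$. On $[0,T^\ast]$ the choice $f=s$, $g=c$ satisfies \eqref{eq:hp_f} — the bound $0\le s\le\eta$ being obtained from the Feynman–Kac/comparison representation of Proposition \ref{proposition:sbounds} applied to \eqref{eq:system_s_c_discrete}, whose reaction exponent $\lambda c\,s(Bs-1)$ is nonpositive once $B<1/\eta$ — so Proposition \ref{estimate_for_s} gives $\sup_{[0,T^\ast]}\|s\|^2_{L^2}+\int_0^{T^\ast}\|\dpiu_h s\|^2_{L^2}\le\mu+\tfrac12\int_0^{T^\ast}\|\dpiu_h s\|^2_{L^2}$, whence this quantity is $\le2\mu<K$; continuity of $t\mapsto\sup_{[0,t]}\|s\|^2_{L^2}+\int_0^t\|\dpiu_h s\|^2_{L^2}$ then forces $T^\ast=T$, and the a priori bound doubles as a global existence statement.

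The hard part is the compactness input and the $L^\infty$ bound, which are the only genuinely delicate ingredients. Since $\Omega_h^+$ is an \emph{unbounded} lattice, Besov embeddings are not compact on their own, so the Besov bound on $\tilde v$ must be complemented by tightness at spatial infinity, inherited from the decay encoded in $s_0$ and $C_0-c_0$ through \eqref{eq:assumption_bound_s0_c0_varphi_1}–\eqref{eq:assumption_bound_s0_c0_varphi_2} and propagated by the heat semigroup. The bound $0\le s\le\eta$ is only apparently circular — the sign of the reaction term depends on it — and is resolved either automatically along the fixed-point route (every iterate $\Phi(f)$ lies in $[0,\eta]$ by Proposition \ref{proposition:sbounds}, hence so does the limit) or, in the continuation route, by first solving a truncated equation whose Feynman–Kac exponent is unconditionally nonpositive and then verifying that the truncation never activates.
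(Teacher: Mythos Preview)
Your proposal is correct, and your second (continuation) argument is precisely the paper's proof, just spelled out in full. The paper's proof is the one-liner ``take $f=s$ in Proposition \ref{estimate_for_s} and apply \eqref{eq:estimation_linearzed_s_2}'': since for $f=s$ one has $g=c$ and $\tilde s=s$, inequality \eqref{eq:estimation_linearzed_s} becomes $\sup_t\|s\|_{L^2}^2+\int_0^T\|\dpiu_h s\|_{L^2}^2\le\mu+\tfrac12\int_0^T\|\dpiu_h s\|_{L^2}^2$, and absorbing gives \eqref{eq:estimation_linearzed_s_3} with $K=2\mu$. Crucially $\mu$ does not depend on the constant $K$ in \eqref{eq:hp_f}, so the bootstrap closes.

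Your Schauder fixed-point route is a genuinely different and heavier argument. It is not needed for the proposition as stated, since $s$ is \emph{assumed} to exist; the paper postpones existence to Theorem \ref{well-posedness_semi_discrete}, where it is obtained by a local Cauchy--Lipschitz argument (all lattice norms being equivalent for fixed $h$) and then extended globally using the very a priori bound \eqref{eq:estimation_linearzed_s_3}. Your Schauder approach would merge these two steps, at the cost of having to verify continuity of $\Phi$ and, as you correctly flag, compactness of $\Phi(\mathcal C)$ in $C([0,T],L^2(\Omega_h^+))$ on an unbounded lattice --- neither of which the paper needs or provides. You are also right that the $L^\infty$ bound $0\le s\le\eta$ is only superficially circular (and that the paper is silent on this point); your proposed resolutions via the fixed-point iterates or via truncation are both standard and adequate.
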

\begin{proof}
    By Proposition \ref{estimate_for_s} one can easily derive that the same estimate is true for the nonlinear system, taking $f=s$ and applying  inequality \eqref{eq:estimation_linearzed_s_2}.
\end{proof}
\begin{remark}
    The bound \eqref{eq:estimation_linearzed_s_3}  on $s$ in Proposition \ref{remark:fs} , which is uniform in $0< h \leq 1$, guarantees that the solution $(s,c)$ to equation \eqref{eq:system_s_c_discrete} exists in $(L^2([0,T], H^1(\Omega_h^+))\cap L^{\infty}([0,T],L^2(\Omega_h^+)))^2$. 
\end{remark}

\begin{proposition}\label{proposition:ch}
Consider the solution $c$ to the second equation in the space-discrete system
\eqref{eq:system_s_c_discrete} with $ c_0 \in H^1(\Omega^+_h).$ The norm $\|c\|_{L^{\infty}([0,T],H^{1}(\Omega_h^+))}$  is uniformly bounded with respect to $0<h\leq 1$.
\end{proposition}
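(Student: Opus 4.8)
The plan is to identify $c$ with the auxiliary function $g$ of Proposition~\ref{proposition:estimate_g} for the particular choice $f=s$, and then simply read off the claimed bound from the estimates \eqref{eq:dpiu_g_estimates} that have already been proved there; the only point that really needs attention is that the constants produced do not depend on the mesh parameter $h$.

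First I would verify that the nonlinear solution $s$ of \eqref{eq:system_s_c_discrete} is an admissible datum in the sense of \eqref{eq:hp_f}. Indeed $s\ge 0$ and $\|s\|_{L^\infty([0,T]\times\Omega_h^+)}\le\eta$ by Proposition~\ref{proposition:sbounds} (applied with $f=s$, so that $s=\tilde s$), together with Proposition~\ref{proposition:heatbounds} and Corollary~\ref{corollary:vbounds}; the boundary identity $s(\cdot,0)=\psi$ holds by construction; and, by Proposition~\ref{remark:fs} and the regularity noted in the remark following it, $s\in C([0,T],L^2(\Omega_h^+))\cap L^2([0,T],W^{1,2}(\Omega_h^+))$ with
\[
\|s\|_{C([0,T],L^2(\Omega_h^+))}^2+\|\dpiu_h s\|_{L^2([0,T],L^2(\Omega_h^+))}^2\le K,
\]
where the constant $K$ in \eqref{eq:estimation_linearzed_s_3} is uniform in $0<h\le1$. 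Moreover the explicit formula \eqref{eq:c_discrete} for $c$ is exactly \eqref{eq:explicit_g} evaluated at $f=s$, so $c=g$ for this choice.

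Next I would apply Proposition~\ref{proposition:estimate_g}: since the standing Assumptions~\ref{assumption:_bound_s0_c0_varphi} on $c_0$ hold and $s$ satisfies \eqref{eq:hp_f}, the three estimates in \eqref{eq:dpiu_g_estimates} hold for $c$ with $f=s$, namely $\|c\|_{L^\infty([0,T]\times\Omega_h^+)}\le\hat C_1$, a bound for $\|C_0-c\|_{L^\infty_t L^2_x}$ in terms of $\|C_0-c_0\|_{L^2(\Omega_h^+)}$ and $\|s\|_{C([0,T],L^2(\Omega_h^+))}$, and a bound for $\|\dpiu_h c\|_{L^\infty_t L^2_x}$ in terms of $\|\dpiu_h c_0\|_{L^2(\Omega_h^+)}$ and $\|\dpiu_h s\|_{L^2_t L^2_x}$. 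Inserting $\|s\|_{C([0,T],L^2(\Omega_h^+))}^2\le K$ and $\|\dpiu_h s\|_{L^2_t L^2_x}^2\le K$, and using that $\|C_0-c_0\|_{L^2(\Omega_h^+)}$ and $\|\dpiu_h c_0\|_{L^2(\Omega_h^+)}$ are finite by the hypothesis $c_0\in H^1(\Omega_h^+)$ (equivalently $C_0-c_0\in W^{1,2}(\Omega_h^+)$), I obtain
\[
\sup_{t\in[0,T]}\|c(t,\cdot)\|_{H^1(\Omega_h^+)}\le \mathcal C\big(\eta,\lambda,C_0,\varphi_{min},\varphi_{max},B,T,\|\psi\|_{C^\beta([0,T])}\big),
\]
with $\mathcal C$ increasing in its arguments and independent of $h$.

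The only delicate point — and the one I would emphasise — is precisely this $h$-uniformity: every constant entering the argument ($k,\kappa,\hat C_1,\hat C_2$ in \eqref{eq:dpiu_g_estimates}, the $L^\infty$ bound $\eta$ for $s$, and the a priori constant $K$) depends solely on the problem data, and $h$ is involved only through Proposition~\ref{remark:fs}, whose estimate \eqref{eq:estimation_linearzed_s_3} is itself uniform in $0<h\le1$. Thus no new estimate is really needed here; the substantive computation has already been carried out in Propositions~\ref{proposition:estimate_g} and~\ref{remark:fs}, and the role of this proof is simply to propagate their $h$-uniformity to the $H^1$ norm of $c$.
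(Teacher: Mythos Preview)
Your proposal is correct and follows essentially the same route as the paper: identify $c$ with $g$ from Proposition~\ref{proposition:estimate_g} by taking $f=s$, and then invoke the estimates \eqref{eq:dpiu_g_estimates} together with the uniform bound on $s$ from Proposition~\ref{remark:fs}. You are more careful than the paper in verifying that $s$ satisfies the hypotheses \eqref{eq:hp_f} and in tracking the $h$-independence of all constants, but the underlying argument is the same.
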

\begin{proof}
    Starting from the explicit definitions for $c$ in \eqref{eq:c_discrete} and for $g$ in \eqref{eq:explicit_g} and by imposing $f=s$,  we obtain that $g=c$. Thus, from \eqref{eq:dpiu_g_estimates} and the uniform bound for $s$ in \eqref{eq:estimation_linearzed_s_3} , we can provide the following bound for $c$:
    \begin{equation*}
        \sup_{[0,T]}\|\dpiu_h c\|^2_{L^2(\Omega_h^+)} \le c T \left(\|\dpiu_h c_0\|_{L^2(\Omega_h^+)}^2 + \kappa \|\dpiu_h s\|^2_{L_t^\infty L_x^2}\right)
    \end{equation*}
\end{proof}

 \subsection{Well-posedness of the nonlinear space-discrete model}
In this section we prove the existence and uniqueness of a weak solution to the semi-discrete system \eqref{eq:system_s_c_discrete}. 

\begin{theorem}\label{well-posedness_semi_discrete}
    Let us consider the system \eqref{eq:system_s_c_discrete} on $[0,T_{fin}]\times \Omega_h^+$. Suppose that Assumptions \ref{assumption:_bound_s0_c0_varphi} and condition  \eqref{eq:regularity_boundary_process} are satisfied.  
    For any $T\in [0,T_{fin}]$, there exists a pathwise unique weak solution $(s,c): [0,T]\times \Omega_h^+\rightarrow \mathbb R$.
\end{theorem}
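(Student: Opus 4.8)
The plan is to fix a realization of $\psi$ (so the system becomes deterministic, with $\psi$ enjoying the pathwise regularity \eqref{eq:regularity_boundary_process}) and to fix $h>0$, which makes $\Delta_h,\dpiu_h,\dmen_h$ bounded linear operators on $L^2(\Omega_h^+)$. First I would reduce the system to a single nonlinear evolution equation for the splitted variable $v$: by Proposition~\ref{prop:solution_random_system} the function $u$ is explicitly known and, by Propositions~\ref{proposition:heatbounds} and~\ref{proposition:estimate_u}, bounded together with $\dpiu_h u,\dmen_h u$, while $c$ is the explicit functional \eqref{eq:c_discrete} of $s=u+v$. Hence the only genuine unknown is $v$, solving \eqref{eq:nonlinear_discrete_system_v} with zero boundary datum, initial condition $v(0,\cdot)=s_0$, and a bounded forcing assembled from $u$. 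The strategy is local existence and uniqueness by contraction, followed by globalisation through the a priori estimates already established, followed by uniqueness on $[0,T]$ via a Grönwall argument.

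\emph{Local well-posedness.} For fixed $h$ one has the lattice embedding $L^2(\Omega_h^+)\hookrightarrow L^\infty(\Omega_h^+)$ (with an $h$-dependent constant). I would check that $v\mapsto c=c[v]$ given by \eqref{eq:c_discrete} is locally Lipschitz on balls of $C([0,\delta],L^2(\Omega_h^+))$ and keeps $A+Bc$ bounded away from $0$ — using the positivity and boundedness of $s$ (cf. Lemma~\ref{lemma:FK} and Proposition~\ref{proposition:sbounds}) and the bounded-porosity normalisation \eqref{eq:limit_porosity} — and that consequently the whole right-hand side of \eqref{eq:nonlinear_discrete_system_v}, including the transport terms $b_c[\dpiu_h c\,\dpiu_h v+\dmen_h c\,\dmen_h v]$ and the quadratic term $\gamma_c B(u+v)^2$, is a locally Lipschitz nonlinearity. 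Writing the equation in mild (Duhamel) form via the discrete heat semigroup \eqref{eq:heat_semigroup_exp}, a Banach fixed point argument then produces, for $\delta=\delta(h,\text{data})>0$ small, a unique solution, hence a unique maximal solution on some $[0,T_{max})$.

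\emph{Globalisation, uniqueness, conclusion.} Since its derivation only uses the equation on the interval of existence, the a priori bound of Proposition~\ref{remark:fs} (Proposition~\ref{estimate_for_s} with $f=s$ and $K\ge 2\mu$) gives $\sup_{t<T_{max}\wedge T}\|s(t,\cdot)\|_{L^2(\Omega_h^+)}^2+\int_0^{T_{max}\wedge T}\|\dpiu_h s\|_{L^2(\Omega_h^+)}^2\,dt\le K$ with $K$ independent of $t$; together with $0\le s\le\eta$ (Proposition~\ref{proposition:sbounds}) and the control of $\|c\|_{L^\infty_tH^1_x}$ (Proposition~\ref{proposition:ch}), and since $v=s-u$ with $u$ bounded, the $L^2$-norm of $v$ cannot blow up, so $T_{max}\ge T$. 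For uniqueness, given two weak solutions with the same data I would set $w=s_1-s_2$, test the difference of the two $s$-equations against $w$, absorb the $\dpiu_h w,\dmen_h w$ contributions on the left and bound the rest by $C\|w\|_{L^2(\Omega_h^+)}^2$ (using the uniform $L^\infty$, $H^1$ bounds and boundedness of the discrete operators), whence Grönwall forces $w\equiv0$ and the explicit formula gives $c_1=c_2$. By Lemma~\ref{weak_versus_mild} weak and mild solutions coincide, so the mild solution built above is the pathwise unique weak solution $(s,c)\colon[0,T]\times\Omega_h^+\to\mathbb{R}$.

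\emph{Main obstacle.} The delicate point is the uniform control of the Lipschitz constants: $b_c$ and $\dpiu_h c$ depend on the unknown through \eqref{eq:c_discrete}, so one must verify that \eqref{eq:limit_porosity} is preserved along the iteration (keeping $A+Bc$ away from $0$) and that the quadratic term is handled on the correct $L^2$-balls, so that both the local existence time and the globalisation bootstrap close. An alternative is the Schauder route suggested by the estimates themselves: Proposition~\ref{estimate_for_s} with $K\ge 2\mu$ shows that the solution map $f\mapsto\tilde s$ of the linearised system \eqref{eq:lin_PDE_sg_s} sends the set $\mathcal K$ cut out by \eqref{eq:hp_f} into itself, and a fixed point solves \eqref{eq:system_s_c_discrete}; there the obstacle moves to compactness of the image of $\mathcal K$, which would require upgrading the $L^2_tH^1_x$ bound to a fractional-in-time bound in a time–space Besov space (using that, for fixed $h$, $\partial_t\tilde s$ is controlled in a weaker norm) and then invoking the compact embedding proposition, with extra care for the unbounded spatial lattice.
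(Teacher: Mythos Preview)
Your proposal is correct and follows essentially the same route as the paper: for fixed $h$ the discrete difference operators are bounded on $L^2(\Omega_h^+)$, so the right-hand side is a locally Lipschitz map and a Picard/contraction argument gives a unique local solution, which the a priori estimates of Propositions~\ref{remark:fs} and~\ref{proposition:ch} then extend to $[0,T_{fin}]$. The paper is more direct, applying Cauchy--Picard to the full $(s,c)$ system viewed as an ODE in $L^2(\Omega_h^+)^2$ without passing through the splitting $s=u+v$, the mild/Duhamel formulation, a separate Gr\"onwall uniqueness step, or the Schauder alternative; the extra machinery you introduce is harmless but not needed at this stage.
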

\begin{proof}
Since for any fix $h\in \R_+, h>0$, all the Besov spaces on the lattice $\Omega_h^+$ are equivalent, with equivalence constants exploding as $h \rightarrow 0$, for obtaining the (local in time) well-posedness it is enough to check that the map
\begin{equation*}
    (s,c) \mapsto \frac{1}{2\varphi(c)}\left[\dpiu_h(\varphi(c)\dmen_h s)+\dmen_h(\varphi(c)\dpiu_h s)\right] -\lambda s c + B\lambda c s^2
\end{equation*}
is locally Lipschitz in the $L^2(\Omega_h^+)$ norm. But this is obvious, since it is a composition of smooth maps and, for any fixed $h>0$, the difference derivatives $\dpiu_h(\varphi(c)\dmen_h s)+\dmen_h(\varphi(c)\dpiu_h s)$ are smooth operators on $L^2(\Omega_h^+)$. The (local in time) well-posedness then follows from the Cauchy (Picard's) theorem till some  maximal time $T_h$ (depending on the initial condition). Combining this result with {the a priori estimates} obtained in Section \ref{section:apriori} (Proposition \ref{remark:fs} and Proposition \ref{proposition:ch} ), we can prove that $T_h=T_{fin}$, for any $T_{fin}\in \R_+$.
\end{proof}

\section{Convergence results}\label{convergence_result}

The main result of the paper regards the convergence of   the weak solution of the space-discrete system \eqref{eq:system_s_c_discrete} to the weak solution of the original system \eqref{eq:s2}-\eqref{eq:c2}, as the spatial discretization step $h$ tends to 0. Furthermore, we consider a fully discrete system, exploited in \cite{2024AMU_numerico}, and prove its convergence to the space-discrete. In such a way, we also gain the convergence of the numerical scheme to the solution of the continuum model, as space and time meshes shrink to zero. The main result is the first convergence result, i.e. the convergence  of the solution of the space-discrete system, which is shown by compactness arguments within the Besov spaces theory. 

\subsection{Interpolation and discretization}
Given, the space mesh size $h>0$, we denote by $(s_h,c_h):[0,T] \times \Omega_h^+ \rightarrow \mathbb R_+^2$ the solution of the space-discrete scheme \eqref{eq:system_s_c_discrete}, while $(s,c):[0,T] \times \Omega^+ \rightarrow \mathbb R_+^2$   is the solution of  system \eqref{eq:s2} - \eqref{eq:c2}. In the following we denote by $(s_{h,0},c_{h,0})$ and $(s_0,c_0)$ the initial function for the space-discrete and continuum system respectively. 

In order to extend   $s_h,c_h$ to $[0,T] \times \Omega$, a standard technique is to apply the  piecewise-linear interpolation. Since we have to consider the composition of  $(s_h,c_h)$ with some non-linear functions, it is more useful to adopt the \emph{piecewise-constant} extension.  Hence, we first define a piecewise constant \emph{extension operator}  able to extend the \textit{a priori} estimates of functions on $\Omega_h^+$ introduced in Section \ref{sec:space-discrete_setting}    to the continuum space $\Omega = \R_+$   \cite{devecchi_nicolay_2021elliptic,vladimir_montean,Ladyzenskaya}. We also recall some relevant properties of the piecewise-constant extension operator and other preliminary technical results. 

\begin{definition}[Piecewise-constant extension operator]\label{piece-contant-extension-operator}
    Let $w: \Omega_h^+ \to \R $ be a lattice function. Then, for every $x\in \Omega$, we define
    \begin{equation*}
        \mathcal{E}_h(w)(x):=\sum_{z\in \Omega_h^+} w(z) \mathbb{I}_{[z,z+h)}(x),
    \end{equation*}
    where $\mathbb{I}_K$ denotes the indicator function of the subset $K\subset \Omega$.
    We call the operator $\mathcal{E}_h$ the piecewise-constant  {extension} operator.
\end{definition}

\begin{theorem}[Extension operator properties]\label{theorem:extension}
Let $0 < s \leq 1$. The operator $\mathcal{E}_h$ is continuous from
$W^{s, p} (\Omega_h^+)$ to $B^{s \wedge \frac{1}{p}}_{p, \infty}
  (\Omega)$.
  
  \begin{itemize}
  \item[i)] For any $p \in [1, + \infty)$  we have that the operator norm is uniformly bounded in $0 < h \leq 1, $ i.e.
  \[ \sup_{0 < h \leq 1} \| \mathcal{E}_{h}
     \|_{\mathcal{L} (W^{s, p}(\Omega_h^+), B^{s \wedge \frac{1}{p}}_{p,
     \infty} (\mathbb{R}_+))} < + \infty,\]
     where $ \mathcal{L}$ denotes the Banach space of linear and bounded operators equipped with the natural norm.
  \item[ii)] In the case $s = 1$, $\mathcal{E}_{h}$ is continuous from $W^{1, p} (\Omega_h^+)$ into $B^{\frac{1}{p}}_{p,p} (\Omega)$ with the operator norm uniformly bounded for $0 < h \leq 1$.
  \smallskip
\item[iii)] 
  Let
  $ f_h $ be a sequence of functions in $ \, W^{s, p}
  (\Omega_h^+)$ such that  
  $$ \sup_{0 < h \leq 1} 
  \|  f_h \|_{W^{s, p} (\Omega_h^+)} < + \infty, \qquad \mathcal{E}_h (f_h) \rightarrow f\quad in \quad
  L^p (\Omega).$$
  Then $f \in B^s_{p, \infty}
  (\Omega)$. 
  \smallskip
  \item[iv)] 
  When $\, s = 1$ and $\, 1 < p < + \infty$ then $f \in
  W^{1, p} (\Omega)$, where $W^{1, p}(\Omega)$ is the Sobolev space of one time weakly differentiable functions with weak derivatives in $L^p (\Omega)$.
  \end{itemize}
\end{theorem}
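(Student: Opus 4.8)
The plan is to replace the Littlewood--Paley definition of the Besov spaces on $\R$ by the equivalent $L^p$-modulus-of-smoothness description: with $\omega_p(g,t)=\sup_{|\tau|\le t}\|g(\cdot+\tau)-g\|_{L^p(\R)}$ and, at the endpoint smoothness $1$, the second-order modulus $\omega^2_p(g,t)=\sup_{|\tau|\le t}\|g(\cdot+2\tau)-2g(\cdot+\tau)+g\|_{L^p(\R)}$, one has for $0<\sigma<1$ and $p,q\in[1,\infty)$
\[
\|g\|_{B^\sigma_{p,q}(\R)}\;\sim\;\|g\|_{L^p(\R)}+\Big(\int_0^1\big(t^{-\sigma}\omega_p(g,t)\big)^q\,\tfrac{dt}{t}\Big)^{1/q},
\]
and the analogue with $\omega^2_p$ at $\sigma=1$ (with the supremum replacing the integral when $q=\infty$). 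Extending functions on $\Omega_h^+$ (resp.\ $\R_+$) by zero to $\Omega_h$ (resp.\ $\R$), and recalling that Besov spaces on the half-line are the corresponding restriction spaces, it suffices to estimate $\mathcal{E}_h(w)$ on the full line. The whole argument then rests on two elementary facts about the step function $\mathcal{E}_h(w)$: for $0\le t\le h$,
\[
\|\mathcal{E}_h(w)(\cdot+t)-\mathcal{E}_h(w)\|_{L^p(\R)}^p\;\lesssim\;\frac{t}{h}\,\|\tau_h w-w\|_{L^p(\Omega_h^+)}^p ,
\]
because the two step functions disagree only on the intervals of length $t$ immediately to the left of the lattice points, where the gap is a nearest-neighbour increment of $w$ (plus a harmless contribution at the origin coming from the zero extension); and for $t\ge h$, writing $k=\lceil t/h\rceil\le 2t/h$,
\[
\|\mathcal{E}_h(w)(\cdot+t)-\mathcal{E}_h(w)\|_{L^p(\R)}\;\lesssim\;\|\tau_{kh}w-w\|_{L^p(\Omega_h^+)}+\|\tau_{(k+1)h}w-w\|_{L^p(\Omega_h^+)},
\]
obtained by partitioning $x$ according to which lattice interval contains $x+t$.

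For \emph{(i)} one combines these with $\|\tau_{mh}w-w\|_{L^p(\Omega_h^+)}\lesssim (mh)^{s}\|w\|_{W^{s,p}(\Omega_h^+)}$, which any reasonable discrete $W^{s,p}$-norm delivers (for $s=1$ this is just discrete telescoping). In the range $t\ge h$ this gives $\omega_p(\mathcal{E}_h w,t)\lesssim t^{s}\|w\|_{W^{s,p}}$; in the range $t\le h$ it gives $\omega_p(\mathcal{E}_h w,t)\lesssim t^{1/p}h^{\,s-1/p}\|w\|_{W^{s,p}}$, and since $0<t\le h\le 1$ and $0<s\le1$ one checks directly that $t^{1/p}h^{\,s-1/p}\le t^{\,s\wedge 1/p}$. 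Taking the supremum over $0<t<1$ and adding $\|\mathcal{E}_h w\|_{L^p(\R)}=\|w\|_{L^p(\Omega_h^+)}$ gives the bound in $B^{s\wedge 1/p}_{p,\infty}(\R)$ with an $h$-independent constant, and the smoothness is capped at $1/p$ precisely because a step function with $O(1)$ jumps has modulus $\sim t^{1/p}$. For \emph{(ii)}, $s=1$, one repeats the scheme with $\omega^2_p$ (legitimate since $1/p\le1$) and in addition controls the $q=p$ quantity $\int_0^1\big(t^{-1/p}\omega^2_p(\mathcal{E}_h w,t)\big)^p\,dt/t$: the large scales $t\ge h$ are handled exactly as above, and the subtle part is the band $t\lesssim h$, whose contribution must be shown to remain bounded uniformly in $0<h\le1$ — for this endpoint bookkeeping I would follow the sharp lattice estimates of \cite{devecchi_nicolay_2021elliptic}.

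Parts \emph{(iii)} and \emph{(iv)} rest on the principle ``pass to the limit before estimating a fixed modulus, so that the lattice defect at scale $h$ has disappeared''. For \emph{(iii)}, fix $0<t<1$; for every $h\le t$ the triangle inequality and translation invariance give $\omega_p(f,t)\le\omega_p(\mathcal{E}_h(f_h),t)+2\|f-\mathcal{E}_h(f_h)\|_{L^p(\R)}$, the second term tending to $0$ by hypothesis, while the first is $\lesssim t^{s}\sup_h\|f_h\|_{W^{s,p}(\Omega_h^+)}$ by the large-scale estimate (applicable since $h\le t$). Letting $h\to0$ yields $\omega_p(f,t)\lesssim t^{s}$ with constant independent of $t$, hence $f\in B^{s}_{p,\infty}(\R_+)$ (for $s=1$, use $\omega^2_p$). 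For \emph{(iv)}, when $1<p<\infty$ the sequence $\mathcal{E}_h(\dpiu_h f_h)$ is bounded in $L^p(\R_+)$ since $\|\mathcal{E}_h(\dpiu_h f_h)\|_{L^p(\R_+)}=\|\dpiu_h f_h\|_{L^p(\Omega_h^+)}\le\|f_h\|_{W^{1,p}(\Omega_h^+)}$, so by reflexivity a subsequence converges weakly to some $g\in L^p(\R_+)$. To identify $g$ with the weak derivative of $f$, test against $\phi\in C^\infty_c(\R_+)$ and use discrete summation by parts: $\int_{\R_+}\mathcal{E}_h(\dpiu_h f_h)\,\phi=-\int_{\R_+}\mathcal{E}_h(f_h)\,\mathcal{E}_h(\dmen_h\phi_h)$ up to a boundary term that vanishes once $\operatorname{supp}\phi$ is away from $0$, where $\phi_h(z)=\tfrac1h\int_z^{z+h}\phi$ and $\mathcal{E}_h(\dmen_h\phi_h)\to\phi'$ uniformly. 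Passing to the limit, $\int_{\R_+}g\,\phi=-\int_{\R_+}f\,\phi'$ for all such $\phi$, so $g=f'$ weakly and $f\in W^{1,p}(\R_+)$; uniqueness of the weak derivative then forces the whole sequence to converge weakly to $f'$.

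The step I expect to be the main obstacle is the $h$-uniformity in \emph{(i)}--\emph{(ii)}, and specifically the endpoint case \emph{(ii)}: the whole argument hinges on the small-scale/large-scale dichotomy for $\mathcal{E}_h(w)$, and the genuinely delicate point is to control the small-scale ($t\lesssim h$) contribution of the piecewise-constant function $\mathcal{E}_h(w)$ to the $B^{1/p}_{p,p}$ (respectively $B^{s\wedge 1/p}_{p,\infty}$) quantity uniformly in $0<h\le1$ — this is where the stepwise nature of $\mathcal{E}_h$ bites hardest and where the fine lattice analysis of \cite{devecchi_nicolay_2021elliptic} is indispensable. By comparison, \emph{(iii)} is a soft lower-semicontinuity argument (one is free to send $h\to0$ first), and \emph{(iv)} is a routine weak-compactness plus consistency-of-the-discrete-derivative argument.
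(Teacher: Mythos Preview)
The paper does not actually prove this theorem: its entire proof consists of the sentence ``The interested reader may refer to \cite[Theorem 2.23 and Theorem 2.25]{devecchi_nicolay_2021elliptic}.'' So there is no in-text argument to compare against, and you have in fact done considerably more than the paper by sketching a self-contained proof.

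Your approach via the $L^p$-modulus-of-smoothness characterisation, together with the small-scale/large-scale dichotomy for the step function $\mathcal{E}_h(w)$, is the natural route and is consistent with the techniques used in the cited reference. The key estimates you isolate (the $(t/h)^{1/p}$ gain for $t\le h$ coming from the fact that the translated step functions disagree only on sub-intervals of length $t$, and the reduction to lattice increments $\tau_{kh}w-w$ for $t\ge h$) are correct, as is the case analysis showing $t^{1/p}h^{\,s-1/p}\le t^{\,s\wedge 1/p}$ for $0<t\le h\le 1$. Your treatment of \emph{(iii)} by fixing $t$ and sending $h\to 0$ first, so that only the large-scale bound is needed, is exactly the right lower-semicontinuity trick, and the weak-compactness argument for \emph{(iv)} is standard and matches how the paper itself later identifies weak limits of $\mathcal{E}_h(D_h^\pm f_h)$ in Proposition~\ref{proposition:product}. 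Your honest flagging of the endpoint \emph{(ii)} as the delicate step, and the deferral there to \cite{devecchi_nicolay_2021elliptic}, is precisely what the paper does for the \emph{entire} statement.
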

\begin{proof}
    The interested reader may refer to \cite[Theorem 2.23 and Theorem 2.25]{devecchi_nicolay_2021elliptic}.
\end{proof}
 \begin{definition}
We denote by $\mathcal{D}_h$  a \emph{discretization operator} from the
space  $L^p(\Omega)$ into
$L^p(\Omega_h^+)$, with $p \in [1, + \infty]$, defined as
\begin{equation*}
  \mathcal{D}_h (\varphi) (z) :=  \frac{1}{h}
  \int_{z}^{z+h} \varphi (x) d x, \quad \varphi \in
  L^p(\Omega), \quad z \in \Omega_h^+.
\end{equation*}    
\end{definition}
\begin{lemma}\label{remark:DEh}
For $g\in L^p(\Omega)$ and $f\in L^q(\Omega_h^+)$, with $\frac{1}{p}+\frac{1}{q}=1$,  the following equality holds
\begin{equation}\label{eq:DEh} \int_{\Omega}g(x) \mathcal{E}_hf(x) dx= \int_{\Omega_h^+} \mathcal{D}_hg(z) f(z) dz.\end{equation}
\end{lemma}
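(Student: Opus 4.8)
The plan is to prove \eqref{eq:DEh} by a direct computation: unfold the definition of $\mathcal{E}_h$, interchange the (lattice) sum with the integral over $\Omega$, and recognise $\mathcal{D}_h g$ on the other side. Conceptually the statement is nothing but the assertion that $\mathcal{D}_h$ is the $L^p$--$L^q$ adjoint of $\mathcal{E}_h$, so no clever idea is needed; the only point requiring care is the Fubini--Tonelli justification of the exchange of $\sum$ and $\int$.

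First I would write $\mathcal{E}_h f(x)=\sum_{z\in\Omega_h^+}f(z)\mathbb{I}_{[z,z+h)}(x)$ and substitute it into the left-hand side of \eqref{eq:DEh}, obtaining $\int_\Omega g(x)\sum_{z\in\Omega_h^+}f(z)\mathbb{I}_{[z,z+h)}(x)\,dx$. To legitimate interchanging integral and sum I would check absolute summability by Tonelli: Hölder's inequality on each slab gives $\int_z^{z+h}|g|\le h^{1/q}\|g\|_{L^p([z,z+h))}$, and then discrete Hölder over $z$ together with the convention $\int_{\Omega_h^+}F\,dz=h\sum_z F(z)$ yields $\sum_{z\in\Omega_h^+}|f(z)|\int_z^{z+h}|g|\le \|f\|_{L^q(\Omega_h^+)}\,\|g\|_{L^p(\Omega)}<\infty$ (the endpoint cases $p=1$ and $p=\infty$ being handled by the obvious modification of this estimate). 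Once the exchange is justified, each term reads $f(z)\int_z^{z+h}g(x)\,dx = h\,f(z)\,\mathcal{D}_h g(z)$ by the very definition of $\mathcal{D}_h$, and summing over $z\in\Omega_h^+$ and using again $h\sum_z=\int_{\Omega_h^+}\cdot\,dz$ reproduces precisely $\int_{\Omega_h^+}\mathcal{D}_h g(z)\,f(z)\,dz$, the right-hand side of \eqref{eq:DEh}.

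I do not expect a genuine obstacle: the only non-routine ingredient is the Hölder/Tonelli argument above, and it is immediate. A minor bookkeeping remark is that $\mathcal{E}_h f$ is supported on $\bigcup_{z\in\Omega_h^+}[z,z+h)$ (so the left integral effectively runs only over that set), and since $\mathcal{D}_h g$ on the right is defined through exactly the same slabs, the two sides match term by term with no discrepancy coming from the cell $[0,h)$.
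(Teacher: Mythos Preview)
Your proof is correct and follows exactly the approach indicated in the paper, which simply states that the identity follows from a direct calculation. Your argument supplies the details of that calculation, including the Fubini--Tonelli justification, and there is nothing to add.
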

\begin{proof}    The statement can be proved by a direct calculation.
\end{proof}
In the following, we prove some technical results that will be useful in establishing our main convergence theorem. 

\begin{definition}\label{remark:cinfty}
Let $g\in C^{\infty}(\Omega)$ be a smooth function. We denote by $g_h$ the function defined by applying the discretization operator $\mathcal{D}_h$ to $g$, namely
\begin{equation*}
g_h(z)=\mathcal{D}_h(g)(z),\quad z \in \Omega_h^+. 
\end{equation*}
\end{definition}
\begin{lemma}\label{lemma:cinfty}
Let $g\in C^{\infty}_0(\Omega)$ be a smooth function with compact support and let $g_h$ be the descretized function  as in Definition \ref{remark:cinfty}. Then, for any $p \in [1, +\infty]$, $s < \frac{1}{p}$, we have 
\[
\lim_{h \rightarrow 0} \|\mathcal{E}_h(g_h) - g \|_{B^s_{p,p}(\Omega)} =0,
\]
where $\mathcal{E}_h$ is the extension operator introduced in Definition \ref{piece-contant-extension-operator}.\\
Furthermore, we get 
\[ \lim_{h \rightarrow 0} \|\mathcal{E}_h(D^+_h g_h) - \partial_xg \|_{B^s_{p,p}(\Omega)} = \lim_{h \rightarrow 0} \|\mathcal{E}_h(D^-_hg_h) - \partial_xg \|_{B^s_{p,p}(\Omega)}= 0\]
and
\[\lim_{h \rightarrow 0} \|\mathcal{E}_h(\Delta^D_h g_h) - \partial_x^2g \|_{B^s_{p,p}(\Omega)}=0\]
\end{lemma}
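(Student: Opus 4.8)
The plan is to deduce all four limits from one elementary observation: \emph{if $(\phi_h)_{0<h\le 1}$ is a family in $C^\infty(\Omega)$ whose supports all lie in a single fixed compact subset of $(0,\infty)$, which is bounded in $C^1$ uniformly in $h$, and which converges uniformly to some $\phi$ as $h\to0$, then $\mathcal{E}_h(\mathcal{D}_h(\phi_h))\to\phi$ in $B^s_{p,p}(\Omega)$ for every $p\in[1,\infty]$ and every $s<1/p$.} Granting this, I would get the four statements by first checking the identities
\begin{align*}
D^+_h\mathcal{D}_h(g) &= \mathcal{D}_h\!\Big(\tfrac{g(\cdot+h)-g}{h}\Big), \qquad D^-_h\mathcal{D}_h(g) = \mathcal{D}_h\!\Big(\tfrac{g-g(\cdot-h)}{h}\Big),\\
\Delta^D_h\mathcal{D}_h(g) &= \mathcal{D}_h\!\Big(\tfrac{g(\cdot+h)-2g+g(\cdot-h)}{h^2}\Big),
\end{align*}
which follow from Fubini, and then observing that the forward/backward difference quotients and the second symmetric difference quotient of the fixed $g\in C^\infty_0(\Omega)$ are again smooth, are supported in a fixed compact subset of $(0,\infty)$ for $h$ small, are bounded in $C^1$ by $\|g\|_{C^2}$ (resp.\ $\|g\|_{C^3}$), and tend uniformly to $\partial_x g$, $\partial_x g$, $\partial_x^2 g$ by Taylor's theorem. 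For the first limit one simply takes $\phi_h\equiv g$.

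To prove the general observation I would first establish $L^p$ convergence: for $x\in[z,z+h)$ the number $\mathcal{E}_h(\mathcal{D}_h(\phi_h))(x)$ is an average of $\phi_h$ over $[z,z+h]$, so $\|\mathcal{E}_h(\mathcal{D}_h(\phi_h))-\phi\|_{L^\infty(\Omega)}\le h\|\phi_h'\|_{L^\infty(\Omega)}+\|\phi_h-\phi\|_{L^\infty(\Omega)}\to0$, and since everything is supported in one fixed compact set this upgrades to $\mathcal{E}_h(\mathcal{D}_h(\phi_h))\to\phi$ in $L^p(\Omega)$ for all $p$. Next I would extract a uniform higher-regularity bound: the uniform $C^1$ bound and the compact support give $\sup_{0<h\le1}\|\mathcal{D}_h(\phi_h)\|_{W^{1,p}(\Omega_h^+)}<\infty$ (because $|\mathcal{D}_h(\phi_h)(z)|\le\|\phi_h\|_{L^\infty}$, $|D^+_h\mathcal{D}_h(\phi_h)(z)|=|\mathcal{D}_h(D^+_h\phi_h)(z)|\le\|\phi_h'\|_{L^\infty}$, and only $O(1)$ lattice points meet the support), so item~(ii) of Theorem~\ref{theorem:extension} yields, for $p\in[1,\infty)$, $\sup_{0<h\le1}\|\mathcal{E}_h(\mathcal{D}_h(\phi_h))\|_{B^{1/p}_{p,p}(\Omega)}<\infty$.

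Finally I would interpolate. For $1\le p<\infty$ and $0<s<1/p$, set $\theta=sp\in(0,1)$; real interpolation gives $\big(B^0_{p,\infty}(\Omega),B^{1/p}_{p,p}(\Omega)\big)_{\theta,p}=B^s_{p,p}(\Omega)$, so, using $L^p(\Omega)\hookrightarrow B^0_{p,\infty}(\Omega)$ and the fact that $\phi$ (being $g$, $\partial_x g$ or $\partial_x^2 g$, hence smooth with compact support in $(0,\infty)$) lies in $B^{1/p}_{p,p}(\Omega)$,
\[
\|\mathcal{E}_h(\mathcal{D}_h(\phi_h))-\phi\|_{B^s_{p,p}(\Omega)}\le C\,\|\mathcal{E}_h(\mathcal{D}_h(\phi_h))-\phi\|_{L^p(\Omega)}^{1-\theta}\,\|\mathcal{E}_h(\mathcal{D}_h(\phi_h))-\phi\|_{B^{1/p}_{p,p}(\Omega)}^{\theta},
\]
whose first factor tends to $0$ and whose second factor is bounded uniformly in $h$ by the previous step. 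The remaining cases, $s\le0$ and $p=\infty$ (where $s<0$), follow at once from $L^p$ convergence and the embedding $B^0_{p,\infty}(\Omega)\hookrightarrow B^s_{p,q}(\Omega)$ for $s<0$.

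I expect the main obstacle to be the middle step, i.e.\ bounding $\|\mathcal{E}_h(\mathcal{D}_h(\phi_h))\|_{B^{1/p}_{p,p}(\Omega)}$ by a constant \emph{independent of $h$}: this is precisely where the uniform operator-norm statement of Theorem~\ref{theorem:extension} (rather than mere continuity for each fixed $h$, which degenerates as $h\to0$) is indispensable. Once that bound is in hand, the interpolation argument and the reduction of the derivative cases to the base case are routine.
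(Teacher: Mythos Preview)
Your argument is correct and follows essentially the same skeleton as the paper's proof: both establish (a) some form of convergence in a weak norm and (b) a uniform-in-$h$ bound on $\mathcal{E}_h(\mathcal{D}_h(\phi_h))$ in $B^{1/p}_{p,p}(\Omega)$ via Theorem~\ref{theorem:extension}, and then upgrade to $B^s_{p,p}$ convergence for $s<1/p$. The only genuine difference is in the upgrade mechanism: the paper proves weak (distributional) convergence and then invokes the compact embedding $B^{1/p}_{p,p}\hookrightarrow B^s_{p,p}$ together with a subsequence argument, whereas you prove strong $L^p$ convergence directly and apply a real-interpolation inequality between $B^0_{p,\infty}$ and $B^{1/p}_{p,p}$. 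Your route is slightly more quantitative (it implicitly delivers a rate), while the paper's compactness argument is shorter and does not require checking separately that the limit $\phi$ itself lies in $B^{1/p}_{p,p}$. One small slip of wording: you write ``only $O(1)$ lattice points meet the support,'' but in fact $O(1/h)$ lattice points do; the $O(1)$ bound on the discrete $W^{1,p}$ norm then comes from the compensating factor $h$ in the definition of $\|\cdot\|_{L^p(\Omega_h^+)}$, which you are implicitly using.
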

\begin{proof}
 Since $g \in C^{\infty}_0(\Omega)$, i.e. is a compactly supported smooth function, we obtain $g_h(z)=\mathcal{D}_h(g)(z)=g(z)+o(z)$, $\dpiu_hg_h(z)=\frac{1}{h}\int_0^h{\dpiu_hg(z+s)ds}=\mathcal{D}_h(\partial_x(g))(z)+o(h)=\partial_x(g)(z)+o(h)$, and similarly for the other terms. \\
From these relations, it is very simple to prove that $\mathcal{E}_h(g_h)$, $\mathcal{E}_h(\dmen_hg_h)$, $\mathcal{E}_h(\dpiu_hg_h)$ and $\mathcal{E}_h(\Delta^D g_h)$ converge weakly, as distributions in $\mathcal{S}'(\Omega)$, to $g$, $\partial_x(g)$, $\Delta_x g$, respectively.\\
Furthermore, by Theorem \ref{theorem:extension}, we get
\[\sup_{0<h\leq 1}  \left\{\|\mathcal{E}_h(g_h)\|_{B^{1/p}_{p,p}},\|\mathcal{E}_h(\dpiu_hg_h)\|_{B^{1/p}_{p,p}}, \|\mathcal{E}_h(\dmen_hg_h)\|_{B^{1/p}_{p,p}}, \|\mathcal{E}_h(\Delta_h g_h)\|_{B^{1/p}_{p,p}}\right\} < +\infty, \]
and thus, for any $s<\frac{1}{p}$, using the compactness of the immersion of $B^{1/p}_{p,p}(\Omega)$ into $B^s_{p,p}(\Omega)$, the result follows.
\end{proof}

The previous lemma allows us to establish the following technical result concerning the convergence of the products of converging sequences.

\begin{proposition}\label{proposition:product}
Consider an $f \in H^1(\Omega)$ such that $f_h \in H^1(\Omega_h^+)$ and, as $h \downarrow 0$, 
$\mathcal{E}_h(f_h) \rightarrow f,$ 
where the convergence is in $L^2(\Omega)$.  Then $\mathcal{E}_h(\dpiu_h f_h)$ and $\mathcal{E}_h(\dmen_h f_h)$ converge  to $\partial_x f$ weakly in $L^2(\Omega)$, where $\partial_x f $ is understood  to be the weak derivative of the function $f\in H^1(\Omega)$.\\
Furthermore, for any other $k_h \in H^1(\Omega_h^+)$, such that $\mathcal{E}_h(k_h)$ converges to $k \in H^1(\Omega)$, then, for any $p\in [1,+\infty)$, $\mathcal{E}_h(k_h f_h)$ converges  to $k f$, strongly in $L^p(\Omega)$.
\end{proposition}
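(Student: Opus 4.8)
The plan is to prove the two assertions in turn, in both cases exploiting that the \emph{piecewise-constant} extension is multiplicative: since the intervals $[z,z+h)$, $z\in\Omega_h^+$, are pairwise disjoint, one has $\mathcal{E}_h(k_hf_h)=\mathcal{E}_h(k_h)\,\mathcal{E}_h(f_h)$ pointwise on $\Omega$, and, for lattice functions $a,b$, $\int_\Omega\mathcal{E}_h(a)\,\mathcal{E}_h(b)\,dx=\int_{\Omega_h^+}ab\,dz$. Throughout I shall also use the uniform bounds $\sup_{0<h\le 1}\|f_h\|_{H^1(\Omega_h^+)}<\infty$ and $\sup_{0<h\le 1}\|k_h\|_{H^1(\Omega_h^+)}<\infty$, available from the a priori estimates of Section~\ref{section:apriori} in the situations where the statement is invoked; the first of these is already needed to make sense of the convergence of the difference quotients. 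The hypothesised convergences $\mathcal{E}_h(f_h)\to f$ and $\mathcal{E}_h(k_h)\to k$ are understood in $L^2(\Omega)$.

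\emph{Weak convergence of $\mathcal{E}_h(\dpiu_h f_h)$ and $\mathcal{E}_h(\dmen_h f_h)$.} Since $\|\mathcal{E}_h(\dpiu_h f_h)\|_{L^2(\Omega)}=\|\dpiu_h f_h\|_{L^2(\Omega_h^+)}\le C\|f_h\|_{H^1(\Omega_h^+)}$, and similarly for $\dmen_h$, these families are bounded in $L^2(\Omega)$, so it suffices to identify their limit by testing against $g\in C^\infty_0(\Omega)$ with $0\notin\operatorname{supp}g$, a dense family in $L^2(\Omega)$. For such $g$, putting $g_h=\mathcal{D}_h g$ (Definition~\ref{remark:cinfty}), Lemma~\ref{remark:DEh} gives $\int_\Omega\mathcal{E}_h(\dpiu_h f_h)\,g\,dx=\int_{\Omega_h^+}(\dpiu_h f_h)\,g_h\,dz$, and a discrete integration by parts (Lemma~\ref{lem:discrete_int_parts}), in which the boundary contribution at $0$ vanishes because $g_h\equiv 0$ near $0$ for $h$ small, turns this into $-\int_{\Omega_h^+}f_h\,(\dmen_h g_h)\,dz=-\int_\Omega\mathcal{E}_h(f_h)\,\mathcal{E}_h(\dmen_h g_h)\,dx$. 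By Lemma~\ref{lemma:cinfty} (with $p=2$, $s=0$), $\mathcal{E}_h(\dmen_h g_h)\to\partial_x g$ in $L^2(\Omega)$, while $\mathcal{E}_h(f_h)\to f$ in $L^2(\Omega)$ by hypothesis, so the last integral converges to $-\int_\Omega f\,\partial_x g\,dx=\int_\Omega g\,\partial_x f\,dx$, the last equality being the continuum integration by parts, valid since $f\in H^1(\Omega)$ and $g$ is compactly supported in $(0,\infty)$. Hence $\int_\Omega\mathcal{E}_h(\dpiu_h f_h)\,g\,dx\to\langle\partial_x f,g\rangle$ on a dense subset of $L^2(\Omega)$, which together with the uniform $L^2$-bound gives $\mathcal{E}_h(\dpiu_h f_h)\rightharpoonup\partial_x f$ weakly in $L^2(\Omega)$; the argument for $\dmen_h f_h$ is the same, using the companion chain rule of Lemma~\ref{lem:discrete_int_parts} (again the extra lattice term at $0$ vanishes for $h$ small).

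\emph{Strong $L^p$ convergence of the products.} By multiplicativity $\mathcal{E}_h(k_hf_h)=\mathcal{E}_h(k_h)\,\mathcal{E}_h(f_h)$, so it suffices to pass to the limit in this product. I first upgrade the two $L^2$-convergences to $L^q$-convergences for every finite $q$: by Theorem~\ref{theorem:extension}(ii) the families $\{\mathcal{E}_h(f_h)\}$ and $\{\mathcal{E}_h(k_h)\}$ are bounded in $B^{1/2}_{2,2}(\Omega)=H^{1/2}(\Omega)$, hence, by the one-dimensional embedding $H^{1/2}(\Omega)\hookrightarrow L^q(\Omega)$ for $2\le q<\infty$, bounded in every such $L^q(\Omega)$ uniformly in $h$; interpolating the $L^2$-convergence with this higher-integrability bound yields $\mathcal{E}_h(f_h)\to f$ and $\mathcal{E}_h(k_h)\to k$ in $L^q(\Omega)$ for all $q\in[2,\infty)$. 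Then, for $p\in[1,\infty)$, writing
\[
\mathcal{E}_h(k_h)\,\mathcal{E}_h(f_h)-kf=\bigl(\mathcal{E}_h(k_h)-k\bigr)\mathcal{E}_h(f_h)+k\bigl(\mathcal{E}_h(f_h)-f\bigr),
\]
H\"older with exponents $(2p,2p)$ bounds the first term by $\|\mathcal{E}_h(k_h)-k\|_{L^{2p}(\Omega)}\,\|\mathcal{E}_h(f_h)\|_{L^{2p}(\Omega)}\to 0$, and, since $k\in H^1(\Omega)\subset\bigcap_{2\le r\le\infty}L^r(\Omega)$, a further application of H\"older shows the second term tends to $0$ in $L^p(\Omega)$. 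Therefore $\mathcal{E}_h(k_hf_h)\to kf$ strongly in $L^p(\Omega)$.

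The main obstacle is the first step, i.e.\ correctly identifying the weak $L^2$-limit of the discrete derivatives as $\partial_x f$: this requires combining the discrete integration-by-parts identities of Lemma~\ref{lem:discrete_int_parts}, the $\mathcal{D}_h$--$\mathcal{E}_h$ duality of Lemma~\ref{remark:DEh}, and the convergence $\mathcal{E}_h(\dmen_h g_h)\to\partial_x g$ of Lemma~\ref{lemma:cinfty}, while making sure that all boundary contributions at $x=0$ genuinely disappear (whence the restriction to test functions supported away from $0$). Once the limit is identified the rest is routine, the only extra ingredient being the gain of integrability from the $B^{1/2}_{2,2}$-bound of Theorem~\ref{theorem:extension}(ii), which is what upgrades the product convergence from $L^1$ to every finite $L^p$.
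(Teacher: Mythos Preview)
Your proof is correct and follows essentially the same route as the paper: for the weak convergence of the discrete derivatives you use the $\mathcal{D}_h$--$\mathcal{E}_h$ duality (Lemma~\ref{remark:DEh}), a discrete integration by parts, and Lemma~\ref{lemma:cinfty} exactly as the paper does, with the added care of restricting to test functions supported away from $0$ to dispose of the boundary term. For the product convergence you use the same multiplicativity $\mathcal{E}_h(k_hf_h)=\mathcal{E}_h(k_h)\mathcal{E}_h(f_h)$ and the $B^{1/2}_{2,2}$-bound from Theorem~\ref{theorem:extension}(ii); where the paper concludes by a one-line appeal to compact Besov embeddings, you instead spell out the upgrade to $L^q$ via the critical Sobolev embedding $H^{1/2}(\Omega)\hookrightarrow L^q(\Omega)$ and interpolation, which is a more explicit but equivalent way of reaching the same conclusion.
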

\begin{proof}
Let $g \in C^{\infty}_0(\Omega)$ be a smooth function with compact support. Denoting by $g_h=\mathcal{D}_h(g)$ its discretization according with Definition \ref{remark:cinfty}, by Lemma \ref{remark:DEh} and the discrete integration by parts formula, we note that 
\[\int_{\Omega}g(x)\mathcal{E}_h(D_h^+f)(x)dx= \int_{\Omega_h^+} g_h(z) D_h^+f_h(z) d z=- \int_{\Omega_h^+} D_h^-g_h(z) f_h(z) d z.\]
Using the fact that $D_h^-g_h$ converges to $\partial_x g$ in $L^2(\Omega^+_h)$, and $f_h$ converges weakly to $f$ in $L^2(\Omega^+_h)$ the convergence of $\mathcal{E}_h(D_h^+ f_h)$ to $\partial_x f$ in $\mathcal{S}'(\Omega)$ is provided. 
Since $\|f_h\|_{H^1(\Omega_h^+)}$ is uniformly bounded, we obtain that $\|\mathcal{E}_h(D_h^+f)\|_{L^2(\Omega)}$ is also uniformly bounded and thus $\mathcal{E}_h(D_h^+f_h)$ converges to $\partial_xf$, weakly in $L^2(\Omega)$.
\smallskip

The second part of the proposition follows from the fact that $\mathcal{E}_h(k_h f_h)=\mathcal{E}_h(k_h)\mathcal{E}_h(f_h)$, the previous results on the boundedness of $\mathcal{E}_h$ and on the properties of compact embeddings of Besov spaces.
\end{proof}

\subsection{Convergence of the space-discrete to the continuum system }
Let now refer to the continuum model \eqref{eq:deterministic_intro} with   the initial and boundary conditions given in \eqref{eq:stochastic_boundary_condition} and \eqref{eq:discretesystem_Initial_boundary}. Recall that by this is equivalent to  \eqref{eq:s2} and \eqref{eq:c2}.
   
\begin{definition}\label{def:continuous_weak_solution}
    We say that the couple $( s, c)$ is a weak solution for the system \eqref{eq:deterministic_intro} if for any $(\phi_1,\phi_2) \in C_0^\infty([0,T] \times \Omega) \times C_0^\infty([0,T] \times \Omega)$
    
    \begin{eqnarray}
        \int_0^T\int_\Omega  \varphi s\partial_t \phi_1 dt dx &=& \int_0^T\int_\Omega \varphi(c) \nabla s \nabla \phi_1 dt dx +\int_0^T\int_\Omega \lambda\varphi(c) s c\, \phi_1 dt dx;\label{eq:weak1} \\
        \int_0^T\int_\Omega  c\, \partial_t\phi_2 dt dx& =& \int_0^T\int_\Omega \lambda\varphi(c) s c \,\phi_2 dtdx. \label{eq:weak2}
    \end{eqnarray}
    
\end{definition}

We report that the author in \cite{1977_ball} has proved that, under the hypothesis that the operator $\partial_{xx}$ in \eqref{eq:s2}  generates a strongly continuous semigroup of bounded linear continuous operators and the term $b_c\partial_x s+\tilde{\gamma_c}s$ is $L^1$ bounded,  then the weak solution is a mild solution  to \eqref{eq:s2}. We can directly apply the cited result, but we still need to ensure that the \textit{weak formulation} as given in Definition \ref{def:continuous_weak_solution} is equivalent to the weak formulation for \eqref{eq:s2}. In doing this, we should consider an opportune test function $\phi_1$.

\begin{lemma}\label{weak_versus_mild}
The   weak solution $(s,c)$   \eqref{eq:weak1}-\eqref{eq:weak2}, with \eqref{eq:stochastic_boundary_condition} and \eqref{eq:discretesystem_Initial_boundary}, is a mild solution for the system \eqref{eq:deterministic_intro}, or equivalently of  \eqref{eq:s2}-\eqref{eq:c2},  in the sense of Definition \ref{def:mild_solution_s_nonlin}.
\end{lemma}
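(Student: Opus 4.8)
The plan is to show that a weak solution $(s,c)$ in the sense of Definition \ref{def:continuous_weak_solution} is a mild solution in the sense of Definition \ref{def:mild_solution_s_nonlin}, by invoking the abstract result of \cite{1977_ball}. The argument has three parts: first, recast the weak formulation \eqref{eq:weak1} as a weak formulation for the reaction-advection equation \eqref{eq:s2}; second, verify the hypotheses of \cite{1977_ball}; third, transfer the result from the whole-line setting to the half-line with the Dirichlet boundary datum $\psi$ and account for the ODE part \eqref{eq:weak2}.

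First I would start from \eqref{eq:weak1} and expand $\partial_t(\varphi s)$ and $\nabla\cdot(\varphi\nabla s)$ using $\varphi=\varphi(c)=A+Bc$ and the ODE $\partial_t c=-\lambda\varphi s c$. Since $\partial_t\varphi(c)=B\,\partial_t c=-\lambda B\varphi s c$, we obtain $\varphi\,\partial_t s=\partial_t(\varphi s)+\lambda B\varphi s^2 c$; dividing formally by $\varphi$ (which is legitimate by \eqref{eq:limit_porosity}, $\varphi_{min}>0$) and using $\nabla\cdot(\varphi\nabla s)=\varphi\Delta s+\nabla\varphi\cdot\nabla s$, one recovers exactly the equation $\partial_t s=\partial_{xx}s+\tilde b_c\partial_x s+\gamma_c s(Bs-1)$ from \eqref{eq:s2}, with $\tilde b_c,\gamma_c$ as in \eqref{eq:b_c_gamma_c}. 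At the level of test functions: given a test function $\phi$ for \eqref{eq:s2}, set $\phi_1=\phi/\varphi(c)$ — this is an admissible test function provided $c$ is regular enough (it is, since $c\in B_b([0,T]\times\Omega)$ and, by the mild formula \eqref{eq:df_mild_solution_c}, $c$ inherits the $W^{1,2}_x$-regularity of $s$ through the integral $\int_0^t As\,d\tau$), and the algebra above shows \eqref{eq:weak1} with $\phi_1$ is equivalent to the weak form of \eqref{eq:s2} tested against $\phi$. Care must be taken with the boundary term at $x=0$: since $s(t,0)=\psi_t$ is prescribed and the test functions are compactly supported in $\Omega=\R_+$ (vanishing near $0$), the integration-by-parts boundary contribution is absorbed exactly into the $-2\int_0^t\partial_x G(t-\tau,\cdot)\psi(\tau)d\tau$ term of the mild formula via the odd-reflection Duhamel representation $*_D$.

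Next I would check the two hypotheses of \cite{1977_ball}: (a) $\partial_{xx}$ with Dirichlet boundary condition generates a strongly continuous semigroup of bounded operators on the relevant space — this is the classical heat semigroup, here realized via the kernel $G$ and the Duhamel convolution $*_D$; (b) the nonlinear forcing term $\tilde b_c\,\partial_x s+\gamma_c s(Bs-1)$ lies in $L^1([0,T];X)$ for the appropriate Banach space $X$. Boundedness of this term follows from Theorem \ref{teo:wellposedness_continuuum_system}: $s\in L^\infty([0,T],W^{1,2}(\R_+))\cap L^\infty([0,T]\times\R_+)$, $c\in B_b$, $\varphi\ge\varphi_{min}>0$ so $\tilde b_c=\tfrac{B}{\varphi}\partial_x c$ is controlled once $\partial_x c\in L^\infty_tL^2_x$ (which again comes from the explicit formula for $c$ and the regularity of $s$), and $\gamma_c=\lambda c$ is bounded; the quadratic term $\gamma_c B s^2$ is in $L^\infty_tL^1_x$ since $s\in L^\infty_tL^2_x$. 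With (a) and (b) in hand, \cite{1977_ball} yields that the weak solution of \eqref{eq:s2} is the mild solution, i.e. satisfies item 3 of Definition \ref{def:mild_solution_s_nonlin}. Item 1 (the explicit ODE solution formula) follows by integrating \eqref{eq:weak2} in $x$ against a localizing test function and solving the pointwise Bernoulli-type ODE $\partial_t c=-\lambda(A+Bc)cs$, whose solution is exactly \eqref{eq:df_mild_solution_c}; item 2 (positivity and the bound $s\in(0,\eta)$) is already part of the hypotheses carried over from Theorem \ref{teo:wellposedness_continuuum_system}.

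The main obstacle I anticipate is the rigorous justification of the test-function change $\phi_1\leftrightarrow\phi/\varphi(c)$ and the associated integration by parts: one must ensure $1/\varphi(c)$ has enough time-space regularity (in particular a weak time derivative, since $\partial_t\phi_1$ appears) to be paired with $s$, and that no hidden boundary term at $x=0$ is lost — this is where the low regularity of $\psi$ (only $C^\beta$ in time, $\beta\in(1/4,1/2)$) makes the direct pointwise manipulation delicate and forces one to work with the Duhamel/odd-reflection representation rather than naive integration by parts. A clean way around it is to first mollify $\psi$ in time, run the whole equivalence for the regularized problem where all manipulations are classical, and then pass to the limit using the a priori bounds (uniform in the mollification parameter) that are exactly of the type established in Proposition \ref{proposition:estimate_u} and Proposition \ref{estimate_for_s}, together with the continuity of the mild-solution map in $\psi$.
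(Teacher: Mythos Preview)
Your overall strategy matches the paper's: translate the weak formulation \eqref{eq:weak1} into the weak formulation of \eqref{eq:s2} via the test-function substitution $\phi_1\rightsquigarrow g_0/\varphi(c)$, then invoke \cite{1977_ball}. The difference lies in how the regularity obstacle you correctly flag is handled. The paper does \emph{not} mollify the boundary datum $\psi$ and re-solve; instead it mollifies $1/\varphi$ directly inside the test function, taking $\phi_1(t,x)=g_0(t,x)\,\bigl(a_\varepsilon^{t,x}\ast\tfrac{1}{\varphi}\bigr)$ with $g_0\in C^\infty_0$ and $a_\varepsilon$ a space-time mollifier. This $\phi_1$ is genuinely admissible (smooth, compactly supported) for every $\varepsilon>0$, so all the integrations by parts are classical; one then computes the $\varepsilon\to 0$ limits term by term (using $\partial_t(1/\varphi)=B\lambda sc/\varphi$ and $\partial_x(1/\varphi)=-b_c/\varphi$), and the result is exactly the weak form of \eqref{eq:s2} against $g_0$.

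Your alternative---mollify $\psi$, run the equivalence for the regularized problem, pass to the limit via the a priori bounds---would also work, but it is heavier: it requires re-establishing well-posedness and the weak/mild equivalence for each regularized problem, plus stability of both formulations under the limit. The paper's route keeps the solution $(s,c)$ fixed and only perturbs the test function, so the passage to the limit is a single dominated-convergence argument rather than a stability-of-solutions argument. Also, your remark about the boundary term at $x=0$ being ``absorbed into the $-2\int_0^t\partial_xG\,\psi\,d\tau$ term'' is not needed at this stage: since test functions vanish near $x=0$, no boundary term appears in the weak-to-weak translation; the boundary datum enters only through the abstract result of \cite{1977_ball} once the semigroup is set up with Dirichlet conditions.
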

\begin{proof}
Let $(s,c)$ be the mild solution to the system \eqref{eq:s2}-\eqref{eq:c2}  with initial and boundary condition \eqref{eq:stochastic_boundary_condition} and \eqref{eq:discretesystem_Initial_boundary}. By the result in \cite{1977_ball}, the mild solution to equations \eqref{eq:s2} and \eqref{eq:c2} is the same as the \emph{weak solution} to the equations \eqref{eq:s2} and \eqref{eq:c2}, i.e. $(s,c)$ is a mild solution to equations \eqref{eq:s2} and \eqref{eq:c2} if and only if for any $g_0 \in C^{\infty}_0([0,T]\times \Omega)$, namely $g_0$ is a smooth function with compact support, we have 
\begin{eqnarray} \label{weak-s}
   -\int_0^T\int_{\Omega} \partial_tg_0(t,x)    s(t,x) dt dx   &=&\int_0^T\int_{\Omega}	\partial_x^2g_0(t,x) s(t,x) dtdx\label{eq:weak22}
   \\
   &&+\int_0^T\int_{\Omega} g_0(t,x) (b_c(t,x) \partial _x s+ \gamma_c(t,x) s(Bs- 1)) dt dx  \nonumber
\end{eqnarray}	
where $b_c$ and $\gamma_c$ are as in  \eqref{eq:b_c_gamma_c} and $c$ is given by the explicit formula \eqref{eq:df_mild_solution_c}. 
So, what remains to prove is that the weak formulation in Definition \ref{def:continuous_weak_solution} implies the weak formulation given by equation \eqref{eq:weak22}.
In order to prove this statement, we consider $g_0 \in C^{\infty}_0([0,T]\times \Omega)$ and a smooth mollifier $a_\varepsilon\in C^{\infty}_0(\R^2)$ with compact support, and we choose the following test function $$\phi_1(t,x) = g_0(t,x)\left(a_\varepsilon^{t,x} \ast \frac{1}{\varphi}\right).$$  By integrating by parts the first integral in \eqref{eq:weak1} we get:
\begin{align*}
    \int_0^T \int_{\Omega} \partial_t (\varphi(c)s) &  g_0(t,x)\left(a_\varepsilon^{t,x} \ast \frac{1}{\varphi}\right) = -\int_0^T \int_{\Omega} \varphi(c)s \partial_t \left( g_0(t,x) \left(a_\varepsilon^{t,x} \ast \frac{1}{\varphi}\right) \right) =\\
     & = - \int_0^T \int_\Omega \varphi(c)\left(a_\varepsilon^{t,x} \ast \frac1\varphi\right) s \partial_t g_0  - \int_0^T \int_\Omega \varphi(c)s g_0 \left(a_\varepsilon^{t,x} \ast   \frac{B \lambda s c }{\varphi}\right)
\end{align*}
Taking the limit for $\varepsilon\to 0$ we have
\begin{equation}\label{eq:limite_weak_t}
    \int_0^T\int_\Omega \partial_t(\varphi(c)s)g_0(t,x)\left(a_\varepsilon^{t,x}\ast \frac1\varphi\right) \longrightarrow -\int_0^T\int_\Omega s\partial_t g_0 -\int_0^T\int_\Omega B\lambda s^2 c g_0.
\end{equation}
 By applying the product rule to the second term in \eqref{eq:weak1}, we obtain:
\begin{equation*}
\begin{aligned}
    \int_0^T\int_\Omega \varphi(c)\partial_{x}s \partial_x & \left(g_0\left(a_\varepsilon\ast \frac1\varphi\right)\right)  = \\
     & =\int_0^T\int_\Omega \varphi(c)\left(a_\varepsilon\ast \frac1\varphi\right) \partial_x s \partial_x g_0 -\int_0^T\int_\Omega \varphi(c)  g_0 \left(a_\varepsilon\ast \frac{b_c(t,x)}{\varphi}\right)\partial_x s,
\end{aligned}
\end{equation*}
which gives, as $\varepsilon\to 0$:
\begin{equation}\label{eq:limite_weak_x}
    \int_0^T\int_\Omega\varphi(c)\partial_x s \partial_x\left(g_0\left(a_\varepsilon^{t,x}\ast\frac1\varphi\right)\right) \longrightarrow \int_0^T\int_\Omega \partial_x s \partial_x g_0 -\int_0^T\int_\Omega \partial_x s b_c g_0.
\end{equation}
Inserting the limits \eqref{eq:limite_weak_t} and \eqref{eq:limite_weak_x} in \eqref{eq:weak1} we get
\begin{equation*}
  - \int_0^T \int_\Omega s \partial_t g_0 -\int_0^T \int_\Omega  B\lambda s^2 c g_0 + \int_0^T \int_\Omega \partial_x s \partial_x g_0 - \int_0^T \int_\Omega \partial_x s b_c g_0 = -\int_0^T \int_\Omega \lambda s c g_0
\end{equation*}
 Integrating by parts the first term on the left-hand side of the previous equality we have:
\begin{equation*}
\int_0^T \int_\Omega (\partial_t s) g_0 = \int_0^T \int_\Omega (\partial_{xx}s)g_0 + \int_0^T \int_\Omega \left(\partial_x s b_c + B\lambda s^2 c - \lambda s c\right) g_0,
\end{equation*}
which is exactly the weak formulation for equation \eqref{weak-s} against a test function $g_0 \in C_0^\infty(\Omega)$.
\end{proof}
The following is our main convergence result. 
 
\begin{theorem} [Convergence theorem]
    \label{def:weak_formulation}
    Let $(s_h,c_h)$ be the unique solution to the space-discrete system defined in $[0,T]\times \Omega_h^+$ as
    \begin{equation}\label{eq:semi_discrete_sys}
        \begin{split}
            \partial_t (\varphi(c_h) s_h) & = \frac12 \left( \dpiu_h(\varphi(c_h) \dmen_h s_h)+\dmen_h (\varphi(c_h) \dpiu_h s_h)\right) -\lambda \varphi(c_h)s_hc_h, \\
            \partial_t c_h & = - \lambda \varphi(c_h)  s_h c_h,
        \end{split}
    \end{equation}
    with initial and boundary conditions given by \eqref{eq:stochastic_boundary_condition} and \eqref{eq:discretesystem_Initial_boundary}.
    Let us assume that the  initial conditions for the space-discrete system $( s_{h,0}(x),  c_{h,0}(x))$ converge to $(s_0(x),c_0(x))$, i.e. $( s_{h,0}(x),  c_{h,0}(x))$ is uniformly bounded in $H^1(\Omega_h^+)$, and $(\mathcal{E}_h( s_{h,0}(x)), (\mathcal{E}_h( c_{h,0}(x)))$ converge to $(s_0(x),c_0(x))$ in $L^2(\Omega_+)$. Introducing  $\bar{s}_h=\mathcal{E}_h(s_h)$ and $\bar{c}_h=\mathcal{E}_h(c_h)$, we have that, for any $p\in[1,2]$ and for any $k < \frac{1}{p}$,  $(\bar{s}_h,\bar{c}_h)$ converges in $L^p([0,T],B^k_{p,p}(\Omega))$ to the unique solution $(s,c)$ to the system \eqref{eq:weak1}-\eqref{eq:weak2}.
\end{theorem}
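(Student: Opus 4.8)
The plan is the classical compactness‑plus‑identification route. First I would collect the $h$‑uniform a priori estimates already at our disposal: $\|s_h\|_{L^\infty_tL^2_x}^2+\|\dpiu_h s_h\|_{L^2_tL^2_x}^2\le K$ (Proposition \ref{remark:fs}), $\|c_h\|_{L^\infty_tH^1_x}\le K'$ (Proposition \ref{proposition:ch}), together with the pointwise bounds $0\le s_h\le\eta$ and $c_h$ confined to a compact subinterval of $(0,\infty)$ — hence $\varphi_{min}\le\varphi(c_h)\le\varphi_{max}$ — coming from Proposition \ref{proposition:sbounds} applied with $f=s_h$. These are the only inputs needed on the discrete side, and they are uniform in $0<h\le1$.

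\textbf{Time regularity and extraction of a limit.} The key preliminary step is a \emph{uniform} time‑regularity bound. Since $\partial_t c_h=-\lambda\varphi(c_h)s_hc_h$ is bounded in $L^\infty([0,T]\times\Omega_h^+)$, so is $\partial_t\varphi(c_h)=B\,\partial_tc_h$; rewriting the first equation of \eqref{eq:semi_discrete_sys} as an equation for $\partial_ts_h$ and using that $\dpiu_h,\dmen_h$ cost one derivative on the discrete Besov scale while $\varphi(c_h)\dpiu_hs_h$ and $\varphi(c_h)\dmen_hs_h$ are bounded in $L^2_tL^2_x$, I would bound $\partial_ts_h$ in $L^2([0,T],B^{-\gamma}_{2,2}(\Omega_h^+))$ for a suitable $\gamma>1$, uniformly in $h$. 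Interpolating this with the $L^2_tH^1_x$ bound through the difference characterisation of time–space Besov norms (Proposition \ref{difference_characterization}) yields a uniform bound of $s_h$ in $B^{\bar\alpha}_{r,r}([0,T],B^{\alpha_1}_{2,2}(\Omega_h^+))$ with $\bar\alpha>0$. Transferring to the continuum via the extension operator (Theorem \ref{theorem:extension}, and using $W^{1,2}\hookrightarrow W^{1,p}$ and $L^2_t\hookrightarrow L^p_t$ on bounded time–space sets for $p\in[1,2]$), the extensions $\bar s_h=\mathcal E_h(s_h)$ become uniformly bounded in $L^p([0,T],B^{1/p}_{p,p})$ and in a time–space Besov space on every bounded subinterval of $\Omega$. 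The compact embedding theorem for time–space Besov spaces, combined with a diagonal extraction over an exhaustion of $\Omega=\R_+$ and the uniform smallness of the $L^2$‑tails (inherited from the tails of the initial data), then produces a subsequence $\bar s_{h_n}\to\tilde s$ in $L^p([0,T],B^k_{p,p}(\Omega))$ for every $p\in[1,2]$ and $k<1/p$ — the cap $B^{1/p}_{p,p}$ on the image of $\mathcal E_h$ being exactly what pins the admissible range to $k<1/p$. For $c_h$, the $L^\infty_tH^1_x$ bound and the $L^\infty$ bound on $\partial_tc_h$ give, by an Aubin–Lions type argument, $\bar c_{h_n}\to\tilde c$ \emph{strongly} in $C([0,T],L^2_{loc})$ and weakly in $L^2_tH^1_x$; this strong convergence of $c_h$ (hence of $\varphi(\bar c_h)$) is what will make the nonlinear terms pass to the limit. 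Finally, Proposition \ref{proposition:product} gives $\mathcal E_h(\dpiu_hs_{h_n}),\mathcal E_h(\dmen_hs_{h_n})\rightharpoonup\partial_x\tilde s$ weakly in $L^2$, and likewise for $c_{h_n}$.

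\textbf{Identification of the limit.} I would test \eqref{eq:semi_discrete_sys} against the discretisations $\phi_{i,h}=\mathcal D_h(\phi_i(t,\cdot))$ of test functions $\phi_1,\phi_2\in C_0^\infty((0,T)\times(0,\infty))$, sum over $\Omega_h^+$ and integrate by parts using Lemma \ref{lem:discrete_int_parts}; since $\phi_{i,h}$ vanishes near $x=0$ for $h$ small, all boundary terms drop and one obtains a discrete weak form of \eqref{eq:weak1}–\eqref{eq:weak2}. Passing to the limit along $h_n$ is then a matter of combining Lemma \ref{lemma:cinfty} (convergence of $\mathcal E_h(\phi_{i,h})$, $\mathcal E_h(\dpiu_h\phi_{i,h})$, $\mathcal E_h(\partial_t\phi_{i,h})$ to $\phi_i,\partial_x\phi_i,\partial_t\phi_i$), Lemma \ref{remark:DEh}, the strong convergence of $\bar c_h$, the weak $L^2$ convergence of the difference quotients of $s_h$, and Proposition \ref{proposition:product} for the products (strong $\times$ weak). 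Every term converges and $(\tilde s,\tilde c)$ satisfies \eqref{eq:weak1}–\eqref{eq:weak2}; the boundary identity $\tilde s(t,0)=\psi_t$ is inherited from $s_h(t,0)=\psi_t$ through the uniform $W^{1,2}_x$ bound and the trace continuity of $\mathcal E_h$ (equivalently, by identifying $\tilde s$ with the mild solution via Lemma \ref{weak_versus_mild}), and the initial data converge by hypothesis. Thus $(\tilde s,\tilde c)$ is a weak solution in the sense of Definition \ref{def:continuous_weak_solution} with the correct data, so by Lemma \ref{weak_versus_mild} and the uniqueness in Theorem \ref{teo:wellposedness_continuuum_system} it coincides with $(s,c)$. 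Since the argument applies to every subsequence, the whole family $(\bar s_h,\bar c_h)$ converges to $(s,c)$ in $L^p([0,T],B^k_{p,p}(\Omega))$.

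\textbf{Expected main obstacle.} The hard part is the compactness step. Because $\psi$ is only $C^\beta$ with $\beta\in(1/4,1/2)$, $s_h$ inherits genuinely low regularity in time near the boundary, so no classical Aubin–Lions embedding involving a full time derivative is available; one must instead track the \emph{fractional} joint space–time smoothness in the Besov scale — which is precisely what the delicate exponent bookkeeping behind Lemma \ref{lem:forward_difference_dirac_besov}, Proposition \ref{proposition:estimate_u} and Proposition \ref{proposition:estimates_v} is designed for — and invoke the compact embedding for time–space Besov spaces, with the extra care forced by the unbounded domain $\R_+$, on which Besov embeddings are not globally compact so that a localisation plus a uniform tail bound is needed. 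A secondary, more technical point is the passage to the limit in $\varphi(c_h)\dpiu_hs_h$ and $\varphi(c_h)\dmen_hs_h$: these are products of an only weakly $L^2$‑convergent factor by $\varphi(c_h)$, so the argument genuinely needs the upgrade of the $c_h$‑convergence from weak to strong in $L^2_{loc}$.
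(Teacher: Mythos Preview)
Your proposal is correct and follows essentially the same compactness--plus--identification route as the paper: test \eqref{eq:semi_discrete_sys} against discretised test functions $\mathcal D_h(\phi_i)$, use Lemma \ref{remark:DEh} to transfer to integrals over $\Omega$, and pass to the limit using the convergence properties of $\mathcal E_h$ (Lemma \ref{lemma:cinfty}, Proposition \ref{proposition:product}) together with uniqueness of the continuum solution.

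The main difference is one of emphasis in the compactness step. You invest most effort in obtaining a time--space Besov bound for $s_h$ (via a negative--regularity bound on $\partial_t s_h$ and interpolation with the $L^2_tH^1_x$ estimate), whereas the paper is content with \emph{weak} $L^2$ convergence of $\bar s_h$ and $\mathcal E_h(D^{\pm}_h s_h)$ and focuses instead on upgrading $\bar c_h$ (and $\varphi(\bar c_h)\bar c_h$) to \emph{strong} $L^2$ convergence --- interpolating the $L^\infty_t H^1_x$ bound on $c_h$ with the $H^1_t L^p_x$ bound coming from $\partial_t c_h\in L^\infty$, and invoking compact Besov embeddings. This is enough for the identification step because every nonlinear product in the weak formulation pairs a weakly convergent $s_h$--factor with a strongly convergent $c_h$--factor. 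In particular, your ``expected main obstacle'' (low time regularity of $s_h$ inherited from $\psi$) is largely sidestepped by the paper: the strong limit needed is that of $c_h$, and $\partial_t c_h$ is genuinely bounded in $L^\infty$, so no fractional--in--time estimate is required at this stage. On the other hand, your extra work on time--space compactness for $s_h$ is what actually delivers strong convergence of $\bar s_h$ in the announced space $L^p_t B^k_{p,p}$, a point the paper's proof leaves implicit.
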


\begin{proof} 
By the a priori estimates proved in Proposition \ref{remark:fs}, we have that 
\[\sup_{0<h\leq 1}\|s_h \|_{L^{2}([0,T],H^1(\Omega_h^+))}  \]
which implies, up to a suitable subsequence, that $\bar{s}_h=\mathcal{E}_h(s_h)$, $D^{\pm}_h\bar{s}=\mathcal{E}_h(D^{\pm}s_h)$ converge to some function $\hat{s}$ and to its derivatives $\partial_x\hat{s}$, respectively, weakly in $L^2([0,T]\times \Omega_h^+).$ \\

Consider ${k}\in C^{\infty}_0([0,T] \times \Omega)$, and let $k_h=\mathcal{D}_h(k)$ be the discretization of $k$ in the space variable. Integrating by parts in the weak formulation of \eqref{eq:semi_discrete_sys} against a test function $k_h$, we get
\begin{equation*}
\begin{aligned}
   \int_0^T\int_{\Omega_h^+}\partial_t(k_h) (\varphi(c_h) s_h)dzdt & = \frac12 \int_0^T\int_{\Omega_h^+}\left( \dmen_h(k_h) \varphi(c_h) \dmen_h s_h+\dpiu_h(k_h)\varphi(c_h) \dpiu_h s_h\right) dz dt\\
   & \quad +\lambda \int_0^T\int_{\Omega_h^+} \varphi(c_h)s_hc_h k_h dzdt\\
   \int_0^T\int_{\Omega_h^+} \partial_t(k_h) c_h dzdt & = +\lambda \int_0^T\int_{\Omega_h}  \varphi(c_h)  s_h c_h k_h dz dt   
\end{aligned}
\end{equation*}
By Lemma \ref{remark:DEh} we get that
\begin{equation*} 
\begin{aligned}
    \int_0^T\int_{\Omega}\partial_t(k) (\varphi(\bar{c}_h) \bar{s}_h)dxdt & = \frac12 \int_0^T\int_{\Omega}\left( \bar{k}^-_h \varphi(\bar{c}_h) \bar{s}^-_h+\bar{k}^+_h\varphi(\bar{c}_h)  \bar{s}^+_h\right) dx dt\\
    &\quad +\lambda \int_0^T\int_{\Omega}k \varphi(\bar{c}_h)\bar{c}_h\bar{s}_h dxdt,\\
    \int_0^T\int_{\Omega} \partial_t(k) \bar{c}_h dxdt & =+ \lambda \int_0^T\int_{\Omega} k \varphi(\bar{c}_h) \bar{c}_h \bar{s}_h  dx dt,  
\end{aligned} 
\end{equation*}
where $\bar{c}_h=\mathcal{E}_h(c_h)$, 
$\bar{s}_h=\mathcal{E}_h(s_h)$,
$\bar{k}^{\pm}_h=\mathcal{E}_h(D_h^{\pm}k_h)$ and $\bar{s}^{\pm}_h=\mathcal{E}_h(D_h^{\pm}s_h)$ .

We need $\bar{c}_h$ to converge at least strongly in $L^2([0,T]\times\Omega)$ to $\hat{c}$. We also need that $\bar{k}^{\pm}$ converges to $k$, strongly in $L^{\infty}([0,T]\times\Omega)$ (but this holds by Lemma \ref{lemma:cinfty}). Finally, we need that $\varphi(\bar{c}_h)\bar{c}_h$ converges to $\varphi(\hat{c})\hat{c}$ strongly in $L^2([0,T]\times\Omega)$.
\\ 
From Proposition \ref{proposition:ch}, we have that 
\[\sup_{0<h\leq 1} \|c_h \|_{L^{\infty}([0,T],H^{1}(\Omega_h^+))} < +\infty\]
when $c_0\in H^1(\Omega)$. We can exploit the second equation of the system \eqref{eq:semi_discrete_sys} to deduce that
\[
\sup_{0 < h \leq 1} \|c_h \|_{H^1([0,T],L^p(\Omega_h^+))}<+\infty
\]
Using interpolation inequalities on the previous two quantities we get that for any $q \in [1,+\infty)$ there is $\alpha,\delta>0$ small enough such that
\[ \sup_{0<h\leq 1}\|c_h\|_{B^{\alpha}_{\infty,\infty}([0,T],B^{\delta}_{q,q}(\Omega_h^+))} < +\infty\]
By Theorem \ref{theorem:extension}, this implies that 
\[ \sup_{0<h\leq 1}\|\varphi(\bar{c}_h) \bar{c}_h\|_{B^{\alpha}_{\infty,\infty}([0,T],B^{\delta}_{q/2,q/2}(\Omega))} < +\infty\]
and, by compact immersion of Besov spaces and Proposition \ref{proposition:product}, we get that $\varphi(\bar{c}_h)\bar{c}_h$ converges to $\varphi(\hat{c})\hat{c}$ strongly in  $L^2([0,T],L^2(\Omega))=L^2([0,T] \times \Omega)$. In a similar way it is possible to prove that $\bar{c}_h$ converges to $\hat{c}$ in $L^2([0,T]\times \Omega)$.\\
Putting all the previous convergences together we obtain, taking $h\rightarrow 0$, that 
\begin{equation*}
\begin{aligned}
   \int_0^T\int_{\Omega}\partial_t(k) (\varphi(\hat{c}) \hat{s})dxdt & = \frac12 \int_0^T\int_{\Omega}\left( \partial_x(k) \varphi(\hat{c}) \partial_x(\hat{s})+\partial_x(k)\varphi(\hat{c})  \partial_x(\hat{s})\right) dx dt\\
   & \quad +\lambda \int_0^T\int_{\Omega}k \varphi(\hat{c})\hat{c}\hat{s} dxdt \\
  \int_0^T\int_{\Omega} \partial_t(k) \hat{c} dxdt & = + \lambda \int_0^T\int_{\Omega} k \varphi(\hat{c}) \hat{c} \hat{s}  dx dt. 
  \end{aligned}
\end{equation*}
Thus, by the uniqueness of the weak solution $ (s,c)$ to equation \eqref{eq:s2}-\eqref{eq:c2}, we get that $(\hat{s},\hat{c})=(s,c)$. Since $\bar{s}_h=\mathcal{E}_h(s_h)$ and $\bar{c}_h=\mathcal{E}_h(c_h)$ converge to $\hat{s}=s$ and $\hat{c}=c$ respectively, we have that the solution to the semi-discrete equation converges to the solution to the corresponding equation in the continuum space $[0,T]\times \Omega$.
\end{proof}

\subsection{A fully discrete system: convergence results}
Let us introduce a fully discrete numerical scheme for the system  \eqref{eq:deterministic_intro}. See also \cite{2024AMU_numerico} for a similar scheme in the case of a specific dynamical boundary process. Let us discretize in time  the space-discrete scheme \eqref{eq:semi_discrete_sys}    by forward finite differences. The convergence of the fully discrete system to the space-discrete one is provided here to guarantee the desired convergence of the numerical scheme to the unique mild solution of the original problem.

\smallskip

Let $k,h$ be the time and space dimensions of an equi-partition of $[0,T]\times \mathbb R_+$  $$  \Xi_k \times \Omega_h^+ =[0, k  , 2k , ..., N_k =T] \times \Omega_h^+,$$ with  $ k=\frac{T}{N}$ and $\Omega_h^+=h\mathbb Z_+$. Furthermore,  let  us put ${ {\bar \Delta}} = \frac{k}{h^2}$. 

\begin{notations}[The discrete functions]\label{not:discrete_functions}
    The following notations are considered: for any $(n,m)\in \{0,\ldots,N\}\times\mathbb{N}$, $(t_n,x_m)=(nk,mh)$,  so that for a function $g$, $g_m^n$ denotes the discrete counter part, i.e.  $g_m^n=g(t_n,x_m)=g(nk,mh)$. If we would like to make explicit the dependence upon $h$ and $k$, we denote by $(s_h,c_h)$ the solution to the space-discrete equation \eqref{eq:semi_discrete_sys} and by $(s_h^{k},c_h^{k})$ the solution to the fully discrete, both in space and in time, numerical scheme. However when it is not strictly needed, in particular when $k$ and $h$ are fixed, we omit them; so that  $(s_{h,m}^{k,n},c_{h,m}^{k,n})=(s_{h}^{k}(t_n,x_m),c_{h}^{k}(t_n,x_m))=:(s_{m}^{n},c_{m}^{n})$ and  $(s_{h,m},c_{h,m})=(s_{h}(x_m),c_{h}(x_m))=:(s_{m},c_{m})$. 
\end{notations}
\begin{notations}(The interpolated functions) \label{not:interpolation_functions}
    Let us denote by { $\bar{s}^{k}_{h}(t,x)$} the function $\bar{s}^{k}_{h}:[0,T] \times \R_+ \rightarrow \R$ such as if $t=t_n, x=x_m$,  $\bar{s}^{k}_{h}(t_n,x_m):=s^{k,n}_{h,m}$,  the solution to the numerical scheme; otherwise if $t\not= t_n$ and $x\not= x_m$, $\bar{s}^{k}_{h}(t,x)$  is extended by linear interpolation in the $t$ variable (for any fixed $x$) and pice-wise constant interpolation in $x$.\\ Furthermore,  let us denote by $\psi^k(t):[0,T] \rightarrow \R_+$ the boundary process such that $\psi^{k,n}:=\psi^k(t_n)$ so that $\psi^{k,n} $ is a converging discrete approximation of  $\psi$, while  for $t\not=t_n$  $\psi^k(t)$ is extended by linear interpolation in time.
\end{notations}

In order to get a (fully discrete) numerical  scheme, let us first observe that, by \eqref{eq:chain_rule_D},
\begin{equation*}
\begin{aligned}
    & \frac12 \left[ \dpiu_h  (\varphi(c_m^n)\dmen_hs_m^n)+\dmen_h (\varphi(c_m^n)\dpiu_h s_m^n)\right]  =   \varphi(c_m^n)\frac{s_{m+1}^n-2s_m^n+s_{m-1}^n}{h^2}   \\
    &\qquad \quad  + \frac{1}{2h^2}\big[s_{m+1}^n(\varphi(c_{m+1}^n)-\varphi(c_m^n)) + s_m^n(-\varphi(c_{m+1}^n)-\varphi(c_{m-1}^n)+2\varphi(c_m^n)) \\
    &\qquad \quad +s_{m-1}^n(-\varphi(c_m^n)+\varphi(c_{m-1}^n)) \big].
\end{aligned}
\end{equation*}

Hence, by  applying the finite difference operator to the time derivative, from \eqref{eq:discretesystem} and \eqref{eq:system_s_c_discrete} we obtain the following scheme for (s,c): for any $(n,m)\in \{\textcolor{red}{1},\ldots, N\}\times\mathbb{N}$
   \begin{eqnarray*}\label{scheme_s}
   s_m^{n+1} & =& \bar{\Delta}\left(1+\frac{\varphi(c_{m+1}^n)-\varphi(c_m^n)}{2\varphi(c_m^n)}\right)s_{m+1}^n +  \bar{\Delta}\left(1-\frac{\varphi(c_{m}^n)-\varphi(c_{m-1}^n)}{2\varphi(c_m^n)}\right)s_{m-1}^n\\
   && + \left(1 -2\bar{\Delta} - \frac{\varphi(c^n_{m+1})+\varphi(c^n_{m-1})}{2\varphi(c^n_m)}\bar{\Delta}-\lambda k c_m^n (1-B s_m^n)\right)s_m^n;
\\
c_m^{n+1} &=& c_m^n \,e^{-  \lambda k s_m^n\, \varphi(c_m^n)};
\end{eqnarray*}
 coupled with the boundary conditions $s_0^{n+1}= \tilde{\psi}^{n+1}; \quad c_0^{n+1}=c_0^n \,e^{-  \lambda k \tilde{\psi}^{n}\, \varphi(c_0^n)}.$ Note that $\tilde{\psi}^n=\tilde{\psi}(t_n)$ is a converging approximation in time of the stochastic boundary condition.
\begin{remark}
Note that in   \cite{2024AMU_numerico} a slightly different scheme has been introduced in a bounded space domain, so that a Neumann boundary condition has been employed.   The only difference in the derivation of the scheme is that  in \cite{2024AMU_numerico}  a centered finite differences are considered. Furthermore, a specific random boundary condition  given by \eqref{eq:Pearson_SDE_boundary} is used.
\end{remark} 

\subsubsection{Convergence of the fully discrete to the space-discrete system}

\begin{proposition}[Scheme stability]\label{boundness_stability}
Let us  suppose that 
\begin{eqnarray} \label{scheme_stability_1}
    k &\le& \frac{h^2}{2} \wedge \frac{h^2}{2+\lambda c_0h^2 (1-B{\eta})}, \quad   B<0;\\ \label{scheme_stability_2}
    k &\le&  \frac{h^2}{2} \wedge \frac{h^2}{2+\frac{\varphi_{max}}{\varphi_{min}}+\lambda c_0 h^2}, \quad   0<B<1/\eta.
\end{eqnarray}
Then the solution of the system is such that, for any $(n,m)\in \{0,1,\ldots,N\}\times \mathbb{N}$ and for any trajectory $\omega\in\Omega$
\begin{equation*}
    (s^n_m(\omega), c_m^n(\omega)) \in \mathbb [0,{\eta}) \times   [0,{c}_0].
\end{equation*}
\end{proposition}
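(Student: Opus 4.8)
The plan is to prove both inclusions by a \textbf{discrete maximum principle}, carried out by induction on the time index $n\in\{0,1,\dots,N\}$; the inductive statement is that, for every lattice node $m\ge0$ and every $\omega\in\Omega$, one has $0\le s^n_m<\eta$ and $0\le c^n_m\le c_0$. For $n=0$ this is the hypothesis on the initial data (Assumptions \ref{assumption:_bound_s0_c0_varphi}, together with the boundedness of $(s_{h,0},c_{h,0})$), and for the boundary node $m=0$ it holds at every level since $s^n_0=\tilde\psi^n\in[0,\eta)$, the time-discrete boundary approximation $\tilde\psi$ being $[0,\eta)$-valued by construction (cf. Proposition \ref{prop:wellposedness_psi} and \cite{2024AMU_numerico}). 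So we fix $n$, assume the statement at level $n$, and prove it at level $n+1$ for the interior nodes.

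The $c$-component is immediate: from $c^{n+1}_m=c^n_m\,e^{-\lambda k\,s^n_m\,\varphi(c^n_m)}$, the inductive hypothesis $c^n_m\in[0,c_0]$ and \eqref{eq:limit_porosity} give $\varphi(c^n_m)\ge\varphi_{min}>0$, and since $s^n_m\ge0$ the exponent is $\le0$; hence $0\le c^{n+1}_m\le c^n_m\le c_0$. The same computation also yields $c^{n+1}_m>0$ (the initial datum $c_0(\cdot)$ being bounded below by a positive constant), which will make the upper bound on $s$ strict. The boundary update $c^{n+1}_0=c^n_0\,e^{-\lambda k\,\tilde\psi^n\,\varphi(c^n_0)}$ is handled identically.

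For the $s$-component, rewrite the explicit scheme — obtained from the chain rule \eqref{eq:chain_rule_D} for $\tfrac12[\dpiu_h(\varphi(c^n_m)\dmen_h s^n_m)+\dmen_h(\varphi(c^n_m)\dpiu_h s^n_m)]$ together with a forward time difference — as the three-point combination
\begin{align*}
 s^{n+1}_m&=\alpha^n_m\,s^n_{m+1}+\beta^n_m\,s^n_{m-1}+\gamma^n_m\,s^n_m,\\
 \alpha^n_m&=\bar{\Delta}\,\frac{\varphi(c^n_m)+\varphi(c^n_{m+1})}{2\,\varphi(c^n_m)},\qquad
 \beta^n_m=\bar{\Delta}\,\frac{\varphi(c^n_m)+\varphi(c^n_{m-1})}{2\,\varphi(c^n_m)},
\end{align*}
with $\gamma^n_m$ the remaining diagonal coefficient. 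Positivity of $\alpha^n_m$ and $\beta^n_m$ is immediate since $\varphi>0$, so once $\gamma^n_m\ge0$ is known, the inductive hypothesis $s^n_{m-1},s^n_m,s^n_{m+1}\ge0$ gives $s^{n+1}_m\ge0$. Moreover the discrete elliptic operator above is conservative (its stencil weights sum to zero), so a short computation gives $\alpha^n_m+\beta^n_m+\gamma^n_m=1-\lambda k\,c^n_m(1-Bs^n_m)$; the factor $1-Bs^n_m$ is strictly positive in both regimes — it equals $1+|B|s^n_m$ when $B<0$, and it is $\ge1-B\eta>0$ when $0<B<1/\eta$ (using $s^n_m<\eta$) — so, since $c^n_m>0$, one has $\alpha^n_m+\beta^n_m+\gamma^n_m<1$. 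Together with the inductive bound $0\le s^n_{m-1},s^n_m,s^n_{m+1}\le\eta$ this gives $0\le s^{n+1}_m\le(\alpha^n_m+\beta^n_m+\gamma^n_m)\,\eta<\eta$, which closes the induction; the boundary node is covered by $s^{n+1}_0=\tilde\psi^{n+1}\in[0,\eta)$.

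The only substantive point — and the unique place where the step-size restriction is used — is the \emph{nonnegativity of $\gamma^n_m$}. Writing $\gamma^n_m$ out and inserting the inductive bounds $\varphi(c^n_{m\pm1})/\varphi(c^n_m)\le\varphi_{max}/\varphi_{min}$ (from \eqref{eq:limit_porosity}), $c^n_m\le c_0$, and $1-Bs^n_m\le1-B\eta$ when $B<0$ (resp. $1-Bs^n_m\le1$ when $0<B<1/\eta$), the requirement $\gamma^n_m\ge0$ reduces precisely to the two-sided step-size conditions \eqref{scheme_stability_1} (for $B<0$) and \eqref{scheme_stability_2} (for $0<B<1/\eta$): the constraint $k\le h^2/2$, i.e.\ $\bar{\Delta}\le1/2$, absorbs the pure-diffusion weights, while the second, tighter bound absorbs the porosity cross-terms together with the reaction contribution $\lambda k\,c^n_m(1-Bs^n_m)$. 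I expect this verification — faithfully tracking all the $\varphi$-dependent weights produced by $\tfrac12[\dpiu_h(\varphi\dmen_h s)+\dmen_h(\varphi\dpiu_h s)]$ and matching them to the two displayed conditions, with the $B<0$ case requiring the sign structure $\varphi=A-|B|c$ to be exploited somewhat more carefully than the $B>0$ case — to be the main, though essentially routine, obstacle.
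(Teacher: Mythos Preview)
Your proposal is correct and follows essentially the same discrete maximum principle argument as the paper: induction on $n$, positivity of the off-diagonal coefficients $\alpha^n_m,\beta^n_m$ from $\varphi>0$, nonnegativity of the diagonal coefficient $\gamma^n_m$ from the CFL-type restrictions \eqref{scheme_stability_1}--\eqref{scheme_stability_2}, and the identity $\alpha^n_m+\beta^n_m+\gamma^n_m=1-\lambda k\,c^n_m(1-Bs^n_m)<1$ for the strict upper bound. Your write-up is in fact more self-contained than the paper's own proof, which defers the $c$-bound, the upper bound on $s$, and the $B<0$ case of the $\gamma^n_m\ge 0$ verification to the companion reference \cite{2024AMU_numerico}.
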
 
\begin{proof} It is easy to prove that  $c^{n+1}_m \in [0,c_0], $ {whenever $s_m^n \in [0,\eta)$}  ( \cite{2024AMU_numerico}).
 We show that  the first coefficient in the r.h.s of the scheme for $s$ is positive; indeed, \begin{eqnarray*}
   1+\frac{\varphi{(c^n_{m+1})}-\varphi{(c^n_{m})}}{2\varphi(c_m^n)}
     &=&   \frac{2\varphi(c_m^n)+\varphi{(c^n_{m+1})}-\varphi{(c^n_{m})}}{2\varphi(c_m^n)} =
    \frac{\varphi{(c^n_{m+1})}+\varphi{(c^n_{m})}}{2\varphi(c_m^n)} >0.
\end{eqnarray*}
Analogously,
 \begin{eqnarray*}
   1-\frac{\varphi{(c^n_{m})}-\varphi{(c^n_{m-1})}}{2\varphi(c_m^n)}
     &=&   \frac{2\varphi(c_m^n)-\varphi{(c^n_{m})}+\varphi{(c^n_{m-1})}}{2\varphi(c_m^n)} 
     = \frac{\varphi{(c^n_{m})}+\varphi{(c^n_{m-1})}}{2\varphi(c_m^n)} >0.
\end{eqnarray*}

An upper bound for $s^{n+1}_m$ can be derived as in \cite{2024AMU_numerico} for $B<0$ and for { $0<B<1/\eta$.} 
For the proof of the lower bound, we need to prove that also the last coefficient of the discrete equation for $s$ is positive; the case in which $B<0$ has been discussed in \cite{2024AMU_numerico} while for $B>0$ we have the following 
\begin{eqnarray*}
    & 1- \bar{\Delta} \left(2  -\frac{\varphi(c^n_{m+1})+\varphi(c^n_{m-1})}{2\varphi(c^n_m)}\right) - \lambda k    c_m^n \left(1-B s_m\right)\ge 1-2 \frac{k}{h^2}-\frac{\varphi(c^n_{m+1})+\varphi(c^n_{m-1})}{2\varphi(c^n_m)}\frac{k}{h^2}-  \lambda k   \bar{c}_0 \\
     &\ge1-2 \frac{k}{h^2}-\frac{2\varphi_{ax}}{\varphi_{min}}\frac{k}{h^2}-  \lambda k    \bar{c}_0  =\frac{1}{h^2}\left[h^2-k\left(2  + \frac{2\varphi_{max}}{\varphi_{min}}+ h^2\lambda     \bar{c}_0  \right)\right]
\end{eqnarray*}
Then stability  thesis is achieved. 
\end{proof}

In order to gain the convergence of the fully discrete system to the space-discrete one we plan to apply  the Lax equivalence theorem. Since the stability has been established, it is sufficient to ensure the consistency of the numerical scheme. 
\begin{proposition}
Fix $h\in \R_+$. Let us suppose that the stability  conditions of Proposition \ref{boundness_stability} on $k$ holds, and that {$\psi^k \rightarrow \psi$} in $\mathcal{C}^\beta([0,T]), \forall \beta \in (1/4,1/2). $ Then 
\begin{equation*}
\lim_{k \rightarrow 0} \sup_{t_n \in [0,T]} \| s_{h,m}^{k,n}-s_h(t_n,\cdot) \|_{H^1(\Omega_h^+)} = 0.
\end{equation*}
\end{proposition}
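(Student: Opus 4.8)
The plan is to read the fully discrete scheme as the explicit (forward) Euler time stepping applied to the space-discrete Cauchy problem \eqref{eq:semi_discrete_sys} --- with the $c$-equation advanced by the exponential-Euler rule $c^{n+1}_m = c^n_m\,e^{-\lambda k\,\varphi(c^n_m)\,s^n_m}$ --- and then to close the error by a Lax-type argument: stability is already furnished by Proposition \ref{boundness_stability}, so only consistency needs to be checked and the estimate closed by discrete Gronwall. The decisive simplification is that $h>0$ is \emph{fixed}: on the lattice $\Omega_h^+$ the operators $D^\pm_h$ and $\Delta_h$ are bounded on $L^2(\Omega_h^+)$ (norms $\le 2/h$, $\le 4/h^2$), so \eqref{eq:semi_discrete_sys} is a genuine ODE in $\mathcal H_h:=L^2(\Omega_h^+)\times L^2(\Omega_h^+)$ whose right-hand side $F_h$ --- incorporating the boundary datum $\psi_t$ as a forcing acting at the node $x=h$ and the explicit boundary value of $c$ at $x=0$ --- is locally Lipschitz. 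On the $L^\infty$-bounded region $\mathcal R_h:=\{0\le w_1\le\eta,\ 0\le w_2\le c_0\}$ it is in fact Lipschitz with a constant $L_h$ depending on $h$ but not on $k$; here one invokes the $L^\infty$ a priori bounds of Section \ref{section:apriori} (Propositions \ref{proposition:sbounds}, \ref{remark:fs}, \ref{proposition:ch}), which place the space-discrete solution $(s_h(t),c_h(t))$ in $\mathcal R_h$ for all $t$, together with Proposition \ref{boundness_stability}, which places the fully discrete iterates $(s^n,c^n)$ in $\mathcal R_h$ for all $n$.

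First I would record the time regularity of the space-discrete solution. Since $F_h$ is locally Lipschitz and its $t$-dependence is only through $\psi_t\in C^\beta$, the solution $(s_h,c_h)$ is $C^1$ in time with values in $L^2(\Omega_h^+)$; moreover $F_h$ maps $\mathcal R_h$ (with the bounded boundary values) into a bounded subset of $L^2(\Omega_h^+)$, so $\|\partial_t s_h(t)\|_{L^2}+\|\partial_t c_h(t)\|_{L^2}\le C_h$ uniformly in $t$, whence $(s_h,c_h)$ is Lipschitz in time and $t\mapsto F_h(t,s_h(t),c_h(t))$ is $C^\beta$ in time. Consequently the local truncation error
\[
\rho^n:=(s_h,c_h)(t_{n+1})-(s_h,c_h)(t_n)-k\,F_h\big(t_n,(s_h,c_h)(t_n)\big)
\]
(with the exponential-Euler correction for the $c$-component) obeys $\|\rho^n\|_{\mathcal H_h}\le C_h\,k^{1+\beta}$: the Lipschitz-in-time regularity gives the $k^2$ part, while the term $|\psi_\tau-\psi_{t_n}|\le\|\psi\|_{C^\beta}k^\beta$ for $\tau\in[t_n,t_{n+1}]$, entering $\Delta_h s_h$ at the node $x=h$, gives the $k^{1+\beta}$ part.

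Next I would subtract the scheme from the exact one-step relation. The error analysis is carried out for the differences $e^n:=s^n-s_h(t_n)$ and $f^n:=c^n-c_h(t_n)$, which --- unlike $c^n$ and $c_h(t_n)$ separately --- lie in $L^2(\Omega_h^+)$, since they vanish at $n=0$ (the two schemes share the discretized initial datum) and the one-step maps are Lipschitz on $\mathcal R_h$ in the $L^2$-norm of such differences. Setting $E^n:=\|e^n\|_{L^2(\Omega_h^+)}+\|f^n\|_{L^2(\Omega_h^+)}$ and using the Lipschitz property of $F_h$ on $\mathcal R_h$ and of the exponential-Euler update, one gets
\[
E^{n+1}\le(1+k\bar L_h)\,E^n+k\bar L_h C_h\,\|\psi^k-\psi\|_{C^0([0,T])}+C_h\,k^{1+\beta},
\]
the middle term accounting for the boundary mismatch $\psi^{k,n}$ versus $\psi_{t_n}$ (the discrete boundary value of $c$, being its exponential update driven by $\psi^{k,n}$, is controlled the same way). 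With $E^0=0$, the discrete Gronwall lemma yields $\sup_{t_n\in[0,T]}E^n\le e^{\bar L_h T}\big(T\bar L_h C_h\|\psi^k-\psi\|_{C^0([0,T])}+T C_h k^\beta\big)\to0$ as $k\to0$. Finally, since for fixed $h>0$ one has $\|\cdot\|_{H^1(\Omega_h^+)}\le(1+2/h)\|\cdot\|_{L^2(\Omega_h^+)}$ on the lattice, the $L^2$-convergence of $e^n$ upgrades to the asserted convergence in $H^1(\Omega_h^+)$.

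The routine parts are the discrete Gronwall iteration and the lattice norm equivalence. The main obstacle is the consistency/Lipschitz bookkeeping for the nonlinear terms containing \emph{difference derivatives of the unknowns}, i.e. verifying that on $\mathcal R_h$ and for fixed $h$ the maps $(s,c)\mapsto b_c\,D^\pm_h c\,D^\pm_h s$ (with $b_c$ a smooth, bounded function of $c$) are Lipschitz into $L^2(\Omega_h^+)$; this rests on the boundedness of $D^\pm_h$ on $L^2(\Omega_h^+)$, on the uniform $L^\infty$ bounds on both solutions, and, for the summand where the coefficient $b_c$ itself varies, on the lattice embedding $\|g\|_{L^\infty(\Omega_h^+)}\le h^{-1/2}\|g\|_{L^2(\Omega_h^+)}$. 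Crucially none of these constants need be uniform in $h$, which is exactly what makes the argument elementary once the $h$-uniform a priori estimates of Section \ref{section:apriori} are in hand.
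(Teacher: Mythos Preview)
Your proposal is correct and follows essentially the same Lax-type strategy as the paper: the fully discrete scheme is read as explicit Euler for the space-discrete ODE, consistency is established via a local truncation error, and the global error is closed by the Lipschitzianity of the right-hand side (which, for fixed $h$, is automatic since $D^\pm_h,\Delta_h$ are bounded on $L^2(\Omega_h^+)$) together with discrete Gronwall and the lattice norm equivalence $H^1(\Omega_h^+)\simeq L^2(\Omega_h^+)$.

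Your version is in fact more careful than the paper's in two respects. First, the paper obtains the LTE by a Taylor expansion $s_m(t_n+k)=s_m(t_n)+k\partial_t s_m+\tfrac{k^2}{2}\partial_t^2 s_m+\mathcal O(k^3)$, which tacitly assumes $C^2$ time regularity of $s_h$; since the boundary datum is only $\psi\in C^\beta$, your bound $\|\rho^n\|\le C_h k^{1+\beta}$ is the honest one, and still yields convergence. Second, you make explicit the boundary-mismatch term $\|\psi^k-\psi\|_{C^0}$ and the fact that the analysis must be carried out for the \emph{differences} $(e^n,f^n)$ rather than for $(s^n,c^n)$ themselves, because $c_h$ and $c^n$ are bounded but not in $L^2(\Omega_h^+)$; the paper leaves both points implicit. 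The only delicate step you flag---the Lipschitz bound for $(s,c)\mapsto b_c\,D^\pm_h c\,D^\pm_h s$---indeed goes through using the $L^\infty$ bounds from Proposition~\ref{boundness_stability} and Propositions~\ref{proposition:sbounds}--\ref{proposition:ch} together with the lattice embedding $\|g\|_{L^\infty(\Omega_h^+)}\le h^{-1/2}\|g\|_{L^2(\Omega_h^+)}$, exactly as you outline.
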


\begin{proof}
 
 Let us estimate the  local truncation error (LTE) in time,  defined by replacing the solution $s_m^n$ by the \textit{true} solution $s_m(t_n)$ . In fact, the solution $s_m(t_n)$ does not exactly satisfy the equation and the discrepancy is accounted for by the LTE. We denote it by
\begin{equation}\label{eq:lte}
    \tau_m^n : = \frac{s_m(t_n+k)-s_m(t_n)}{k} - \widetilde{f}_m(t_n)
\end{equation}
where\begin{align*}
    \widetilde{f}_m(t_n) : &= \frac{1}{2\varphi(c_m(t_n))}\left[\dpiu_h\varphi(c_m(t_n))\dmen_h s_m(t_n)+\dmen_h\varphi(c_m(t_n))\dpiu_h s_m(t_n)\right]\\
    &-\lambda c_m(t_n)s_m(t_n)+\lambda B c_m(t_n)s_m^2(t_n)
\end{align*}
Since we do not have an explicit representation for $s_m(t_n)$, we should provide a Taylor expansion in the nodes of the time  as in the following
\begin{equation*}
    s_m(t_n+    k):= s_m(t_n)+ k\partial_t s_m(t_n) + \frac{k^2}{2}\partial_t^2 s_m(t_n)+ \mathcal{O}(k^3)
\end{equation*}
and by replacing the previous expression in \eqref{eq:lte} we get
\begin{align*}
    \tau_m^n : & = \frac{s_m(t_n)}{k} + \partial_t s_m(t_n) + \mathcal{O}(k) - \frac{s_m(t_n)}{k} - \widetilde{f}_m(t_n)\\
    & = \partial_t s_m(t_n) - \widetilde{f}_m(t_n) + \mathcal{O}(k) = \mathcal{O}(k),
\end{align*}
since $s_m(t_n)$ is a solution of \eqref{eq:system_s_c_discrete}.
Therefore, the method is (locally) consistent with order 1 in time. To prove that the global error is $O(k)$ it is sufficient to have $\widetilde{f}_m$ locally Lipschitz. And this is true as we discussed in the proof of Theorem \ref{well-posedness_semi_discrete}. Indeed, by  the a priori estimates on $(s_h,c_h)$ proved in Section 6, the local Lipschitzianity property of $\widetilde{f}_m(t_n)$ is equivalent to the global Lipschitzianity property (see, e.g., \cite{quarteroni2010numerical}, Section 11.3.).
\end{proof}

\begin{remark}
As a consequence of the proof it is possible to see, that, for fixed $h \in \R_+$, the convergence rate in $k$ is linear. Unfortunately, since the equivalence constants between the spaces $H^1(\Omega_h^+)$ and $L^2(\Omega_h^+)$ and the Lipschitz constant in the function $\widetilde{f}_m$ explode as $h \rightarrow 0$, the rate of convergence is not uniform in $h$. 
\end{remark}
 
\subsubsection{Convergence of the fully discrete to the continuum system}
  
Let us consider the Notation \ref{not:interpolation_functions}. We finally provide the last convergence result.

\begin{theorem}[Final convergence result]
 Le us suppose that $( s_{h,0}(x),  c_{h,0}(x))$ converge to $( s_{0}(x), c_{0}(x))$, in the sense of Theorem  \ref{def:weak_formulation}, and that  the boundary functions $\psi^k(t)$,  as $k$ tends to zero, converges to $\psi(t)$ in $L^{\infty}([0,T]).$ Then
    there exists a function $k(h)$, growing with respect to $h$, and such that $k(h)\rightarrow 0$ as $h \rightarrow 0$, for which,
\begin{equation*}
\lim_{h \rightarrow 0}\|s_h^{k(h)}-s \|_{L^{\infty}([0,T],H^1(\R_+))}=0
\end{equation*}
\end{theorem}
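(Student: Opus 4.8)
The plan is a diagonal argument that glues together the two convergences already established: the convergence of the space-discrete scheme to the continuum system (Theorem \ref{def:weak_formulation}) and the convergence, at fixed $h$, of the fully discrete scheme to the space-discrete one (Proposition \ref{boundness_stability} together with the local truncation error estimate proved above, i.e. the Lax equivalence theorem). Writing $\bar s_h=\mathcal E_h(s_h)$ for the piecewise-constant extension of the space-discrete solution and $\bar s_h^{k}$ for the interpolated fully discrete solution of Notation \ref{not:interpolation_functions}, I would split
\[
\bar s_h^{k}-s=\big(\bar s_h^{k}-\bar s_h\big)+\big(\bar s_h-s\big),
\]
and choose the time step $k=k(h)$, depending on $h$, so that both summands vanish as $h\downarrow 0$.

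For the second summand, Theorem \ref{def:weak_formulation} gives $\bar s_h\to s$ as $h\downarrow 0$ in $L^p([0,T],B^k_{p,p}(\R_+))$ for every $p\in[1,2]$ and $k<1/p$ (e.g. in $L^2([0,T],H^k(\R_+))$, $k<1/2$), provided the initial data converge as assumed and the boundary data $\psi^{k}$ converge to $\psi$; along the diagonal this holds because $\psi^{k}\to\psi$ and the a priori bounds of Section \ref{section:apriori} are uniform in $h$ and in $k$. For the first summand, at fixed $h$ the consistency estimate yields $\sup_{t_n}\|s_{h,m}^{k,n}-s_h(t_n,\cdot)\|_{H^1(\Omega_h^+)}\le C(h)\,k$ with $C(h)<+\infty$; pushing this through the extension operator $\mathcal E_h$ — bounded uniformly in $0<h\le 1$ from $W^{1,2}(\Omega_h^+)$ into $B^{1/2}_{2,2}(\R_+)$ by Theorem \ref{theorem:extension} — and using that time-linear interpolation is a bounded operation, I get $\|\bar s_h^{k}-\bar s_h\|\le C'(h)\,k$ in the relevant norm. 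A standard diagonal selection then produces a non-decreasing function $h\mapsto k(h)$ with $k(h)\le h$, $k(h)\to 0$, and $C'(h)\,k(h)\to 0$ as $h\downarrow 0$; the triangle inequality above then gives $\bar s_h^{k(h)}\to s$, which is the claim (the stated $L^\infty_tH^1_x$ formulation being understood via this identification of spaces, since the space-discrete-to-continuum limit lives in the fractional Besov scale and piecewise-constant extensions are not in $H^1$ in the space variable).

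The main obstacle is exactly the non-uniformity in $h$ of the time-discretization error: the constant $C(h)$, and with it the equivalence constant between $H^1(\Omega_h^+)$ and $L^2(\Omega_h^+)$ and the Lipschitz constant of the reaction--advection nonlinearity hidden in $C'(h)$, all diverge as $h\to 0$, so $h$ and $k$ cannot be sent to zero independently; the coupling $k=k(h)$ has to be calibrated \emph{after} $C'(h)$ is known, which is precisely what the diagonal choice accomplishes. A secondary technical point is reconciling the two limit statements in a common function space: one uses the uniform boundedness of $\mathcal E_h$ from Theorem \ref{theorem:extension} and the compact embeddings $B^{1/p}_{p,p}(\R_+)\hookrightarrow B^{k}_{p,p}(\R_+)$ already exploited in the proof of Theorem \ref{def:weak_formulation}, so that the $H^1(\Omega_h^+)$-control of the time error and the Besov convergence of the spatial scheme can be combined consistently.
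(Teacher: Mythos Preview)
Your argument is essentially the same as the paper's: split via the triangle inequality into $(\bar s_h^{k}-\bar s_h)+(\bar s_h-s)$, control the second piece by Theorem \ref{def:weak_formulation}, control the first by the fixed-$h$ time-discretization convergence, and choose $k(h)$ diagonally so that the (non-uniform in $h$) time error is made small enough, e.g.\ $\le h$. The paper's proof is slightly terser---it simply picks $k(h)$ so that $\sup_{t_n}\|s_h(t_n,\cdot)-s_{h}^{k(h),n}\|\le h$ and then invokes Theorem \ref{def:weak_formulation}---but the mechanism is identical.

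Your parenthetical remark about the norm is actually sharper than the paper: Theorem \ref{def:weak_formulation} only yields convergence of $\bar s_h$ to $s$ in $L^p([0,T],B^{k}_{p,p}(\R_+))$ with $k<1/p$, not in $L^\infty([0,T],H^1(\R_+))$, and indeed piecewise-constant extensions never lie in $H^1(\R_+)$. The paper's own proof writes $\|s-s_h\|_{L^\infty([0,T],H^1(\R_+))}$ and appeals to Theorem \ref{def:weak_formulation} without comment, so your observation that the final statement must be read in the fractional Besov scale is a genuine clarification rather than a gap in your argument.
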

\begin{proof}

Fix $h\in \R_+$. Since the fully discrete scheme, under the stability conditions of Proposition \ref{boundness_stability}, converges to the space-discrete scheme, then there is $k(h)$, satisfying the hypotheses of Proposition \ref{boundness_stability}, such that 
\begin{equation*}
    \sup_{t_n\in \R_+}\|s_{h,\cdot}(t_n)_{h,\cdot}^{k(h),n}-s_h\|_{H^1(\R_+)} \leq h. 
\end{equation*}
We can choose the function $k(h)$ to be increasing and such that $k(h)\rightarrow 0$ as $h \rightarrow 0$.  
With this in mind, by Theorem \ref{def:weak_formulation}, we get 
\begin{align*} 
\lim_{h \rightarrow +\infty}\|s-s_{h}^{k(h)}\|_{L^{\infty}([0,T],H^1(\R_+))} \le& \lim_{h \rightarrow +\infty} \|s-s_h\|_{L^{\infty}([0,T],H^1(\R_+))}\quad  + \\ 
&\lim_{h \rightarrow +\infty} \|s_h-s_h^{k(h)}\|_{L^{\infty}([0,T],H^1(\R_+))} \\
\le& \lim_{h \rightarrow +\infty} \left( \|s-s_h\|_{L^{\infty}([0,T],H^1(\R_+))} + h \right)
=0.
\end{align*}
\end{proof} 
\begin{remark}\label{Boundary_function_approximation}
We have already commented that the Pearson process \eqref{eq:Pearson_SDE_boundary} is an example of process with the regularity \eqref{eq:regularity_boundary_process}. In \cite{2024AMU_numerico} a  suitable numerical scheme for the boundary process $\psi$, known as LSST scheme, which combines a smooth truncation Euler-Maruyama   method and the Lamperti transform is proposed, coupled with  forward time centred space (FTCS)  approximation of the solution for $u$ and $v$. Positiveness, boundedness, and stability are stated. See also \cite{2021_chen_Gan_wang}. 
Furthermore in \cite{2021_chen_Gan_wang} the authors proved  the strong convergence of the explicit LSST scheme up to order one and that the scheme has a unique stationary distribution which converges to the stationary distribution of the SDE model.
\end{remark}

\begin{appendices}
\section{Feynman-Kac formula}\label{appendix:proof}

In this section we derive a Feyman-Kac formula for continuous time processes  taking values in a lattice with a time dependent generator endowed with a dynamical boundary condition. Since we were not able to find a reference describing precisely this result, and since the proof of this formula is easy due to the absence of the technical difficulties of the analogous diffusion case on $\R^n$, for the convenience of the reader we provide here a sketch of the proof . 

 Let us consider a generic linear space-discrete (finite range) PDE with a dynamic boundary condition, namely
 \begin{equation}\label{eq:discretegeneric1}
 \begin{split}
& \partial_t F(t,x)  + C^+(t,x) D^+_hF(t,x)- C^-(t,x) D^-_h F(t,x) - V(t,x) F(t,x) =0\\ 
&F(T,x) = F_0(x)\\
&F(t,0) = \phi(t)
 \end{split}
 \end{equation}
for every $x \in \Omega_h^{+}$, i.e. $x > 0$, and $t\in[0,T]$. \\

\begin{remark}
For a suitable choice of the coefficients $C^{\pm},V$ equation \eqref{eq:discretegeneric1} is the most generic linear equation with nearest neighbor interaction on one-dimensional lattice and with bounded coefficients. All the results of the present section can be easily generalized to more dimensions or even on graph with more complex topological structures or even when the non-local part does not involve only the first neighbors. In the last case, the boundary condition in \eqref{eq:discretegeneric1} should fix the value of the function $F$ not only on the boundary value but on the whole set of points outside the considered domain or, better, the points outside the original domain \textit{reachable } from it considering the non local part of the equation.
\end{remark}
$\phantom{\int}$

\begin{lemma}\label{lemma:discreteeq}
Let us suppose that $F_0 \in L^{\infty}(\Omega_h^{+})$ and that $\phi, C^{\pm}, V$ are (uniformly) bounded measurable functions. Then there exists a unique solution $F \in W^{1,\infty}([0,T],L^{\infty}(\Omega_h))$ to equation \eqref{eq:discretegeneric1}.
\end{lemma}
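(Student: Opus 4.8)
The plan is to exploit the fact that, for a fixed mesh size $h>0$, the spatial part of \eqref{eq:discretegeneric1} is a \emph{bounded} linear operator on the Banach space $E:=L^\infty(\Omega_h^+)$, so that \eqref{eq:discretegeneric1} reduces to a linear inhomogeneous ODE in $E$ with bounded measurable (operator-valued) coefficients, to which the classical Carath\'eodory/Picard theory applies directly; no semigroup generation theory, as would be needed in the continuum parabolic case, is required here.

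First I would recast the problem. Writing $u(t):=F(t,\cdot)|_{\Omega_h^+}\in E$ and using the prescribed boundary value $F(t,0)=\phi(t)$ to eliminate the ``ghost'' term $D^{-}_hF(t,h)=(F(t,h)-\phi(t))/h$ that appears at the leftmost interior node, the equation \eqref{eq:discretegeneric1} becomes
\[
\tfrac{d}{dt}u(t)=A(t)u(t)+f(t),\qquad u(T)=F_0,
\]
where $A(t)\in\mathcal{L}(E)$ is defined, with the convention that the value at the origin is set to zero, by $(A(t)w)(x)=-C^{+}(t,x)D^{+}_hw(x)+C^{-}(t,x)D^{-}_hw(x)+V(t,x)w(x)$, and $f(t)\in E$ is the forcing term carrying the boundary data, namely $f(t)(h)=-C^{-}(t,h)\phi(t)/h$ and $f(t)(x)=0$ for $x>h$. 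Since $\|D^{\pm}_h\|_{\mathcal{L}(E)}\le 2/h$ and $C^{\pm},V,\phi$ are uniformly bounded and measurable, the map $t\mapsto A(t)$ is measurable with $\sup_{t\in[0,T]}\|A(t)\|_{\mathcal{L}(E)}\le M$ for some $M=M(h)<\infty$, and $f\in L^\infty([0,T],E)$; moreover, since we may run all estimates pointwise in the space variable $x$, the usual measurability subtleties of $L^\infty$-valued Bochner integration are avoided.

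Next I would solve the equivalent backward integral equation $u(t)=F_0-\int_t^T\bigl[A(s)u(s)+f(s)\bigr]ds$ by a fixed-point argument: the map $\Phi$ sending $u\in C([0,T],E)$ to the right-hand side is, on any subinterval of length $\delta<1/M$, a contraction in the sup-norm, so the Banach fixed point theorem yields a unique solution on $[T-\delta,T]$, which is then propagated backward to $[T-2\delta,T-\delta]$, and so on, reaching $t=0$ after finitely many steps and producing a unique $u\in C([0,T],E)$. Because the integrand $s\mapsto A(s)u(s)+f(s)$ lies in $L^\infty([0,T],E)$, the solution $u$ is Lipschitz in time, hence $u\in W^{1,\infty}([0,T],E)$ and satisfies the differential equation for a.e.\ $t$; setting $F(t,x)=u(t)(x)$ on $\Omega_h^+$ and $F(t,0)=\phi(t)$ gives the asserted solution. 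Uniqueness is immediate from Gr\"onwall's inequality applied to the difference of two solutions, which satisfies $\|w(t)\|_E\le M\int_t^T\|w(s)\|_E\,ds$.

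I do not expect a genuine obstacle; the only point requiring care is the bookkeeping of the dynamic boundary condition as an inhomogeneous term — in particular, noticing that it perturbs the equation only at the single interior node $x=h$ — together with verifying the measurability and uniform (in $t$, for fixed $h$) boundedness of $t\mapsto A(t)$ that legitimize the Carath\'eodory/Picard scheme in $E$.
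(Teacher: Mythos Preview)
Your proposal is correct and follows essentially the same approach as the paper's proof: existence via a Picard/Banach fixed-point iteration in $L^\infty(\Omega_h^+)$ exploiting the uniform boundedness of the coefficients, and uniqueness via Gr\"onwall applied to the difference of two solutions. Your treatment is in fact more explicit than the paper's (which only sketches the argument and refers to Lemma~7.2 of Kurtz), particularly in the careful bookkeeping of the dynamic boundary datum as a forcing term supported at the single node $x=h$.
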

\begin{proof}
The proof can be done by a slight modification of the proof of Lemma 7.2 in \cite{Kurtzbook}.
The existence follows from a Picard iteration argument applied in the metric space of $L^{\infty}(\Omega_h^{+})$ functions $F$ satisfying the boundary conditions in \eqref{eq:discretegeneric1}. 
The uniqueness can be proved by applying Gronwall lemma to the difference of two solutions to equation \eqref{eq:discretegeneric1}.
In both parts of the proof it is essential to use the uniform boundedness of the coefficients $\phi,C^{\pm},V$.
\end{proof}

For generic coefficients $C^{\pm}$ and $V$ equation \eqref{eq:discretegeneric1} does not generate a construction on $L^{\infty}(\Omega_h^+)$ (a condition needed to prove  Proposition \ref{proposition:sbounds}). For this reason, in the rest of this section we suppose that $C^{\pm}$ and $V$ are greater or equal than $0$. Under this hypothesis, we consider the operator $L: W^{1,\infty}([0,T], L^{\infty}(\Omega_h)) \rightarrow L^{\infty}([0,T], L^{\infty}(\Omega_h))$ defined as 
\begin{equation}\label{eq:operatorL}
L(G)(t,x)=\partial_t G(t,x) + C^+(t,x) D^+_h(G)(t,x) - C^{-}(t,x) D^-_h(G)(t,x) 
\end{equation}
for any $t \in [0,T]$ and $x > 0$, and $L(G)(t,x) = \partial_t G(t,x)$ when $x \leq 0$. \\

For $(t,x) \in [0,T] \times \Omega_h$, let $X^{t,x}_{\cdot}: [t,T] \times \Omega  \rightarrow \Omega_h $ be a family of stochastic processes (parametrized by $(t,x)\in[0,T]\times \Omega_h$) such that $X^{t,x}_t=x$.\\

\begin{definition}[Martingale problem]
  We say that $X^{t,x}_{\cdot}$ solves the martingale problem associated to the above operator $L$ if for every $F \in W^{1,\infty}([0,T],L^{\infty}(\Omega_h))$ the process $\tilde{M}_s$ on $s\in [t,T]$ defined as
\[\tilde{M}_s= F(s, X^{x,t}_{ s}) -\int_0^{ s} L(F)( \sigma, X^{x,t}_{\sigma}) d\sigma \]
is a martingale. Furthermore, We say that the martingale problem associated with $L$ has a unique (in law) solution if, for any solution $X^{t,x}_s$ of the martingale problem associated with $L$, for any $(t,x) \in [0,T] \times \Omega_h$, the law of the process $X^{t,x}_{\cdot}$ does not depend on the particular solution to the martingale problem.   
\end{definition}

\begin{lemma}\label{lemma:FK}
Under the hypothesis that $C^{\pm}$ are non-negative and bounded functions,  there exists a unique solution to the martingale problem associated with the operator $L$ defined in \eqref{eq:operatorL}.
\end{lemma}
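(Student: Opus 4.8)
The plan is to construct the process $X^{t,x}_{\cdot}$ explicitly and then verify uniqueness in law, exploiting that on the lattice the operator $L$ generates a genuine (inhomogeneous) Markov jump process rather than a diffusion, so no stochastic-calculus machinery is needed. First I would observe that, for $x>0$, the non-local part of $L$ acts as
\[
C^+(t,x)D^+_h(G)(t,x) - C^-(t,x)D^-_h(G)(t,x) = \frac{C^+(t,x)}{h}\bigl(G(t,x+h)-G(t,x)\bigr) + \frac{C^-(t,x)}{h}\bigl(G(t,x-h)-G(t,x)\bigr),
\]
which, since $C^{\pm}\ge 0$, is exactly the generator of a pure-jump Markov process that, from state $x$ at time $t$, jumps to $x+h$ with rate $C^+(t,x)/h$ and to $x-h$ with rate $C^-(t,x)/h$, and is frozen (no jumps) once it reaches $\Omega_h\setminus\Omega_h^+=\{x\le 0\}$, consistently with $L(G)(t,x)=\partial_t G(t,x)$ there. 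Because $C^{\pm}$ are uniformly bounded, the total jump rate is bounded by $2\|C^+\|_\infty/h + 2\|C^-\|_\infty/h <\infty$, so the process is non-explosive and can be built pathwise by the standard recipe: exponential (time-inhomogeneous) holding times interlaced with the embedded jump chain, absorbed on $\{x\le 0\}$. This construction produces, for each $(t,x)$, a well-defined law on the Skorokhod space, and $X^{t,x}_t=x$ by construction.

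Next I would verify the martingale property. For $F\in W^{1,\infty}([0,T],L^\infty(\Omega_h))$, applying the (elementary) Dynkin formula for jump processes — which here is just the fundamental theorem of calculus applied between successive jump times, summed up — shows that
\[
\tilde M_s = F(s,X^{t,x}_s) - \int_0^s L(F)(\sigma,X^{t,x}_\sigma)\,d\sigma
\]
is a martingale; the boundedness of $F$ and of the jump rates guarantees integrability and that the local-martingale terms are true martingales. This handles existence.

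For uniqueness in law, I would argue as follows: given any solution $Y_\cdot$ of the martingale problem started at $(t,x)$, test against functions $F$ of the form $F(s,y)=\mathbf 1_{\{y=z\}}$ for fixed $z\in\Omega_h$ (these lie in $W^{1,\infty}([0,T],L^\infty(\Omega_h))$ since each slice is bounded and they are constant in time), to derive that the one-dimensional marginals $p_s(z):=\mathbb P(Y_s=z)$ satisfy the forward Kolmogorov (master) equation $\tfrac{d}{ds}p_s = (L^*p_s)$ with initial datum $\delta_x$; by Lemma~\ref{lemma:discreteeq} (applied to the adjoint, or directly by a Gronwall/Picard argument using boundedness of the rates) this linear ODE system on $\ell^\infty(\Omega_h)$ has a unique solution, so all marginals are determined. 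Iterating the same idea with the Markov property — which itself follows from the martingale formulation by conditioning $\tilde M$ on $\mathcal F_s$ — pins down all finite-dimensional distributions, hence the law. The main obstacle I anticipate is purely bookkeeping: making the Dynkin/Kolmogorov arguments rigorous on the infinite lattice $\Omega_h$ while only assuming $F_0\in L^\infty$ and bounded coefficients (no decay), and handling the absorbing boundary $\{x\le 0\}$ cleanly so that the frozen dynamics there is compatible with both the operator $L$ and the dynamic boundary data $\phi$; but since all rates are uniformly bounded and the process never explodes, these are routine once set up, and I would simply cite the analogous construction in \cite{Kurtzbook} (as in the proof of Lemma~\ref{lemma:discreteeq}) for the details.
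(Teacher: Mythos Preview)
Your proposal is correct and follows essentially the same approach as the paper: both recognize that, since $C^{\pm}\ge 0$ and bounded, the operator $L$ is the generator of a non-explosive nearest-neighbor jump process on $\Omega_h$ (absorbed on $\Omega_h\setminus\Omega_h^+$), and both ultimately appeal to \cite{Kurtzbook}. The paper is simply more terse: rather than sketching the holding-time construction and the Kolmogorov/Dynkin verification, it directly checks that $L$ has the integral form $\hat L(F)(t,x)=\partial_tF(t,x)+\int_{\Omega_h}(F(t,y)-F(t,x))\,\mu(t,x,dy)$ with the explicit kernel $\mu(t,x,dy)=\mathbb{I}_{(0,+\infty)}(x)\bigl(\tfrac{C^+(t,x)}{h}\delta_{x+h}+\tfrac{C^-(t,x)}{h}\delta_{x-h}\bigr)$ of uniformly bounded total mass, and then invokes Theorem~7.3 of \cite{Kurtzbook} for both existence and uniqueness.
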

\begin{proof}
We are going to use Theorem 7.3 of \cite{Kurtzbook}, where an analogous theorem is proved for the operator $\hat{L}$ of the form
\begin{equation}\label{eq:hatL}
\hat{L}(F)(t,x)=\partial_tF(t,x)+\int_{\Omega_h}(F(t,y)-F(t,x))\mu(t,x,d y)
\end{equation}
where $\mu$ is measurable kernel which is a positive measure with respect to the $y$ variable such that there is a measurable function $\gamma$ in $L^1([0,T])$ such that
\[\int_{\Omega_h} \mu(t,x, dy) \leq \gamma(t).\]
The operator $L$ given in equation \eqref{eq:operatorL} is of the form \eqref{eq:hatL} when the measure $\mu$ is chosen to be   
\[\mu(t,x,dy)=\mathbb{I}_{(x_0,+\infty)}(x) \left(\frac{C^{+}(t,x)}{h} \delta_{x+h}(dy)+ \frac{C^{-}(t,x)}{h} \delta_{x-h}(dy)\right), \]
which is a positive measure and satisfies the condition of Theorem 7.3 of \cite{Kurtzbook} in the particular case where $C^{\pm}$ are non-negative and uniformly bounded.
\end{proof}
We finally provide the stochastic representation formula.
\begin{lemma}[Feynman-Kac formula]\label{FKformula}
Suppose that $C^{\pm}$ and $V$ are non-negative and bounded functions and consider the stopping time $\tau^{x,t} \geq t$ which  is the hitting time of the process $X^{x,t}$ with respect to the set $\Omega_h \backslash \Omega_h^{+}$. Then the solution to equation \eqref{eq:discretegeneric1} has the following stochastic representation
\[F(t,x)= \mathbb{E}\left[e^{-\int_t^{\tau^{t,x}}V(\sigma, X^{x,t}_{\sigma}) d \sigma} \left( F_0(X^{x,t}_T) \mathbb{I}_{\tau^{x,t}> T} + \phi({\tau^{x,t}}) \mathbb{I}_{\tau^{t,x} \leq  T} \right) \right].\]
\end{lemma}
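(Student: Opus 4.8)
The plan is to establish the Feynman–Kac representation by verifying that the right-hand side defines a function that solves the PDE \eqref{eq:discretegeneric1}, and then invoking uniqueness from Lemma \ref{lemma:discreteeq}. First I would fix $(t,x)\in[0,T]\times\Omega_h$ and, using Lemma \ref{lemma:FK}, work with the (law-unique) solution $X^{t,x}_\cdot$ to the martingale problem associated with the operator $L$ in \eqref{eq:operatorL} and the stopping time $\tau^{t,x}$. Because the non-local part of $L$ only moves points by $\pm h$ and the boundary set $\Omega_h\setminus\Omega_h^+$ is reached in a single jump from $x=h$, the stopped process $X^{t,x}_{\cdot\wedge\tau^{t,x}}$ is well-defined and $X^{t,x}_{\tau^{t,x}}$ takes values in $\{0\}$ (or, more precisely, on the first lattice point outside the domain, which here is $0$), so that $\phi(\tau^{t,x})$ makes sense as the boundary datum evaluated at the exit time.

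The key step is an Itô/Dynkin-type computation. Let $F$ be the solution to \eqref{eq:discretegeneric1} provided by Lemma \ref{lemma:discreteeq}, and consider the process
\[
N_s := e^{-\int_t^{s\wedge\tau^{t,x}} V(\sigma,X^{t,x}_\sigma)\,d\sigma}\, F\bigl(s\wedge\tau^{t,x}, X^{t,x}_{s\wedge\tau^{t,x}}\bigr), \qquad s\in[t,T].
\]
Applying the martingale-problem property to the $W^{1,\infty}$-in-time function $F$ together with the product rule for the absolutely continuous multiplier $e^{-\int_t^{\cdot} V}$, one finds that the drift of $N_s$ is
\[
e^{-\int_t^{s\wedge\tau^{t,x}} V}\Bigl( \partial_t F + C^+ D^+_h F - C^- D^-_h F - V F\Bigr)\bigl(s\wedge\tau^{t,x},X^{t,x}_{s\wedge\tau^{t,x}}\bigr) = 0
\]
on $\{s<\tau^{t,x}\}$ by the PDE \eqref{eq:discretegeneric1}, and vanishes on $\{s\ge\tau^{t,x}\}$ because the integrand is frozen; hence $N_s$ is a martingale on $[t,T]$. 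Here the boundedness of $C^\pm$, $V$ and of $F$ (from Lemma \ref{lemma:discreteeq}) guarantees the required integrability, so that $N$ is a genuine martingale and not merely a local one. Equating $\mathbb{E}[N_t]=F(t,x)$ with $\mathbb{E}[N_T]$ and splitting on the events $\{\tau^{t,x}>T\}$ and $\{\tau^{t,x}\le T\}$, using the terminal condition $F(T,\cdot)=F_0$ on the first and the boundary condition $F(\tau^{t,x},0)=\phi(\tau^{t,x})$ on the second, yields exactly
\[
F(t,x)=\mathbb{E}\Bigl[e^{-\int_t^{\tau^{t,x}} V(\sigma,X^{t,x}_\sigma)\,d\sigma}\bigl(F_0(X^{t,x}_T)\mathbb{I}_{\tau^{t,x}>T}+\phi(\tau^{t,x})\mathbb{I}_{\tau^{t,x}\le T}\bigr)\Bigr].
\]

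The main obstacle I expect is a careful justification of the martingale property of $N_s$ up to the \emph{stopping} time $\tau^{t,x}$: the martingale problem as stated gives martingales of the form $F(s,X_s)-\int_0^s L(F)(\sigma,X_\sigma)d\sigma$ for fixed $F$, so one must (i) incorporate the exponential weight $e^{-\int_t^{\cdot} V}$, which requires either an integration-by-parts/product argument on piecewise-smooth processes or enlarging the operator to act on the pair $(F,\text{weight})$, and (ii) apply optional stopping at $\tau^{t,x}$, which is legitimate because $\tau^{t,x}$ is a.s. finite on the jump process and $N$ is uniformly bounded on $[t,T]$. A minor secondary point is to confirm that the convention $L(G)(t,x)=\partial_t G(t,x)$ for $x\le 0$ makes the stopped process behave correctly, i.e. that after hitting the boundary the process is effectively absorbed so that $F(\tau^{t,x},X^{t,x}_{\tau^{t,x}})=\phi(\tau^{t,x})$; this follows from the form of the kernel $\mu$ in the proof of Lemma \ref{lemma:FK}, which puts no mass outside $\Omega_h^+$ once $x\le 0$. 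Everything else is routine, and the uniqueness in Lemma \ref{lemma:discreteeq} closes the argument.
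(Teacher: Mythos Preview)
Your proposal is correct and follows essentially the same route as the paper: define the discounted, stopped process $N_s=e^{-\int_t^{s\wedge\tau}V}\,F(s\wedge\tau,X_{s\wedge\tau})$ for the solution $F$ of \eqref{eq:discretegeneric1}, use the product rule (equivalently, the It\^o formula for jump processes) together with the PDE to see that its drift vanishes, conclude from boundedness that $N$ is a genuine martingale, and equate $\mathbb{E}[N_t]=F(t,x)$ with $\mathbb{E}[N_T]$ split on $\{\tau>T\}$ and $\{\tau\le T\}$. One small correction: the claim that $\tau^{t,x}$ is a.s.\ finite is neither true in general (e.g.\ if $C^-\equiv 0$) nor needed, since uniform boundedness of $N$ on $[t,T]$ already suffices; also note that your opening line describes the converse strategy (show the right-hand side solves the PDE, then invoke uniqueness), whereas what you actually carry out---starting from $F$ and deriving the representation---is the paper's argument verbatim.
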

\begin{proof}
Consider the stochastic process 
\[M_{s}^{t,x}= e^{-\int_t^{\tau^{t,x}\wedge s}V(\sigma, X^{x,t}_{\sigma}) d \sigma} F(\tau^{x,t}\wedge s, X^{x,t}_{\tau^{x,t} \wedge s})\]
where $F$ is the unique solution to equation \eqref{eq:discretegeneric1} (which exists and it is unique thanks to Lemma \ref{lemma:discreteeq}). 
By Ito formula for stochastic processes with jumps (see Theorem 38.3 in \cite{Rogers_Williams_2}), we have 
\begin{align*}
d M_s^{t,x}=&- e^{-\int_t^{\tau^{t,x}\wedge s}V(\sigma, X^{x,t}_{\sigma})d \sigma  } F(\tau^{x,t}\wedge s, X^{x,t}_{\tau^{x,t} \wedge s})  V(\tau^{t,x}\wedge s, X^{x,t}_{\tau^{t,x}\wedge s}) d s\\
&+ e^{-\int_t^{\tau^{t,x}\wedge s}V(\sigma, X^{x,t}_{\sigma})d \sigma  }  L(F)(\tau^{x,t}\wedge s, X^{x,t}_{\tau^{x,t} \wedge s}) ds\\
&+ e^{-\int_t^{\tau^{t,x}\wedge s}V(\sigma, X^{x,t}_{\sigma})d \sigma  } d \tilde{M}_s\\
=&e^{-\int_t^{\tau^{t,x}\wedge s}V(\sigma, X^{x,t}_{\sigma})d \sigma  } d \tilde{M}_{s \wedge \tau^{x,t}}
\end{align*}
where 
\[\tilde{M}_s=F(s, X^{x,t}_{s}) - \int_0^s L(F)( \sigma, X^{x,t}_{ \sigma}) d \sigma,\]
and where we have extended $F$ to be $0$ when $x \leq 0$), and we have used the fact that $L(F)(t,x)-V(t,x) F(t,x)=0$ for every $x > 0$. Since $X^{t,x}_s$ solves the martingale problem associated with $L$, and $F$ is a bounded function, the process $\tilde{M}$ is a bounded martingale. Since $e^{-\int_t^{\tau^{t,x}\wedge s}V(\sigma, X^{x,t}_{\sigma})d \sigma  }$ is bounded, then $M_t$ in an $L^2$ martingale (it is actually an $L^p$ martingale for every $1\leq p< +\infty$). This means that 
\[F(t,x)=M^{t,x}_t =\mathbb{E}\left[ M_T^{t,x} \right] = \mathbb{E}\left[e^{-\int_t^{\tau^{t,x}}V(\sigma, X^{x,t}_{\sigma}) d \sigma} F(T \wedge \tau^{x,t}, X^{x,t}_{T\wedge \tau^{x,t}})\right].
\]
Using the definition of the stopping time $\tau^{x,t}$, the fact that $X_{\tau^{x,t}}^{x,t}=0$ when $\tau^{x,t}<T$ (this is due to the fact that the non-local part of equation \eqref{eq:discretegeneric1} is of nearest neighbor type) and the boundary condition in equation \eqref{eq:discretegeneric1}, we obtain the thesis.
\end{proof}

\end{appendices}

 \noindent{\bf Acknowledgements.} The research is carried out within the research project PON 2021(DM 1061, DM 1062) ``Deterministic and stochastic mathematical modelling and data analysis within the study for the indoor and outdoor impact of the climate and environmental changes for the degradation of the Cultural Heritage" of the Università degli Studi di Milano. The authors are members of GNAMPA (Gruppo Nazionale per l’Analisi Matematica, la Probabilità e le loro Applicazioni) of the Italian Istituto Nazionale di Alta Matematica (INdAM).

\bibliographystyle{plain}
\bibliography{BibliographyDegrado} 
\end{document}